\documentclass[leqno, 11pt, a4paper]{amsart}

\usepackage{hyperref}
\usepackage{amsmath}
\usepackage{amssymb, latexsym, slashed, amscd}
 \usepackage[all]{xy}
 \usepackage{amsthm}
\usepackage{amsfonts}
\usepackage{mathrsfs}
\usepackage{enumerate}
\usepackage{color}
  \usepackage{comment}
\usepackage{ascmac}

 \newtheorem{definition}{Definition}[section]
 \newtheorem{theorem}[definition]{Theorem}
 \newtheorem{lemma}[definition]{Lemma}
 \newtheorem{proposition}[definition]{Proposition}
 \newtheorem{corollary}[definition]{Corollary}

 \newtheorem*{theorem*}{Theorem}
\newtheorem*{proposition*}{Proposition}
\newtheorem*{lemma*}{Lemma}

 \theoremstyle{remark}
 \newtheorem{example}[definition]{Example}
 \newtheorem{remark}[definition]{Remark}



\newcommand{\op}[1]{\operatorname{#1}}



\newcommand{\Tr}{\ensuremath{\op{Tr}}}
\newcommand{\tr}{\op{tr}}

\newcommand{\Res}{\ensuremath{\op{Res}}}

\def\XXint#1#2#3{{\setbox0=\hbox{$#1{#2#3}{\int}$}
\vcenter{\hbox{$#2#3$}}\kern-.5\wd0}}

\newcommand{\Cl}{\op{Cl}}

\newcommand{\C}{\ensuremath{\mathbb{C}}} 
 
\newcommand{\N}{\ensuremath{\mathbb{N}}} 
 
\newcommand{\R}{\ensuremath{\mathbb{R}}} 
\newcommand{\T}{\ensuremath{\mathbb{T}}} 
\newcommand{\Z}{\ensuremath{\mathbb{Z}}}


\newcommand{\Ca}[1]{\ensuremath{\mathcal{#1}}}
\newcommand{\cA}{\Ca{A}}

\newcommand{\cE}{\Ca{E}}

\newcommand{\cH}{\ensuremath{\mathcal{H}}}

\newcommand{\cK}{\ensuremath{\mathcal{K}}}
\newcommand{\cL}{\ensuremath{\mathcal{L}}}

\newcommand{\cV}{\ensuremath{\mathcal{V}}}







\newcommand{\supp}{\op{supp}}

\newcommand{\Pic}{\op{Pic}}

\newcommand{\End}{\ensuremath{\op{End}}}

\newcommand{\rk}{\op{rk}}

\newcommand{\Vol}{\op{Vol}}

\newcommand{\ba}{\begin{eqnarray}}
   \newcommand{\na}{\end{eqnarray}}

\numberwithin{equation}{section}

\begin{document}

\title{Higher Nahm transform in non commutative geometry}

\dedicatory{Dedicated to Alain Connes on his $70$th  birthday}

 \author{Tsuyoshi Kato}
 \address{Department of Mathematics, Kyoto University, Japan}
 \email{tkato@math.kyoto-u.ac.jp}
 
 \author{Hirofumi Sasahira}
\address{Faculty of Mathematics, Kyushu University, Japan}
\email{hsasahira@math.kyushu-u.ac.jp}

 \author{Hang Wang}\thanks{H. Wang is supported by the Australian Research Council DE160100525}
 \address{School of Mathematical Sciences, East China Normal University, Shanghai, China}
 \address{School of Mathematical Sciences, University of Adelaide, Australia}
 \email{wanghang@math.ecnu.edu.cn\\ hang.wang01@adelaide.edu.au}


\maketitle

\section{Introduction}

An  anti self dual (ASD) connection minimizes the classical Yang-Mills 
 functional, whose minimal value is expressed in terms of the characteristic classes and
hence is a topological invariant, if an ASD connection exists.
 In non commutative geometry, notion of ASD  
does not make sense, since space does not appear  in general.
Connes reformulated the  Yang-Mills  functional 
over a compact four manifold
by using the Dixmier trace,
which reinterprets the  functional from a non commutative setting.
Connes-Yang-Mills action functional $CYM(A)$ is given by taking infimum value 
of the pre Connes-Yang-Mills action functional $\Tr_{\omega} (\nabla^2)$
in the equivalent class of non commutative connections $\nabla$
associated to $A$,
where $\Tr_{\omega}$ is the Dixmier trace.
It is not easy to formulate a non commutative Yang-Mills action functional 
without space,
even though the pre Connes-Yang-Mills action functional exists.
 Actually $CYM(A)$  is defined by using  the associated 
connections over the underlying four manifold, and its formulation is not 
straightforward from non commutative view point.

Let $\cA$ be a $*$-algebra, and $\mathcal{E}$ be a finitely generated projective
$\cA$ module. Our basic question is what kind of non commutative connections
over $\cE$ would possess `nice' properties.
Our idea is to use the Nahm transform between 
connections, which transforms an ASD connection to another ASD connection
 in the case of the $4$-dimensional  flat  torus.
Classical Nahm transform provides a kind of dual connection over
the Picard torus of covering group $\Gamma =\Z^4$ of the manifold $X$ (for example, $\Gamma=\pi_1(X)$ the fundemental group), 
and hence it concerns commutative property
of the group.
In this paper we generalize it in a non commutative way, by 
 constructing  a higher version of the 
 Connes-Yang-Mills action functional  by using 
 the Dixmier $\Gamma$-trace. 
As an output, we obtain  a finitely generated projective $C^*_r(\Gamma)$ module
 with a non commutative connection on it.
 
 Corresponding to the output in section $3$,
we quantize the group $C^*$-algebra using spectral triple and 
we need its smooth subalgebra to introduce  differential on the algebra. 
 On the other hand
we do not need to quantize the algebra on the input side in section $5$.
We actually need a Hilbert module
with a connection on it, and  does not require smooth subalgebra,
since four manifold as a space is assigned in the side. 
 
 Let $\cE$ be a Hilbert $C^*_r(\Gamma)$ module, where 
$\Gamma$ is a discrete group.
We introduce a Dixmier $\Gamma$-trace:
$$\Tr_{\omega}^{\Gamma} : \mathcal{L}^{(1, \infty)}(\cE) \to \mathbb C$$
and use it to formulate a  $\Gamma$-Connes-Yang-Mills action functional 
$CYM^{\Gamma}$.
Our motivation to construct it arose from a question how to 
formulate a higher  Nahm transform.
Let us explain it below in detail.

Let $(X,g)$ be a Riemannian 4-manifold, 
and $E \to X$ be a unitary bundle with a unitary connection $A$.
Consider $\tilde X$, a $\Gamma$-principal bundle of $X$ ($\tilde X/\Gamma=X$). A typical example is when $\tilde X$ is the universal cover of $X$ with fundamental group $\Gamma.$
Let $C^{\infty}(\Gamma)$ be a dense holomorphic closed subalgebra of $C^*_r(\Gamma)$ containing $\C\Gamma$. Then
consider the set of smooth sections
$\cE_0 = C^{\infty}(E \otimes \tilde{X} \times_{\Gamma} C^{\infty}(\Gamma))$
which is a right $C^{\infty}(\Gamma)$ module. 
 The connection
 $A$ naturally extends to a connection $\mathbb A
 $ on $\cE_0$.
In our case, the Hodge star $*$ also acts on $\Omega^2(\cE_0)$,
and hence the notion of anti-self duality on a connection makes sense.
We induce an equivalence between a minimizer 
for  $\mathbb A$ of  the $\Gamma$-Connes-Yang-Mills action functional and 
the  ASD condition. 
 This follows from the next coincidence
  combined with  theorem $14$ in \cite{ConnesYM}:
 \begin{theorem}\label{abelian}
 Let $X$ be a closed, oriented, spin, smooth $4$-manifold with $b_1=4$. Let $\Gamma=\Z^4$ be a covering group of $X.$  
 Then the Connes-Yang-Mills functional and its higher analogue coincide:
 $$CYM(A) = CYM^{\Gamma}(\mathbb A).$$
 \end{theorem}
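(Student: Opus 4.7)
The strategy is to reduce the higher functional to the classical one by exploiting Pontryagin duality for $\Gamma = \Z^4$. First, I identify $C^*_r(\Gamma) \cong C(\hat\Gamma) = C(\T^4)$ via the Fourier transform; under the hypothesis $b_1=4$, the dual torus $\T^4$ is naturally the Picard torus $\op{Pic}^0(X)$, whose points parametrize flat unitary line bundles $L_\xi \to X$. This lets me view the associated bundle $\tilde X \times_\Gamma C^*_r(\Gamma) \to X$ as the push-forward along the projection $X \times \T^4 \to X$ of the Poincar\'e line bundle $\cP \to X \times \T^4$, so that the right $C^\infty(\Gamma)$-module $\cE_0$ becomes the space of smooth sections of $E \boxtimes \cP$, regarded as a $\T^4$-family of modules on $X$.

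Next, I check that the extended connection $\mathbb A$ on $\cE_0$ corresponds, under this identification, to the smooth family $\{A_\xi\}_{\xi \in \T^4}$ with $A_\xi = A \otimes \nabla^{L_\xi}$, the tensor product of $A$ with the flat unitary connection on $L_\xi$. Because every $L_\xi$ is flat, the curvatures satisfy $F_{A_\xi} = F_A \otimes \op{id}_{L_\xi}$, so pointwise in $\xi$ the squared curvature of $A_\xi$ is unitarily equivalent to that of $A$. The same correspondence matches non-commutative connections on $\cE_0$ equivalent to $\mathbb A$ with $\T^4$-families of connections on $E$ pointwise equivalent to $A$ in the sense of \cite{ConnesYM}.

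Third, I establish the direct-integral formula for the Dixmier $\Gamma$-trace. For $\Gamma$ abelian the $\Gamma$-equivariant semifinite von Neumann algebra reduces, via Fourier transform, to a subalgebra of $L^\infty(\hat\Gamma) \bar{\otimes} \cL(H)$ carrying the canonical trace $\int_{\hat\Gamma} \Tr(\cdot)\, d\xi$. On the Dixmier ideal this should yield
$$ \Tr_\omega^\Gamma(T) \;=\; \int_{\T^4} \Tr_\omega(T_\xi)\, d\xi, $$
where $T_\xi$ denotes the fiber of $T$ at $\xi$. Applied to $T = \nabla_{\mathbb A}^2$ and with $\op{vol}(\T^4)=1$, this yields $\Tr_\omega^\Gamma(\nabla_{\mathbb A}^2) = \Tr_\omega(F_A^2)$, and the same identity holds for any equivalent non-commutative connection on $\cE_0$.

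Finally, taking infima on both sides over the equivalence class, and invoking Theorem $14$ of \cite{ConnesYM} to recognize both as the classical Yang-Mills action of $A$, delivers $CYM^\Gamma(\mathbb A) = CYM(A)$. The main obstacle I anticipate is the third step: verifying rigorously that the Dixmier $\Gamma$-trace built from the spectral triple on $C^*_r(\Gamma)$ coincides with the direct-integral formula above, and that membership in the $\mathcal{L}^{(1,\infty)}$ ideal is compatible with fiberwise membership on each $E \otimes L_\xi$. Once this decomposition is available, matching equivalence classes and infima is essentially formal.
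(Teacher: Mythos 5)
Your plan shares the paper's central observation — Pontryagin duality $C^*_r(\Z^4)\cong C(\hat{\T}^4)$, the fiberwise picture over the Picard torus, and flatness of each $L_\xi$ leaving the curvature unchanged — but you organize the argument in the opposite order. You propose to compare Dixmier traces \emph{fiberwise} before passing to infima, whereas the paper first establishes a $\Gamma$-version of Connes' trace theorem (Proposition~\ref{prop:gammaConnesTr}), $\Tr_{\omega}^{\Gamma}(P)=\frac{1}{n}\Res_{\Gamma}(P)$, then uses the antisymmetrization/orthogonal-decomposition argument of Lemma~\ref{lem:Acdc} to compute $\inf_{c(\mathbb{A})=\mathbb{A}_c}\Tr_{\omega}^{\Gamma}(\Theta^2)$ explicitly as a classical integral (Theorem~\ref{thm:inf}), and only at that stage compares two \emph{classical} Yang--Mills integrals by the Fourier/averaging argument you also use (Theorem~\ref{thm:coincidence}). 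Your route, if made rigorous, would subsume Theorem~\ref{thm:inf} as a fiberwise application of Connes' Theorem 14, which is conceptually economical — but it pushes all the difficulty into the direct-integral identity you yourself flag as the obstacle.

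That obstacle is real and cannot be waved through. The formula $\Tr_{\omega}^{\Gamma}(T)=\int_{\hat{\T}^4}\Tr_{\omega}(T_\xi)\,d\xi$ should not be expected for arbitrary $T\in\cL^{(1,\infty)}(\cE)$: the generalised singular number is defined via the \emph{integrated} distribution function, $\tr_{\Gamma}\bigl(\chi_{(s,\infty)}(T)\bigr)=\int_{\hat\Gamma}\Tr\bigl(\chi_{(s,\infty)}(T_\xi)\bigr)d\xi$, which is not the same as integrating the $\mu_t(T_\xi)$, and for a non-measurable $T$ the state $\omega$-limit need not commute with the $\xi$-integration. What rescues your argument is that the operator you actually need is $\Theta^2$, a pseudo-differential operator of order $-4$ (note: the \emph{square} of the q-curvature — your notation $\nabla_{\mathbb{A}}^2$, and the expression $\Tr_{\omega}(F_A^2)$, blur $\Theta$ with $\Theta^2$). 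Such an operator is measurable, both sides reduce to $\frac{1}{n}$ times a Wodzicki residue, and the $\Gamma$-residue visibly decomposes as $\int_{\hat\Gamma}\Res(P_\xi)\,d\xi$; this is in effect what the paper's Proposition~\ref{prop:gammaConnesTr} supplies. So the fix is to replace your general direct-integral claim by the residue identity restricted to the critical-order $\Psi$DO, and to add one sentence on the infimum exchange: since the classical data are $\xi$-independent, a fiberwise-minimizing family may be chosen constant in $\xi$, giving $\inf_{\mathbb{A}}\int_{\hat\Gamma}\le\int_{\hat\Gamma}\inf_{A_\xi}$, while the reverse inequality is automatic. With these two repairs your route arrives at the same conclusion as the paper.
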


  Nahm transform is roughly described as below.
 Suppose $X$ is spin and $D_A$ is the 
 Dirac operator 
  on $S \otimes E : = S_E$, where $S$ is the spinor bundle on $X$.
 ind $D_A$,  the index
 of the twisted Dirac operator $D_A$ over 
 $S_E \otimes \tilde{X} \times_{\Gamma} C^*_r(\Gamma)$
 consists of a formal difference of 
  finitely generated projective $C^*_r(\Gamma)$ modules,
  and is called the {\em higher index} in non commutative geometry.
 
 For $\Gamma = \Z^m$, the twisted Dirac operator $D_A$
 is regarded as a  family of Dirac operators over the Picard torus
 as in the classical case.
 Suppose ker $D_A =0$ at each parameter value,
 and hence ind $D_A =  -[$ coker $D_A]$ holds.
 In the case
  coker $D_A \subset \cH^- : = L^2(S^-_E \otimes \tilde{X} \times_{\Gamma} C^*_r(\Gamma))$
 forms a vector bundle over the Picard torus.
Take the connection $\hat{d}_A$
given by the trivial connection composed 
with the orthogonal projection onto  coker $D_A$.
The assignment:
$$\coprod_{\rho \in \Pic} \
(E \otimes L_{\rho},A) \to ( \text{ coker } D_A, \hat{d}_A)$$
is called the  Nahm transform.
So it assigns the higher index with the induced connection.
The following is known:
\begin{lemma}\label{cNT}
 Let $X$ be a closed, oriented, spin, smooth $4$-manifold with $b_1=4$. Then the Nahm transform assigns an ASD connection to another ASD connection.
\end{lemma}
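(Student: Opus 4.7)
The plan is to adapt the classical Braam--van Baal proof of ASD-preservation under the Nahm transform on the flat $4$-torus, which applies here because $b_1(X)=4$ makes $\Pic(X)=\widehat\Gamma$ a $4$-torus and so the ASD condition is meaningful on both sides. First I fix a basis $\alpha_1,\ldots,\alpha_4$ of harmonic $1$-forms representing a basis of $H^1(X;\R)$ and use them to coordinatize $\widehat\Gamma$ by $\rho=e^{i\xi}$, so that $L_\rho$ carries the connection $d+i\sum_j\xi_j\alpha_j$ and consequently $\partial_{\xi_j}D_{A,\rho}=ic(\alpha_j)$. The hypothesis $\ker D_{A,\rho}=0$ gives the smooth cokernel bundle $\widehat E\to\widehat\Gamma$, and the Nahm connection is $\widehat d_A=P\circ d$ with $P_\rho$ the orthogonal projection onto $\coker D_{A,\rho}$.

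Next I compute the curvature using the Green's operator $G_\rho=(D_{A,\rho}D_{A,\rho}^*)^{-1}$ on $(\coker D_{A,\rho})^\perp$, so that $I-P_\rho=D_{A,\rho}G_\rho D_{A,\rho}^*$. Differentiating twice in $\xi$ and collecting the projection-preserving terms yields
\begin{equation*}
\widehat F_{jk}\;=\;P_\rho\bigl(c(\alpha_j)\,G_\rho\,c(\alpha_k)-c(\alpha_k)\,G_\rho\,c(\alpha_j)\bigr)P_\rho
\end{equation*}
as an $\End(\widehat E_\rho)$-valued $2$-form on $\widehat\Gamma$. The ASD condition $\widehat F^+=0$ then reduces to verifying $\widehat F_{jk}+\tfrac12\varepsilon_{jk}{}^{\ell m}\widehat F_{\ell m}=0$, and the key algebraic input is that Clifford multiplication by a self-dual $2$-form annihilates $S^-$: once $G_\rho$ is commuted to one side, the leading contribution to $\widehat F^+$ takes the form $P_\rho\,c(\alpha_j\wedge\alpha_k+*(\alpha_j\wedge\alpha_k))\,G_\rho P_\rho$, whose Clifford argument is self-dual and therefore vanishes on $S^-$.

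The main obstacle I anticipate is that on $X\neq T^4$ the harmonic forms $\alpha_j$ are not parallel, so commuting $G_\rho$ past Clifford multiplication produces correction terms involving $\nabla\alpha_j$ together with the Weitzenböck potential. To absorb these I would use the Weitzenböck identity $D_AD_A^*=\nabla_A^*\nabla_A+s/4+c(F_A^-)$ on $S^-_E$: because $A$ is ASD only $\Lambda^2_-(X)$-curvature survives, and a Neumann expansion of $G_\rho$ about this operator generates corrections of algebraic type $\Lambda^2_-$, which again lie in the kernel of Clifford multiplication by $\Lambda^2_+(\widehat\Gamma)$. An alternative, more conceptual route is to recast the whole computation via the Bismut superconnection, where ASD-preservation drops out of the local family index theorem for $D_A$ over $\widehat\Gamma$; this bypasses the algebraic bookkeeping at the cost of a heavier analytic setup.
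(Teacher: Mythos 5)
Your route --- the Braam--van Baal Green's-operator computation --- is genuinely different from the paper's. The paper's proof (Section 2.3) is the Donaldson--Kronheimer holomorphic argument: fix any compatible complex structure $\tilde J$ on $V = H^1(X;\R)$, show (citing DK \S3.1.3 and an infinite-dimensional version of DK Lemma 3.1.20) that the pulled-back bundle $\tilde E \to V$ carries a natural holomorphic structure for which the Nahm connection $\tilde A$ is compatible, conclude $F_{\tilde A}\in\Lambda^{1,1}_{\tilde J}\otimes\mathfrak g_{\tilde E}$, and then invoke Proposition 2.3 (ASD $\Leftrightarrow$ $F$ is $(1,1)$ for every compatible $J$). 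That route never tracks curvature terms explicitly and, crucially, never has to confront the non-parallelism of the harmonic $1$-forms on $X$.

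Your direct computation, however, has a gap that the Weitzenb\"ock step does not close. The algebraic identity you rely on --- Clifford multiplication by a self-dual $2$-form annihilates $S^-$ --- is a \emph{pointwise} statement about $\Lambda^2_+(T^*_xX)$. The self-dual projection entering $\widehat F^+$ produces the combination $\alpha_j\wedge\alpha_k + \tfrac12\varepsilon_{jk}{}^{\ell m}\alpha_\ell\wedge\alpha_m$, where $\varepsilon$ is the Levi--Civita tensor of the flat $L^2$-metric on $\widehat\Gamma$. On $T^4$ one may take $\alpha_j = dx_j$, which are pointwise orthonormal and parallel, so this combination agrees at every point with $\alpha_j\wedge\alpha_k + *_X(\alpha_j\wedge\alpha_k)\in\Lambda^2_+(X)$ and the Clifford argument applies. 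On a general $X$ with $b_1=4$ the harmonic $1$-forms are only $L^2$-orthonormal, not pointwise orthonormal, so $\tfrac12\varepsilon_{jk}{}^{\ell m}\,\alpha_\ell\wedge\alpha_m \neq *_X(\alpha_j\wedge\alpha_k)$ at a given $x\in X$, and the relevant $2$-form need not lie in $\Lambda^2_+(T^*_xX)$ anywhere. Your Neumann expansion of $G_\rho$ via the Weitzenb\"ock identity only accounts for $[G_\rho,c(\alpha_j)]\neq 0$; it does nothing to reconcile the $L^2$-geometry of $H^1(X;\R)$ with the pointwise Riemannian geometry of $X$, which is the actual obstruction. (Also, minor: you want $G_\rho=(D_{A,\rho}^*D_{A,\rho})^{-1}$ acting on $L^2(S^+\otimes E)$ so that $I-P_\rho=D_{A,\rho}G_\rho D_{A,\rho}^*$ typechecks.) The holomorphic argument in the paper is precisely the device that sidesteps this pointwise-versus-$L^2$ tension, and a direct computation would need a genuinely new mechanism to handle it.
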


The higher index exists for a general discrete group, 
where we will have no reasonable parameter space,
while  $C^*$-algebra module exists.
In the absence of  the underlying space, we have to use
a non commutative connection to formulate higher Nahm transform.
With a spectral triple, 
there is a  connection on a Hilbert $ C^*_r(\Gamma)$ module,
which is induced from the trivial one.
By using the orthogonal projection onto  coker $D_A$, 
we obtain a non commutative  connection 
 $\hat{d}_A$ as the induced connection on 
 coker $D_A$.
So we have given a higher Nahm transform
from a bundle with a connection  $(E,A)$
to a finitely generated projective $C^*_r(\Gamma)$ 
module $ \text{coker } D_A$ with a non commutative connection $ \hat{d}_A$:
 $$(E,A) \to (E \otimes  \tilde{X} \times_{\Gamma} C^*_r(\Gamma)
 ,A) \to ( \text{ coker } D_A, \hat{d}_A).$$

Lemma \ref{cNT} can be reinterpreted as below in terms our formulation:
 \begin{corollary}
 Let $X$ be a closed, oriented, spin, smooth $4$-manifold with $b_1=4$. Let $\Gamma=\Z^4$ be a covering group of $X.$  

The higher Nahm transform sends the minimizer of the 
higher Connes-Yang-Mills functional  to the minimiser 
of the Connes-Yang-Mills functional.
\end{corollary}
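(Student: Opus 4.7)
The plan is to assemble the statement from three ingredients already available: Theorem \ref{abelian}, Lemma \ref{cNT}, and Connes' characterisation (Theorem $14$ of \cite{ConnesYM}) of minimisers of $CYM$ as ASD connections. Starting from a minimiser $\mathbb A$ of $CYM^{\Gamma}$, Theorem \ref{abelian} gives $CYM(A)=CYM^{\Gamma}(\mathbb A)$, so the underlying $A$ is a minimiser of $CYM$ on $X$, and Connes' theorem then yields that $A$ is anti-self-dual with respect to $g$.

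Next I would exploit the fact that $\Gamma=\Z^4$ is abelian. By Pontryagin duality one has a $C^*$-algebra isomorphism $C^*_r(\Gamma)\cong C(\hat T^4)$, where $\hat T^4$ is the dual $4$-torus and, for $X$ with $b_1=4$, may be identified with the Picard torus $\Pic$. Under this isomorphism a finitely generated projective $C^*_r(\Gamma)$ module corresponds to a complex vector bundle on $\hat T^4$. The crucial step is to verify that, under this identification, the module $\coker D_A$ together with the intrinsic non-commutative connection $\hat d_A$ corresponds precisely to the classical Nahm-transformed bundle with its Nahm connection. Since the fibre of $\coker D_A$ over $\rho\in\Pic$ is $\coker D_{A\otimes L_\rho}$, and the non-commutative projection restricts fibrewise to the orthogonal projection onto this kernel, the comparison reduces to the standard description of the Nahm connection as the trivial family connection composed with this projection.

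Once this identification is in place, Lemma \ref{cNT} applied to the ASD connection $A$ implies that $\hat d_A$, viewed as a connection on a bundle over $\hat T^4$, is again ASD. Applying Connes' Theorem $14$ on the dual side then gives that $\hat d_A$ is a minimiser of the classical Connes--Yang--Mills functional, which is the content of the corollary.

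The main obstacle is the comparison carried out in the second paragraph: one must carefully match the non-commutative data (a $C^*_r(\Gamma)$ module equipped with the connection built from a spectral-triple-style trivial connection on $\tilde X\times_{\Gamma}C^*_r(\Gamma)$) with the classical data (the family $\{\coker D_{A\otimes L_\rho}\}_{\rho\in\Pic}$ carrying the induced $L^2$-orthogonal projection connection). This is a Fourier-theoretic check that goes through only because $\Gamma$ is abelian, and it is where the essential content of the argument lies; everything else is routine assembly of results already stated.
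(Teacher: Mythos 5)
Your proposal is correct and follows the same route the paper takes: combine Theorem \ref{abelian} (equivalently the corollary $CYM(A_c)=CYM^{\Gamma}(\mathbb A_c)$, so minimisers of $CYM^{\Gamma}$ are ASD by Connes' Theorem~14), Lemma \ref{cNT} (the classical Nahm transform sends ASD to ASD), and the Pontryagin/Fourier identification of $C^*_r(\Z^4)$-modules with bundles over $\hat T^4$, which is the implicit ingredient the paper uses to match the higher Nahm transform $(\coker D_A,\hat d_A)$ with the classical one $(\hat E,\hat A)$. The paper simply states the corollary as ``Lemma \ref{cNT} gives the following,'' leaving this chain implicit; your proposal spells out exactly the same chain, with the Fourier-theoretic comparison in the second paragraph being the step the paper relies on from its Section 3 discussion.
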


There arise two questions:

\vspace{2mm}

{\em Question:}

\vspace{2mm}

$(1)$
Let $X$ be a compact four manifold
with $\pi_1(X) = \Gamma$, and $E \to X$ be a unitary bundle.
Consider a number:
$$\tau_{\Gamma} (X) : = \inf_{A: \text{ ASD}}
 \ \Tr_{\omega}^{\Gamma}(\nabla_{\hat{d}_A}^2)  
\ \in \  \R.$$
When is $\tau_{\Gamma} (X)$  a topological number ?

\vspace{2mm}

$(2)$ Compare it with another number:
$$\tau_{\Gamma}' (X) : = \inf_{A} 
 \ \Tr_{\omega}^{\Gamma}(\nabla_{\hat{d}_A}^2)  
\ \in \  \R.$$
A pri-ori the inequality holds:
$$\tau_{\Gamma}' (X) \leq \tau_{\Gamma} (X)$$
if an ASD connection exists.
When  the equality holds ?

\vspace{2mm}

In the case of the $4$-dimensional
flat torus, we know both the answers affirmatively.

\section{Nahm transform}
In this section we quickly review some basic things on the Nahm transform.

\subsection{Complex surface}

Let $X$ be a 4-manifild with Riemannian metric $g$.  Suppose that $X$ has a complex structure $J$ compatible with $g$ and define a $(1,1)$-form $\omega$ by
\[
        \omega(a, b) = g(a, Jb). 
\]
We write   $\Lambda^i_X $,   $\Lambda^{i, j}_X$ for  $\Lambda^i T^*X$,  $\Lambda^{i, j} (T^* X \otimes \C)$.  Then we have the decompositions
\[
    \Lambda^2_X \otimes \C =  \Lambda^{2, 0}_X  \oplus \Lambda_X^{1, 1} \oplus  \Lambda_X^{0, 2}. 
\]
and
\[
   \Lambda^{1, 1}_{X} = (\Lambda^0_X \otimes \C) \cdot \omega  \oplus \Lambda^{1, 1}_{X, 0}. 
\]
Here $\Lambda_{X, 0}^{1, 1}$ is the orthogonal complement of $\omega$ in $\Lambda^{1,1}_X$. 
We also have
\begin{equation}   \label{eq decomp Lambda}
   \begin{split}
   & \Lambda^+_{X} \otimes \C =  
       \Lambda^{2, 0}_{X} \oplus \Lambda^{0,0}_{X}  \cdot \omega \oplus \Lambda^{0, 2}_X,  \\
  & \Lambda^{-}_{X} \otimes \C = \Lambda^{1,1}_{X, 0}.
   \end{split}
\end{equation}
Here $\Lambda^{+}_X$ is the self-dual part of $\Lambda^2_X$ and $\Lambda^{-}_X$ is the anti-self dual  of $\Lambda^2_X$.

\begin{proposition}
Let $A$ be a unitary connection on a Hermitian complex vector bundle $E$ on $X$. Then $A$ is ASD if and only if $F_A$ is $(1, 1)$-type and $(F_A, \omega) = 0$ at each point.   
\end{proposition}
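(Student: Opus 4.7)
The plan is to read the equivalence directly off the type decomposition \eqref{eq decomp Lambda}. Recall that $A$ is ASD precisely when $F_A^+=0$ pointwise, where $F_A^+$ denotes the orthogonal projection of $F_A\in\Omega^2(X,\End E)$ onto the self-dual summand. Since \eqref{eq decomp Lambda} is stated for the complexified bundles $\Lambda^\pm_X\otimes\C$, I would tensor $F_A$ with $\C$ and work componentwise in the $\End E\otimes\C$ fibre; the decomposition applied tensor-wise extends to an orthogonal decomposition of $\Omega^\pm(X,\End E)\otimes\C$ into the three corresponding summands, so the vanishing of $F_A^+$ can be tested on each summand independently.

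Next I would split $F_A$ by bidegree as $F_A=F_A^{2,0}+F_A^{1,1}+F_A^{0,2}$, and further decompose the $(1,1)$-part as $F_A^{1,1}=\lambda\cdot\omega+F_A^{1,1,\perp}$, where $F_A^{1,1,\perp}$ takes values in $\Lambda^{1,1}_{X,0}\otimes\End E$ and $\lambda$ is an $\End E\otimes\C$-valued function given pointwise by $\lambda=(F_A,\omega)/|\omega|^2$. In particular, $\lambda\equiv 0$ at each point is exactly the condition $(F_A,\omega)=0$ at each point. Reading off \eqref{eq decomp Lambda} then yields
\[
   F_A^+ \;=\; F_A^{2,0}+\lambda\cdot\omega+F_A^{0,2},
   \qquad F_A^- \;=\; F_A^{1,1,\perp}.
\]

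Hence $F_A^+=0$ if and only if each of the three summands $F_A^{2,0}$, $F_A^{0,2}$ and $\lambda\cdot\omega$ vanishes everywhere on $X$. The first two vanishings say exactly that $F_A$ is of type $(1,1)$ (and since $F_A$ is $\mathfrak{u}(E)$-valued these two conditions are equivalent via complex conjugation), while the third is equivalent to $(F_A,\omega)=0$. Combining these equivalences gives the proposition. I do not expect any genuine obstacle: the only care required is to respect the $\End E$-valuedness and to observe that \eqref{eq decomp Lambda} is an orthogonal decomposition, so the projection onto each summand is unambiguous. The remainder is essentially bookkeeping against \eqref{eq decomp Lambda}.
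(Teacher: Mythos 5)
Your proof is correct and matches the paper's (implicit) intent: the paper states the proposition immediately after the type decomposition \eqref{eq decomp Lambda} and gives no proof, precisely because the argument is the one you carried out -- reading off $F_A^+ = F_A^{2,0} + \lambda\omega + F_A^{0,2}$ from the decomposition and noting its three summands vanish exactly when $F_A$ is of type $(1,1)$ and $(F_A,\omega)=0$.
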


Note that $F_A$ is $(1, 1)$-type if and only if $\bar{\partial}_A$ defines a holomorphic structure on $E$. 
So we have the following:

\begin{proposition}
Let $A$ be a unitary connection on a Hermitian complex vector bundle $E$ and suppose that $\bar{\partial}_A$ defines a holomorphic structure on $E$.  Then $A$ is ASD if and only if $(F_A, \omega) = 0$ at each point of $X$. 
\end{proposition}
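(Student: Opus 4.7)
The plan is to derive the proposition directly from the previous proposition by using the remark immediately preceding it, which asserts the equivalence
\[
   \bar{\partial}_A \text{ defines a holomorphic structure on } E \iff F_A \text{ is of type } (1,1).
\]
Given this equivalence, the hypothesis that $\bar{\partial}_A$ defines a holomorphic structure automatically supplies one of the two conditions appearing in the preceding proposition, reducing the ASD criterion to the single pointwise condition $(F_A,\omega)=0$.

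Concretely, I would proceed in two short steps. First, I would justify the equivalence in the remark: since $A$ is unitary, $F_A$ is skew-Hermitian, so its $(2,0)$ and $(0,2)$ components are adjoints of one another up to sign; hence $F_A$ is of type $(1,1)$ is equivalent to $F_A^{0,2}=0$. The operator $\bar\partial_A$ extends naturally to $E$-valued forms, and its square equals $F_A^{0,2}$ acting by exterior multiplication. Therefore $\bar\partial_A^{\,2}=0$ iff $F_A^{0,2}=0$, and by the Koszul--Malgrange theorem this integrability condition is precisely the condition that $\bar\partial_A$ endow $E$ with a holomorphic structure compatible with the unitary structure. Second, I would invoke the previous proposition: $A$ is ASD iff $F_A$ is of type $(1,1)$ and $(F_A,\omega)=0$ pointwise. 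Under our hypothesis the first condition holds automatically, so ASD-ness is equivalent to $(F_A,\omega)=0$ at each point of $X$.

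There is essentially no obstacle here, as the statement is a direct corollary of the preceding proposition together with the Koszul--Malgrange integrability theorem. The only subtlety worth making explicit is the unitarity of $A$, which is what couples the vanishing of $F_A^{0,2}$ to the vanishing of $F_A^{2,0}$ and thereby gives a genuine $(1,1)$-type curvature from a one-sided integrability assumption; without unitarity one would only obtain $F_A\in \Lambda^{1,1}\oplus\Lambda^{2,0}$.
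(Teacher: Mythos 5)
Your proof is correct and follows exactly the route the paper intends: the proposition is stated as an immediate consequence of the preceding proposition together with the remark that $F_A$ being of type $(1,1)$ is equivalent to $\bar\partial_A$ defining a holomorphic structure. The extra justification you supply for that remark (unitarity coupling $F_A^{0,2}$ and $F_A^{2,0}$, plus Koszul--Malgrange) is sound and fills in detail the paper leaves implicit, but the argument is the same.
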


We will consider the ASD equation on  $\R^4$ with the standard Rimeannian metric. The complex structures $J$ on $\R^4$ compatible with the Riemannian structure are parametrized by $SO(4)/U(2) ( \cong S^2)$.  It is easy to see that

\begin{equation*} \label{eq Lambda^-}
         \Lambda^{-} \otimes \C =  \bigcap_{J \in S^2}  \Lambda^{1,1}_{J }.
\end{equation*}
 
Hence we have

\begin{proposition}  \label{prop (1,1) type}
Let $E$ be a  Hermitian complex vector bundle on $\R^4$.  A unitary  connection $A$ on $E$ is ASD if and only if $F_A$ is $(1, 1)$ type for all complex structures $J$ compatible with  the Riemannian metric. 
\end{proposition}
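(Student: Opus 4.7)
The proposition follows directly from the identity
$\Lambda^{-} \otimes \C = \bigcap_{J \in S^{2}} \Lambda^{1,1}_{J}$
already displayed just above it, applied pointwise to the curvature $F_A$. The plan is to split into the two implications and invoke this identity in each direction.

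For the forward direction, I would assume $A$ is ASD, so that $F_A$ takes values in $\Lambda^{-} \otimes \op{End}(E)$ at every point of $\R^4$. By the displayed identity (or directly from \eqref{eq decomp Lambda}, since $\Lambda^{-} \otimes \C = \Lambda^{1,1}_{J, 0} \subset \Lambda^{1,1}_J$ for each compatible $J$), it follows that $F_A$ is of type $(1,1)$ with respect to every compatible complex structure $J \in S^2$.

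For the converse, suppose $F_A$ is of type $(1,1)$ with respect to every compatible $J$. Then at each point $F_A$ lies in $\bigcap_{J \in S^{2}} \Lambda^{1,1}_J \otimes \op{End}(E)$, which equals $\Lambda^{-} \otimes \op{End}(E)$ by the identity. Hence $F_A$ is anti-self-dual, so $A$ is ASD.

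The one ingredient that deserves justification is the identity $\Lambda^{-} \otimes \C = \bigcap_{J} \Lambda^{1,1}_J$ itself, which the text declares ``easy to see''. The inclusion $\subseteq$ is immediate from \eqref{eq decomp Lambda}. For the reverse inclusion, the main obstacle is ruling out a nonzero self-dual contribution: if $\alpha \in \Lambda^{+} \otimes \C$ is of type $(1,1)$ for every compatible $J$, then the decomposition $\Lambda^{+} \otimes \C = \Lambda^{2,0}_J \oplus \C \cdot \omega_J \oplus \Lambda^{0,2}_J$ forces $\alpha$ to be a scalar multiple of $\omega_J$ for each such $J$. Since the vectors $\{\omega_J : J \in S^2\}$ sweep out all of $\Lambda^{+}$ (via the identification of $S^2 \cong SO(4)/U(2)$ with the unit sphere in $\Lambda^{+}$), this is only consistent when $\alpha = 0$. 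After this pointwise linear algebra fact, both directions of the proposition are immediate.
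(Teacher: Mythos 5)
Your proof is correct and follows the paper's approach exactly: the paper presents the identity $\Lambda^{-}\otimes\C=\bigcap_{J\in S^2}\Lambda^{1,1}_J$ and states that the proposition follows, which is precisely your reduction to a pointwise linear-algebra statement about $F_A$. Your elaboration of the reverse inclusion — projecting a putative $(1,1)$-for-all-$J$ form onto $\Lambda^{+}\otimes\C$ and using that $J\mapsto\omega_J$ sweeps out the unit sphere in $\Lambda^{+}$ to force the self-dual part to vanish — fills in the detail the paper labels ``easy to see,'' and is the standard justification.
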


\subsection{Nahm transform}  \label{section Nahm commutative}

Let $X$ be a closed,  Riemannian 4-manifold. 
Take a spin structure $\frak{s}$ on $X$.  Then we have the spinor bundles $S^{\pm}$ on $X$.   Choose a complex vector bundle $E$ with a Hermitian metric.   For each unitary connection $A$ on $E$ we have the twisted Dirac operator
\[
      D_A : \Gamma(S^+ \otimes E) \rightarrow \Gamma(S^- \otimes E).
\]
Note that if 
\begin{equation}    \label{eq ker D^*}
   \ker (D_A : \Gamma(S^+ \otimes E) \rightarrow \Gamma(S^- \otimes E)) = 0, 
\end{equation}
then  $D_A^* $ is surjective.   

Suppose that we have a continuous family $\{ A_{y} \}_{y \in Y}$ of unitary connections on $E$.  We assume that $A_y$ satisfies (\ref{eq ker D^*}) for all $y \in Y$.  Since $D_{A_y}^*$ is surjective,  the dimension of $\ker D_{A_y}^*$ is constant and 
\[
        \hat{E} :=  \coprod_{y \in Y} \ker D_{A_{y}}^*
\]
defines a subbundle of the trivial Hilbert bundle 
\[
        \underline{H} : = Y \times L^2(S^- \otimes E)
\] 
over $Y$ with fiber $H = L^2(S^- \otimes E)$. We have the covariant derivative corresponding to the trivial connection:  
\[
    \nabla : \Gamma( \underline{H} ) \rightarrow \Gamma( \underline{H} \otimes T^*X). 
\]
Let $i : \hat{E} \hookrightarrow \underline{H}$ be the inclusion and $p : \underline{H} \rightarrow \hat{E}$ be the $L^2$-projection. Then we get a connection $\hat{A}$ on $\hat{E}$ with covariant derivative 
\[
               \nabla_{\hat{A}} = p \nabla i. 
\]
We call $\hat{A}$ the Nahm transform of $\{ A_y \}_{y \in Y}$.

\subsection{Nahm transform of ASD connections on a 4-manifold with $b_1 = 4$}

We will follow the discussion in Section 3.2 of \cite{DK}. 

Let $X$ be a closed, oriented, spin, smooth $4$-manifold with $b_1 = 4$. 
 We denote by $\hat{X}$ the Picard torus:
\[
      \hat{X} := H^1(X;\R) / H^1(X;\Z)   \cong T^4. 
\]
We can think of $\hat{X}$ as the moduli space of $U(1)$-flat connections on $X$.

Let  $A$ be a connection on a Hermitian vector bundle $E$ over $X$.   We have the family $\{  A_{\rho} \}_{\rho \in \hat{X}}$ of connections parametrized by $\hat{X}$.    Here  $A_{\rho} = A \otimes \rho$.  

Let $S^{\pm}$ be the spinor bundles over $X$. Then we have the twisted Dirac operators
\[
   \begin{split}
       & D_{A_{\rho}} : \Gamma(E \otimes L_{\rho} \otimes S^+) \rightarrow \Gamma(E \otimes L_{\rho} \otimes S^-),  \\
       & D_{A_{\rho}}^* : \Gamma(E \otimes L_{\rho} \otimes S^-) \rightarrow \Gamma(E \otimes L_{\rho} \otimes S^+). 
   \end{split}
\]
Here $L_{\rho}$ is the flat line bundle corresponding to $\rho$. 

From now on, we assume  that $A$ is ASD (and hence $A_{\rho}$ is also ASD) and that the following condition holds:
\begin{equation}
   \ker D_{A_{\rho}}^* = 0 \quad (\forall \rho \in \hat{X}). 
\end{equation}

 Applying the Nahm transform in Section \ref{section Nahm commutative} to $\{ A_{\rho} \}_{\rho \in \hat{X}}$, we obtain a connection $\hat{A}$ on the bundle $\hat{E} \rightarrow \hat{T}$. 
More precisely, the bundle
\[
      \coprod_{\rho \in \hat{X}} L^2(S^- \otimes E \otimes L_{\rho}) \rightarrow \hat{X}
\]
does not have a natural trivialization. This means that we can not apply the Nahm transform to $\{  A_{\rho}  \}_{\rho \in \hat{X}}$ directly.  To avoid this issue, we consider the family of ASD connections $\{ A_{\tilde{\rho}} \}_{\tilde{\rho} \in H^1(X ; \R)}$ parameterized by  the universal cover $H^1(X ; \R)$ of $\hat{X}$.  Then the flat line bundle $L_{\tilde{\rho}}$ can be considered to be the pair $(\underline{\C}, \tilde{\rho})$ of the trivial bundle complex line bundle $\underline{\C}$ over $X$ and the flat connection $\tilde{\rho}$.  Since $L_{\tilde{\rho}}$ is trivial as a topological complex line bundle, we can think that the operators $D_{\tilde{\rho}}$ act on the the same space $\Gamma(S^- \otimes E)$.
Therefore we can apply the Nahm transform to this family and we get a connection $\tilde{A}$ on 
\[
            \tilde{E} = \coprod_{\tilde{\rho} \in H^1(X; \R)} \ker D_{A_{\tilde{\rho}}}^*  \rightarrow H^1(X; \R).
\]
Recall that the corresponding covariant derivative $\nabla_{\tilde{A}}$ is given by the formula: 
\begin{equation} \label{eq nabla tilde}
           \nabla_{\tilde{A}} = p \nabla i.
\end{equation}
Here $i$ is the family of inclusions  $i_{\tilde{\rho}} : \ker D_{A_{\tilde{\rho}}} \hookrightarrow L^2(S^- \otimes E )$, 
 $p$ is the family of projections $p_{\tilde{\rho}} : L^2(S^- \otimes E) \rightarrow \ker D_{A_{\tilde{\rho}}}$ and $\nabla$ is the trivial connection. 
If $\tilde{\rho} \in H^1(X;\Z)$, we have
\[
              D_{ A_{\tilde{\rho} }}^* = u^{-1} D_{A_0}^* u,
\]
where $u : X \rightarrow U(1)$ is the harmonic gauge transform corresponding to $\tilde{\rho}$. Hence:
\begin{equation}  \label{eq i p}
      i_{\tilde{\rho}} =  i_0 u,  \quad
      p_{\tilde{\rho}} = u^{-1} p_0.
\end{equation}
By (\ref{eq nabla tilde}) and (\ref{eq i p}), we can see that the connection $\tilde{A}$ naturally  descends to a connection $\hat{A}$ on the bundle:
\[
           \hat{E} = \coprod_{\rho \in \hat{X}} \ker D_{A_{\rho}}^* 
\]
because 
\[
         \hat{E} =  \coprod_{  \tilde{\rho} \in H^1(X;\R) } \ker D^*_{\tilde{\rho}}  \  / \ H^1(X;\Z),
\]
where the action of $H^1(X;\Z)$ is the adjoint of the harmonic gauge transforms
 $u : X \rightarrow U(1)$. 

\begin{theorem}
The connection $\hat{A}$ on $\hat{E}$ is ASD. 
\end{theorem}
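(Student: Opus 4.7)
The plan is to apply the pointwise $(1,1)$-type criterion of Proposition \ref{prop (1,1) type}: since $\hat X$ is a flat 4-torus, for every compatible constant complex structure $\hat J$ on $\hat X$ it suffices to show that $F_{\hat A}$ has vanishing $(0,2)$-component with respect to $\hat J$ (the $(2,0)$-part then follows by unitarity). The natural identification $T\hat X = H^1(X;\R)$ pairs each such $\hat J$ with a compatible pointwise complex structure on $X$, under whose induced Dolbeault splitting $S^+\otimes E = \Lambda^{0,0}(E)\oplus \Lambda^{0,2}(E)$ and $S^-\otimes E = \Lambda^{0,1}(E)$ the twisted Dirac operator takes the form $D_A = \sqrt{2}(\bar\partial_A + \bar\partial_A^*)$.

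First I would derive a compact formula for the curvature of $\hat A$. Setting $P := i\circ p : \underline{H} \to \underline{H}$ for the orthogonal projection onto $\hat E \subset \underline{H}$, and using $\nabla_{\hat A} = p\nabla i$ together with $\nabla^2 = 0$ on the trivial bundle, a routine computation yields, for $\sigma \in \ker D^*_{A_{\tilde\rho}}$,
\begin{equation*}
F_{\hat A}(\partial_\alpha, \partial_\beta)\sigma \;=\; p\bigl[(\partial_\alpha P)(\partial_\beta P) - (\partial_\beta P)(\partial_\alpha P)\bigr]\sigma.
\end{equation*}
Writing $P = 1 - D_{A_{\tilde\rho}} G\, D^*_{A_{\tilde\rho}}$ with Green operator $G = (D^*D)^{-1}$ on $S^+\otimes E$, and exploiting the linear variation $\partial_\alpha D_{A_{\tilde\rho}} = i\,c(e_\alpha)$ by Clifford multiplication by the harmonic 1-form $e_\alpha$ representing $\partial_\alpha \in T\hat X$, this reduces on $\ker D^*$ to
\begin{equation*}
F_{\hat A}(\partial_\alpha, \partial_\beta) \;=\; p\bigl(c(e_\beta)\,G\,c(e_\alpha) - c(e_\alpha)\,G\,c(e_\beta)\bigr)\big|_{\ker D^*}.
\end{equation*}

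Next I would bring in the ASD hypothesis on $A$: by Proposition \ref{prop (1,1) type}, $F_A$ is of type $(1,1)$ under the chosen $J$ on $X$, hence $\bar\partial_A^2 = F_A^{0,2} = 0$, which forces $D^*D$ to preserve the bidegree splitting on $S^+\otimes E$ and hence its inverse $G$ does as well. Choose holomorphic coordinates $w^1, w^2$ on $\hat X$ with respect to $\hat J$, so that $\partial_{\bar w^i}$ corresponds to a $(0,1)$-form $e_{\bar i}$ on $X$. For $\sigma \in \ker D^* \subset \Lambda^{0,1}(E)$, Clifford multiplication by a $(0,1)$-form acts as wedging, so $c(e_{\bar i})\sigma \in \Lambda^{0,2}(E)$; by the previous step $G\,c(e_{\bar i})\sigma$ remains in $\Lambda^{0,2}(E)$; and $c(e_{\bar j})$ then wedges it to a section of $\Lambda^{0,3}(E) = 0$. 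Both summands of $F_{\hat A}(\partial_{\bar w^i}, \partial_{\bar w^j})$ consequently vanish, and the $(2,0)$-part vanishes by complex conjugation, so $F_{\hat A}$ is of type $(1,1)$ for every compatible $\hat J$.

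The main obstacle is the Dolbeault bookkeeping of the middle step: one must justify that the complex structure $\hat J$ on $\hat X$ can be transferred through the harmonic identification $T\hat X = H^1(X;\R)$ to a pointwise compatible $J$ on $X$ under which the $e_{\bar i}$ really are of type $(0,1)$, and then verify that the Green operator $G = (D^*D)^{-1}$ preserves the bidegree splitting on $S^+\otimes E$. Both points hinge crucially on the ASD hypothesis via $F_A^{0,2} = 0$; once they are installed, the vanishing of $F_{\hat A}^{0,2}$ is an essentially algebraic consequence of the triviality of $\Lambda^{0,3}$ in complex dimension two.
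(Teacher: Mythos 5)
Your proposal takes the same global strategy as the paper: reduce, via Proposition~\ref{prop (1,1) type}, to showing that $F_{\hat A}$ is of type $(1,1)$ for every compatible complex structure on the Picard torus, and then invoke the complex-geometric interpretation of $\ker D^*_{A_{\tilde\rho}}$ as $(0,1)$-Dolbeault cohomology. Where you diverge is in execution. The paper's proof is a six-line outline that works at the level of holomorphic vector bundles: it cites Section~3.1.3 of~\cite{DK} for the existence of a natural holomorphic structure on $\tilde E$ and an ``infinite dimensional version'' of Lemma~(3.1.20) of~\cite{DK} to say the projected connection $\tilde A$ is compatible with that holomorphic structure, from which $F_{\tilde A}\in\Lambda^{1,1}_{\tilde J}\otimes\fg_{\tilde E}$. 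You instead compute the curvature directly: setting $P = i\circ p$ you write $F_{\hat A}=P(\nabla P)\wedge(\nabla P)$ on $\hat E$, reduce this with $P = 1 - D_{A_{\tilde\rho}}GD^*_{A_{\tilde\rho}}$ and $\partial_\alpha D_{A_{\tilde\rho}}=ic(e_\alpha)$ to $F_{\hat A}(\partial_\alpha,\partial_\beta)=p\bigl(c(e_\beta)Gc(e_\alpha)-c(e_\alpha)Gc(e_\beta)\bigr)\big|_{\ker D^*}$, and then kill $F^{0,2}_{\hat A}$ by the Clifford-Dolbeault bookkeeping: $c(e_{\bar i})$ sends $\Lambda^{0,1}(E)$ into $\Lambda^{0,2}(E)$, the Green operator $G=(D^*D)^{-1}$ preserves bidegree on $S^+\otimes E$ precisely because $\bar\partial_A^2=F_A^{0,2}=0$, and a further $c(e_{\bar j})$ lands in $\Lambda^{0,3}(E)=0$. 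In effect you have unfolded and made explicit the content of the two~\cite{DK} citations that the paper leaves as black boxes; the paper's route is shorter and more structural (the holomorphic subbundle picture makes the $(1,1)$-conclusion immediate once one trusts the references), while yours is self-contained and closer to a proof one could check line by line. Both arguments hinge, as you yourself flag, on transferring the chosen compatible complex structure $\hat J$ on $\hat X=H^1(X;\R)/H^1(X;\Z)$ back to a genuine (not merely pointwise) complex structure $J$ on $X$ making $\bar\partial_A$ integrable; this is unproblematic for $X=T^4$, where $H^1(X;\R)$ is canonically the tangent space, but for a general closed spin $4$-manifold with $b_1=4$ this transfer is the substantive gap in both your sketch and the paper's, and neither supplies it.
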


\begin{proof}
We will show outline of the proof. 

Put $V := H^1(X;\R) (\cong \R^4)$ and let $\pi : V  \rightarrow \hat{X}$ be the projection.  Then $\hat{A}$ is ASD if and only if $\pi^* \hat{A} \ ( = \tilde{A})$ is ASD.  The Riemannian metric on $X$ naturally induces a metric on $V$. 
Take any complex structure $\tilde{J}$ on $V$ compatible with the metric.  As explained in Section 3.1.3 of \cite{DK}, $\tilde{E} $ has a natural holomorphic structure, and  an infinite dimensional version of Lemma (3.1.20) of \cite{DK} shows that the connection $\tilde{A}$ is compatible with the holomorphic structure. 
Therefore
\[
      F_{ \tilde{A} } \in \Lambda^{1, 1}_{\tilde{J}} \otimes \frak{g}_{\tilde{E}} 
\]
at each point.  By Proposition \ref{prop (1,1) type}, $\tilde{A}$ (and hence $\hat{A}$) is ASD.

\end{proof}

 \section{Higher Nahm transform}
Let $(X,g)$ be a compact Riemannian four manifold, 
and $E \to X$ be a unitary bundle
equipped with a connection $A$. 
Denote $\tilde X$ a $\Gamma$-cover of $X$ with covering group $\Gamma$.
For example, 
$\Gamma= \pi_1(X)$ is the fundamental group and $\tilde X$ is the universal cover. Consider the accociated $C^*_r(\Gamma)$ bundle:
$$\gamma := \tilde{X} \times_{\Gamma} C^*_r(\Gamma).$$
Assume that $X$ is spin, and let $S$ be the spinor bundle.
Denote $S \hat{\otimes} E := S_E$. 
Let us introduce an $L^2$-inner product 
on the smooth sections of $S_E \otimes \gamma$:
$$(f,g) : = \int_X <f(x),g(x)> vol \ \in \ C^*_r(\Gamma)$$
where $< \quad >$ is the Hilbert module inner product,
and denote its completion by $L^2(S_E \otimes \gamma)$.

A connection  $A$ on $E$ induces the twisted connection on $E \otimes \gamma$ by:
$$\nabla_A(f \otimes\sigma) = \nabla_A(f ) \otimes \sigma +  f \otimes d \sigma.$$
Hence $A$ induces a Dirac operator $D_{A}$ on
$S_E  \otimes \gamma$:
$$D_{A}^+ : L^2(X; S^+_E \otimes \gamma)
 \to L^2(X; S^-_E \otimes \gamma).$$
 Both ker $D_A$ and  coker $D_A$  consists of finitely generated projective $C^*_r(\Gamma)$  modules after compact perturbation.
  The higher index is defined as their formal difference:
$$\text{ind } D_A := [\text{ ker } D_A] - [\text{ coker } D_A]
 \in K_0(C^*_r(\Gamma)).$$

 Suppose ker $D_A^+ =0$, and 
consider the finitely generated projective 
$C^*_r(\Gamma)$ module:
$$\cE_A := \text{ coker } D_A ^+ =\text{ ker } D_A^-
\subset 
L^2(X; S^-_E \otimes \gamma).$$

\begin{lemma}\label{proj}
Suppose ker $D_A^+ =0$.
Then there is the $C^*_r(\Gamma)$ module projection:
$$P: L^2(X; S^-_E \otimes \gamma) \to
\cE_A
$$
\end{lemma}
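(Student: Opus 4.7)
The plan is to realize $P$ as the orthogonal projection onto $\cE_A$ arising from a Hodge-type decomposition of the Hilbert $C^*_r(\Gamma)$-module $L^2(X; S^-_E \otimes \gamma)$. First I would record the Mishchenko-Fomenko framework already invoked just before the statement: because $X$ is compact and $D_A$ is a first-order elliptic differential operator with coefficients in the $C^*_r(\Gamma)$-bundle $\gamma$, the operators $D_A^+$ and $D_A^-$ are mutually adjoint regular Fredholm operators between Hilbert $C^*_r(\Gamma)$-modules. After the compact perturbation used to define the higher index, the range of $D_A^+$ is closed and $\cE_A = \ker D_A^-$ is a finitely generated projective $C^*_r(\Gamma)$-submodule.

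Granted closed range, the standard $C^*$-module identity $(\im D_A^+)^{\perp} = \ker (D_A^+)^{*} = \ker D_A^-$ yields the orthogonal decomposition
\[
L^2(X; S^-_E \otimes \gamma) \;=\; \im D_A^+ \;\oplus\; \cE_A,
\]
with both summands complemented in the Hilbert $C^*$-module sense. I would then let $P$ be the projection onto $\cE_A$; it is adjointable because the decomposition is orthogonal, and it is right $C^*_r(\Gamma)$-linear because $D_A^{\pm}$ commute with the right $C^*_r(\Gamma)$-action on $\gamma$ (the differentiation being along the base $X$). An explicit formula, useful for what follows, is
\[
P \;=\; 1 - D_A^+\, (D_A^- D_A^+)^{-1}\, D_A^-,
\]
where $D_A^- D_A^+$ is invertible on the appropriate dense domain in virtue of $\ker D_A^+ = 0$ together with the closed range of $D_A^+$.

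The main obstacle is that, unlike the Hilbert space case, a closed submodule of a Hilbert $C^*$-module is not automatically orthogonally complemented, so mere closedness of $\cE_A$ does not suffice to produce $P$. This is resolved by combining two inputs already in play: ellipticity of $D_A$ on the compact manifold $X$ supplies closed range, and the finitely-generated-projective property of $\cE_A$ then guarantees that the orthogonal projection exists as an adjointable $C^*_r(\Gamma)$-module map. It remains only to observe that this $P$ agrees with the projection implicitly used in the introduction to define the non-commutative Nahm connection $\hat{d}_A$, which is immediate from uniqueness of orthogonal projections onto a complemented submodule.
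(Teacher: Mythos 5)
Your proof is correct and takes essentially the same approach as the paper: the paper also exhibits the projection via the explicit formula $P = \mathrm{id} - (D_A^-)^* (D_A^- (D_A^-)^*)^{-1} D_A^-$, which is identical to yours since $(D_A^-)^* = D_A^+$. You supply somewhat more detail than the paper (the Hodge-type decomposition, the remark about complementation in Hilbert $C^*$-modules, the role of ellipticity on a compact base), but the core argument and the final operator $P$ are the same; the one small inaccuracy is that under the stated hypothesis $\ker D_A^+=0$ the compact perturbation is not needed — ellipticity and compactness of $X$ already give $D_A^- D_A^+$ invertible, which is what the formula requires.
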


\begin{proof}
Notice that  ker $ (D_A^-)^* =$ ker $D_A^+ =0$ by the assumption.
Then we consider the bounded operator:
$$ P: = \text{ id} - (D_A^-)^* (D_A^- (D_A^-)^*)^{-1} D_A^-$$
on $L^2(X; S^-_E \otimes \gamma)$.
It is easy to check that this satisfies the required properties.
\end{proof}

\begin{remark}
Notice that in general the projection does not exists
for a Hilbert $C^*_r(\Gamma)$ module embedding
$\cE \hookrightarrow \cH$.

Actually $\cE \oplus \cE^{\perp}$ do not coincide with
$\cH$ in general.
\end{remark}

\subsection{Quatized calculus}

We recall a notion of 
connection on a finitely generated projective module associated to a spectral triple,
in non commutative geometry.

Let $(\cA, \cH, D)$ be a spectral triple, where $\cA$ is a unital $*$-algebra represented in a Hilbert space $\cH$ as $\pi: \mathcal{A} \to \mathcal{L}(\cH)$,
  where $\mathcal{L}(\cH)$ is the set of all bounded operators on $\cH$.
   $D$ is an unbounded operator on $\cH$ such that
$[D, a]\in\cL(\cH)$ is bounded for any $
a \in \cA$, and $(1+D^2)^{-1}\in\cK(\cH)$ is compact.
  Suppose  Ker $D=0$ and define: 
  $$\hat{d}(a) :=  i[F,a] \in \mathcal{L}(H)$$
  where $F = \frac{D}{|D|}$.
  Let us consider the linear space:
  $$\hat{\Omega}^*_F : = \ \text{ span } \{ \  a^0    \hat{d}a^1 \dots \hat{d}a^q \ \}$$
   spanned by vectors of the form
$a^0  \hat{d}a^1 \dots \hat{d}a^q$
  for $q \geq 0$, where  $a^i \in \mathcal{A}$.
It  admits a structure of  graded algebra
  $\hat{\Omega}_F = \oplus^q \ \hat{\Omega}_F^q$.
  We call an element in this space as a non commutative differential form.

  \begin{lemma}\cite{Connes94}
  It satisfies the following two properties:
  \begin{align*}
  & (1) \ \ \hat{d}^2=0, \\
  & (2) \ \ \hat{d}(a_1a_2) = (\hat{d}a_1 ) a_2 +  a_1 \hat{d} a_2.
  \end{align*}
  \end{lemma}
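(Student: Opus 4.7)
My plan is to dispatch (2) by direct computation and to reduce (1) to the vanishing of $\hat{d}$ on the unit, once $\hat{d}$ is extended to the full graded algebra $\hat{\Omega}^*_F$.

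For (2), since $\hat{d}a = i[F,a]$ and $[F,\cdot]$ is a derivation on $\mathcal{L}(H)$, I would just expand
\begin{equation*}
\hat{d}(a_1 a_2) \;=\; i[F,a_1 a_2] \;=\; i[F,a_1]\,a_2 + a_1 \cdot i[F,a_2] \;=\; (\hat{d}a_1)\,a_2 + a_1\,\hat{d}a_2 .
\end{equation*}
This uses nothing but linearity of $F$ and the definition of the commutator, and is independent of the hypothesis $\ker D = 0$.

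For (1), my strategy is to extend $\hat{d}$ to $\hat{\Omega}^q_F \to \hat{\Omega}^{q+1}_F$ on decomposable elements by
\begin{equation*}
\hat{d}\bigl(a^0\,\hat{d}a^1\cdots\hat{d}a^q\bigr) \,:=\, \hat{d}a^0\,\hat{d}a^1\cdots\hat{d}a^q \,=\, 1\cdot \hat{d}a^0\,\hat{d}a^1\cdots\hat{d}a^q,
\end{equation*}
the second equality using that $\mathcal{A}$ is unital. Granted this definition, $\hat{d}^2 = 0$ is immediate on the generators: applying $\hat{d}$ once more pulls out a prefactor $\hat{d}(1) = i[F,1] = 0$. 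The graded Leibniz rule on the full algebra then follows from the degree-zero case in (2) together with the bracketing of $F$.

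The main obstacle is well-definedness of the extension in (1). Because the map $\pi : a^0\,d_u a^1\cdots d_u a^q \longmapsto a^0\,\hat{d}a^1\cdots\hat{d}a^q$ from Connes' universal differential graded algebra $\Omega^*_u\mathcal{A}$ to $\mathcal{L}(H)$ generally has a nontrivial graded kernel $J$, a relation in $\hat{\Omega}^*_F$ need not descend under the naive formula above; the failure is precisely the \emph{junk forms} $\pi(d_u J)$. The fix, following \cite{Connes94}, is to take $\hat{\Omega}^*_F$ to be the quotient $\pi(\Omega^*_u\mathcal{A})/\pi(d_u J)$, on which the universal differential $d_u$ (which satisfies $d_u^2 = 0$ and the Leibniz rule tautologically) descends to a well-defined $\hat{d}$. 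After this reduction, the argument above on generators establishes both (1) and (2) on the whole differential graded algebra, and I would either invoke this construction from Connes directly or sketch it in a remark.
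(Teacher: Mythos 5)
The paper gives no proof of its own; it cites \cite{Connes94} and later (Section 5) supplies the graded extension of $\hat{d}$ by the formula $\hat{d}\alpha := i\bigl(F\alpha - (-1)^k\alpha F\bigr)$ for $\alpha\in\Omega^k_F$. So the relevant comparison is against the argument that setup invites.

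Your proof of (2) is correct and is the intended one: $[F,\cdot]$ is a derivation on $\mathcal{L}(H)$ and (2) is immediate.

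For (1), your route through the universal algebra and junk forms is a legitimate piece of Connes' machinery, but it is considerably heavier than this lemma requires, and it omits the one algebraic identity that actually does all the work: since $D$ is self-adjoint and $\ker D = 0$, the operator $F = D\lvert D\rvert^{-1}$ is a self-adjoint unitary, so $F^2 = 1$. With the graded-commutator extension above, which is manifestly well-defined as a formula on an operator together with its declared degree, one computes in two lines
\begin{equation*}
\hat{d}^2\alpha \;=\; i^2\bigl(F^2\alpha - (-1)^k F\alpha F + (-1)^k F\alpha F - \alpha F^2\bigr)\;=\;-(\alpha-\alpha)\;=\;0,
\end{equation*}
and the graded Leibniz rule is equally direct from the same formula. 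Moreover, your proposed naive extension $\hat{d}(a^0\,\hat{d}a^1\cdots\hat{d}a^q):=\hat{d}a^0\,\hat{d}a^1\cdots\hat{d}a^q$ in fact agrees with the graded-commutator formula, because $F^2=1$ forces $F$ to \emph{anticommute} with each $\hat{d}a^i$ (check: $F\,\hat{d}a + \hat{d}a\,F = i(F^2a - FaF + FaF - aF^2)=0$), so the well-definedness obstruction you worry about does not actually arise for this statement. The quotient by $\pi(d_uJ)$ is what one performs to get Connes' differential graded algebra $\Omega^*_D$ with honest degree components; it is not needed to establish either (1) or (2). If you do present the junk-form route, you should still flag $F^2=1$, since that identity is what makes $\pi$ intertwine the differentials and makes $\hat{d}$ square to zero.
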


Let
$\mathcal{E}$ be a finitely generated projective module over $\cA$. 
A {\em connection} $A$ on $\mathcal{E}$ is a linear map with a derivation property:
\begin{align*}
& \nabla_A: \mathcal{E}\rightarrow \mathcal{E}\otimes_{\cA}\hat\Omega^1_D, \\
& \nabla_A(ea)=(\nabla_A e)a+e\hat{d}a \qquad e\in \mathcal{E}, a\in \cA. 
\end{align*}

\begin{remark}
If $\cE=\Gamma(S)$ denotes the space of smooth sections of 
the spinor bundle $S$ over a closed spin manifold $X$,
 then $\cE$ is a finitely
generated  projective module over $\cA=C^{\infty}(X)$. In this case a canonical choice of spectral triple is the Dirac spectral triple $(C^{\infty}(X), L^2(X, S), D)$ where $D$ is the Dirac operator on the spinor bundle.
It is known that for $a\in C^{\infty}(X),$  the equality holds:
\[
[D, a]=c(da)
\]
and hence, 
one recovers the classical $1$-form $da\in C^{\infty}(X, T^*X)$ 
via the Clifford multiplication: 
\[
c: TX\rightarrow  \End S.
\]
\end{remark}

Assume that $C^*_r(\Gamma)$ admits a dense subalgebra $C^{\infty}(\Gamma)$ containing $\C\Gamma$ closed under holomorphic functional calculus.   
Let  $\cA = C^{\infty}(\Gamma)$, and 
$H \otimes C^{\infty}(\Gamma)$ be the trivial $C^{\infty}(\Gamma)$
module equiped with the trivial connection
$\hat{d}$  on $\mathcal{H} $
given by:
$$\hat{d}(h \otimes u) := h \otimes \hat{d}(u) \in H \otimes \hat{\Omega}^1 = 
 (H \otimes C^{\infty}(\Gamma) )
 \otimes_{C^{\infty}(\Gamma)} \hat{\Omega}^1_F.$$
 
Here $H\otimes C^{\infty}(\Gamma)$ is viewed as a dense subspace of its $C^*$-module completion $H \otimes C^*_r(\Gamma)$. 
 
 It is well known as Kasparov's stabilization that there exists a Hilbert $C^*_r(\Gamma)$
 module isomorphism:
 $$L^2(X; S^-_E \otimes \gamma) \oplus (H \otimes C^*_r(\Gamma) )
 \cong H \otimes C^*_r(\Gamma) .$$
 
 Let us fix this isomorphism, and let:
 $$P': H \otimes C^*_r(\Gamma)  \to L^2(X; S^-_E \otimes \gamma)$$
 be the projection.
 
 Recall lemma \ref{proj}.
 Composed with another projection
 $P =P_A: L^2(X; S^-_E \otimes \gamma) \to \cE_A$, 
 one obtains the projection:
 $$Q_A: = P_A \circ P': \ H \otimes C^*_r(\Gamma)  \to \cE_A.$$
Denote by $\cE_A^{\infty}$ the dense subspace of $\cE_{A}$ obtained by intersecting $\cE_A$ with smooth sections $C^{\infty}(X,  \tilde S_E \otimes_{\Gamma}C^{\infty}(\Gamma))$ in $L^2(X, S_E\otimes\gamma).$
The trivial connection on $H\otimes C^{\infty}(\Gamma)$ induces  the connection on $\mathcal{E}_A^{\infty}$
by composition:
$$\hat{d}_A: = Q_A \circ \hat{d}: \ \cE_A^{\infty} \to 
\mathcal{E}_A^{\infty} \otimes_{\cA}\hat{\Omega}^1_F.$$

\begin{definition}
A  higher Nahm transform is given by:
$$(E,A) \to (\mathcal{E}_A^{\infty}, \hat{d}_A).$$
\end{definition}
Notice that the right hand side is a non commutative object and 
does not involve space.

 \subsection{Dixmier trace and Connes-Yang-Mills functional}

In~\cite{ConnesYM}, Connes  made use of Dixmier trace to study the Yang-Mills  functional. We recall the definitions and constructions. 

For every positive element $A \in\mathcal{K}(\mathcal{H})$, put:
$$\mu_n=\inf_{T\in R_n}\|A-T\|$$ 
where $R_n=\{Q\in\cL(\cH): \mathrm{rank}(Q)\le n\}$,
and 
$$\delta_N (A) = \sup \ \{ \  \Tr (AP) ; \ \rk (P) \le N \ \}$$
 where the supremum is over all projections $P$ whose ranks are no more than $N$. 
 Equivalently, if $\lambda_i\ge 0$ is a decreasing sequence of eigenvalues of $A$, then 
 \[
 \delta_N(A)=\sum_{i=1}^N \lambda_i\qquad \mu_n(A)=\lambda_n.
 \]
 
For $T\in\cK(\cH)$, let us define a norm: 
\[
\|T\|_{(1, \infty)}:=\sup_{N}\frac{1}{\log(1+N)}\sum_{n=1}^N\mu_n(T).
\]
This gives rise to the Banach ideal of $\cK(\cH):$
\[
\cL^{(1, \infty)}(\cH)=\{T\in\cK(\cH): \|T\|_{(1, \infty)}<\infty\}.
\]
Denote its positive cone by: 
\[
\cL^{(1, \infty)}_+(\cH)=\{T\in\cL^{(1, \infty)}(\cH): T\ge0\}.
\] 
This is the domain of the Dixmier trace.  
 
For every linear form $\omega$ on $l^{\infty}$, denote 
\[
\omega-\lim(\{a_n\}):=\omega(\{a_n\}).
\] 
  Let $\omega$ be a state (positive linear functional with norm $1$) on $l^{\infty}$ satisfying the following properties:
\begin{itemize}
\item $\omega-\lim(\{a_n\})\ge0$ if $a_n\ge 0;$
\item $\omega-\lim(\{a_n\})=\lim a_n$ if $\{a_n\}$ is convergent; 
\item $\omega$ is invariant under dilation $D_2$, i.e., 
\begin{equation}
\label{eq:dialation}
 \omega-\lim(\{a_n\})=\omega-\lim(a_1, a_1, a_2, a_2, \ldots)\qquad \forall \{a_n\}\in l^{\infty}.
\end{equation} 
\end{itemize}

\begin{definition}
The Dixmier trace $\Tr_{\omega}: \cL^{(1, \infty)}_+(\cH)\rightarrow[0, \infty)$ is defined by:
\begin{align*}
\Tr_{\omega} (A)  & = \omega- \lim_{N \to \infty} \  
\frac{1}{\log(N+1)} \ \delta_N(A) \\
& =\omega- \lim_{N \to \infty} \  
\frac{1}{\log(N+1)} \sum_{n=1}^N\mu_n(A).
\end{align*}
We extend it to $\Tr_{\omega}: \cL^{(1, \infty)}(\cH)\rightarrow \C.$
\end{definition} 

 Consider a complex vector bundle $E$ on a closed manifold $X$ of dimension $n$ and the Hilbert space $\cH=L^2(X, E)$. 

For $m\in\Z$, denote by $\Psi^m(X, E)$ the space of pseudo differential operators over $X$ of order $m.$
For $P\in\Psi^{-n}(X, E)$, denote by $\sigma_P\in C^{\infty}(S^*X, \End E)$ its principal symbol. 
Define the Wodzicki residue of $P$ by: 
\[
\Res(P)=\frac{1}{(2\pi)^n}\int_{S^*X}\tr_E\sigma_{-n}(P)d\nu.
\]
Here, $d\nu$ is the volume element defined by 
$\frac{(-1)^{\frac{n(n+1)}{2}}}{(n-1)!}\alpha\wedge(d\alpha)^{\wedge(n-1)}$, where $\alpha =\sum_{i}\xi_idx_i$ in local coordinate, is the canonical $1$-form on $T^*X$,
and $S^*X$ is the unit sphere bundle.

Below is the Connes' trace formula:
\begin{theorem}~\cite{ConnesYM}
Let $E \to X$  and $\cH$ be as above. 
Let $P\in\Psi(X, E)$ be a pseudo differential operator of order $-n$. Then $P\in\cL^{(1, \infty)}(\cH)$ and the equality holds:
\[
\Tr_{\omega}(P)=\frac{1}{n}\Res(P).
\]
\end{theorem}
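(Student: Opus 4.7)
The plan is twofold: first establish $P \in \cL^{(1,\infty)}(\cH)$ via Weyl-type singular-value asymptotics, then identify $\Trw$ with $\tfrac{1}{n}\Res$ by a uniqueness-of-trace argument on the quotient $\Psi^{-n}(X,E)/\Psi^{-n-1}(X,E)$, with the remaining constant pinned down by a single normalisation computation.

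For the membership claim, I would compare $P$ with an explicit positive elliptic model. Choose a positive elliptic $Q_0 \in \Psi^{n}(X,E)$, for example $Q_0 = (\Delta+1)^{n/2} \otimes \mathrm{id}_E$. By the classical Weyl eigenvalue asymptotics for $Q_0$, its inverse $Q_0^{-1} \in \Psi^{-n}(X,E)$ satisfies $\mu_k(Q_0^{-1}) \sim c_0/k$. Writing $P = (PQ_0)\,Q_0^{-1}$, where $PQ_0 \in \Psi^{0} \subset \cL(\cH)$ is bounded, the standard estimate $\mu_k(AB) \leq \|A\|\,\mu_k(B)$ yields $\mu_k(P) = O(1/k)$, which places $P$ in $\cL^{(1,\infty)}(\cH)$.

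For the main identity, the crucial first observation is that both $\Trw$ and $\Res$ vanish on $\Psi^{-n-1}(X,E)$: Weyl gives $\mu_k(Q) = O(k^{-1-1/n})$ for $Q \in \Psi^{-n-1}$, so $Q$ is trace class, hence $\Trw(Q)=0$; and $\sigma_{-n}(Q) = 0$ by definition, so $\Res(Q)=0$. Both functionals therefore descend to $\Psi^{-n}/\Psi^{-n-1}$, which is identified with homogeneous principal symbols of degree $-n$ on $T^*X\setminus 0$. Moreover $\Trw$ is a trace on $\cL^{(1,\infty)}$ — this uses the dilation invariance~\eqref{eq:dialation} of $\omega$ — while $\Res$ is Wodzicki's trace on the algebra of classical pseudodifferential operators. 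So both descend further to functionals on the commutative quotient obtained by factoring out the image of commutators. An averaging argument over cosphere fibers, or equivalently Wodzicki's uniqueness theorem, shows this quotient is one dimensional, forcing $\Trw = \lambda \cdot \Res$ on $\Psi^{-n}(X,E)$ for a universal constant $\lambda$ independent of $\omega$, $X$, $E$, and $P$.

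The constant $\lambda = 1/n$ is then pinned down by a single model computation on the flat torus $\T^n$: take $P = (1+\Delta)^{-n/2}$, whose eigenvalues form an explicit lattice sum, and count lattice points in shells to obtain $\delta_N(P) \sim \tfrac{1}{n}\Res(P)\cdot \log(N+1)$, matching the Dixmier normalisation $\tfrac{1}{\log(1+N)}\delta_N$. The main obstacle I expect is the uniqueness-of-trace step: verifying that every order $-n$ homogeneous symbol with $\int_{S^*X}\tr_E \sigma\, d\nu = 0$ lies in the image of the commutator bracket within the appropriate symbol algebra. This is where dilation invariance of $\omega$ is genuinely essential and where the argument leans most heavily on pseudodifferential calculus rather than on the functional analysis of singular values.
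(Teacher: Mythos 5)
Your proof is correct, but it takes a genuinely different route from the paper's. Where you pass through the \emph{algebraic} route --- both $\Trw$ and $\Res$ descend to the quotient $\Psi^{-n}/\Psi^{-n-1}$ and further to the cokernel of the commutator map $[\Psi^{0},\Psi^{-n}]\to\Psi^{-n}/\Psi^{-n-1}$, which Wodzicki's theorem identifies as one--dimensional --- the paper's argument (carried out for the $\Gamma$-analogue in Proposition~\ref{prop:gammaConnesTr}) is \emph{measure-theoretic}. There one observes that $P\mapsto\Trw(P)$ factors through the symbol $\sigma_{-n}(P)\in C(S^*X)$, applies the Riesz--Markov theorem to realise it as integration against a positive measure $\mu$ on $S^*X$, shows $\mu$ is isometry-invariant because $\Trw$ is unitarily invariant, reduces to $X=S^n$ where $S^*S^n$ is a homogeneous space for $SO(n+1)$, and invokes uniqueness of the invariant measure on a homogeneous space to force $\mu$ proportional to the canonical volume form. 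Both arguments then close the loop by fixing the universal constant on $\T^n$ via the lattice-shell count, exactly as you do. Your route is more economical and avoids privileging the sphere; the paper's route is more elementary in that it replaces the nontrivial symbol-level exactness lemma (every degree-$(-n)$ homogeneous symbol with vanishing cosphere integral is a sum of Poisson brackets mod lower order) by classical homogeneous-space measure theory.

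Two small points of imprecision in your writeup. First, the trace property $\Trw(AY)=\Trw(YA)$ does \emph{not} rest on the dilation invariance of $\omega$; it follows from the elementary fact that $\delta_N(U A U^{-1})=\delta_N(A)$ for unitary $U$ plus the fact that bounded operators are spanned by unitaries. Dilation invariance is what makes $\Trw$ \emph{additive} on $\cL^{(1,\infty)}_+$, hence linear, which you use tacitly when passing to a functional on the quotient. Second, the step you flag as the chief obstacle --- that zero-residue homogeneous symbols lie in the image of the Poisson bracket --- is a statement purely about the symbol algebra and has nothing to do with $\omega$; once you have a \emph{linear, unitarily invariant} functional on $\Psi^{-n}/\Psi^{-n-1}$ the Wodzicki exactness lemma does all the work, so the essentiality of dilation invariance is localised to linearity, not to the uniqueness step.
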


Let $X$ be a closed oriented spin $4$-manifold. Let $E$ be a finite dimensional complex vector bundle over $X$. Consider now the Hilbert space $\cH$ of the form 
\[
\cH=\cE\otimes_{\cA}\cH_0=L^2(X, E\otimes S) 
=L^2(X, S_E) 
\]
where $\cE=C^{\infty}(E)$, $\cA=C^{\infty}(X)$ and $\cH_0=L^2(X, S).$
Here $\cH$ is again a module over $\cA$ because $\cA$ is commutative. 

Let $A_c: \cE\rightarrow\cE\otimes_{\cA}C^{\infty}(X, T^*X)$ be a connection on $\cE$. 
These data give
 the Dirac spectral triple $(\cA, \cH_0, D)$,
 where $D=D_{A_c}$ acting on sections of $S_E$.
  Then one obtains a quatized connection replacing $da$ for $a\in C^{\infty}(X)$ by $[D|D|^{-1}, a]\in\cK(\cH_0)$,
  when ker $D=0$. 
It is a pseudo differential operator of order $-1$.

Denote $F=D|D|^{-1}$ and $\Omega^1_F$ be the span of quatized $1$ forms $a[F, b]\in\cL(\cH_0)$ for $a, b\in \cA$. 

$A: \cE \to \cE\otimes_{\cA} \Omega_F^1$ induces the quatized covariant derivative:
$$A: \cE\otimes_{\cA} \Omega_F^1 \to \cE\otimes_{\cA} \Omega_F^2$$
given by: 
$$A(e \otimes a[F,b] ) : = A(e) \otimes a[F,b] + e \otimes [F,a][F,b].$$
The quatized curvature is defined by the composition:
$$\theta: = A^2 : \cE \to \cE\otimes_{\cA} \Omega_F^2.$$
It follows from the formula:
$$\theta(ea) = \theta(e)a$$
 that $\theta$ can be regarded as 
an element in $\cL(\cH)$, induced by the composition:
$$\theta: \cE \otimes_{\cA} \cH_0 \to  \cE \otimes_{\cA} \cL(\cH_0) \otimes_{\cA} \cH_0
\to  \cE \otimes_{\cA} \cH_0.$$

There is a well defined surjective map $c: \Omega^1_F\rightarrow\Omega^1_{\cA}$ such that $c([F,a])=da.$
Denote by $\theta_c$ and $\theta$ the curvature of $A_c$ and the quatized connection $A$ respectively. 

\begin{theorem}[Connes \cite{ConnesYM}]
Suppose $X$ is of four dimension. 
The square of the quatized curvature $\theta^2\in\cL^{(1, \infty)}(\cH)$ gives rise to a positive functional:
\[
A\rightarrow \inf_{c(A)=A_c}\Tr_{\omega}(\theta^2)
\]
independent of $\omega$. 
The function coincide with the classical Yang-Mills functional up to a constant. 
\end{theorem}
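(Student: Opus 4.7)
The plan is to combine Connes' trace theorem with a fibrewise symbol calculation and an optimisation over the lifting freedom of the quantised connection, thereby reducing the noncommutative functional to a local Yang--Mills density on $X$.

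First I would establish membership and $\omega$-independence. Each commutator $[F,b]$ with $b \in \cA$ is a pseudodifferential operator of order $-1$, so any representative $a[F,b]$ of a quantised one-form has order $-1$; the quantised curvature $\theta = A^2$, regarded as an operator on $\cH$, is then of order $-2$, and $\theta^2$ is a pseudodifferential operator of order $-4 = -\dim X$. By Connes' trace theorem quoted above, $\theta^2 \in \cL^{(1,\infty)}(\cH)$ and
\[
\Tr_{\omega}(\theta^2) \;=\; \tfrac{1}{4}\,\Res(\theta^2),
\]
so the value depends only on the principal symbol $\sigma_{-4}(\theta^2)$ and is independent of $\omega$.

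Next I would compute that principal symbol. Locally write $A_c = \sum_i a_i\,db_i$; any quantised lift $A$ with $c(A)=A_c$ has the form $A = \sum_i a_i [F,b_i] + T$ with $c(T)=0$, so $T$ contributes only through lower-order pieces in degree one but produces a genuine principal symbol in degree two after squaring. Using $\sigma([F,b])(x,\xi) = i\,c(\xi)\,\sigma(db)(x)$, where $c$ denotes the Clifford action, expansion of $\theta = A^2$ identifies $\sigma_{-2}(\theta)$ as the sum of a \emph{classical piece} $\sigma_{cl}$ carrying the Clifford image of $F_{A_c}$ and an \emph{auxiliary piece} $\sigma_{\perp}$ determined entirely by the lift $T$. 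Both pieces take values in $\End(S_E)$ fibrewise.

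The heart of the argument is the optimisation step, which I would carry out in three sub-steps: (i) the fibrewise non-negative quadratic form $\tr_{S_E}\,\sigma_{-4}(\theta^2)(x,\xi)$ decomposes as $\tr_{S_E}(\sigma_{cl}^2) + 2\tr_{S_E}(\sigma_{cl}\,\sigma_{\perp}) + \tr_{S_E}(\sigma_{\perp}^2)$; (ii) the cross term vanishes after integration over the cosphere fibre $S^*_xX$ using invariance under the $\mathrm{SO}(T^*_xX)$-action and the orthogonality between $c(F_{A_c})$ and the Clifford image of $\ker c$; (iii) one then exhibits a lift realising $\sigma_{\perp} = 0$ in the fibrewise sense, so the infimum over lifts removes the auxiliary contribution. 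What remains is a universal Clifford-algebra computation: under $\Lambda^2 T^*_xX \cong \mathfrak{so}(T^*_xX)$, the fibre integral
\[
\int_{S^*_x X} \tr_{S_E}\!\bigl(c(F_{A_c})^2\bigr)(x,\xi)\,d\nu(\xi)
\]
equals a dimension-dependent universal constant times $|F_{A_c}(x)|^2$. Integrating over $X$ and combining with $\Tr_{\omega}(\theta^2) = \tfrac14\Res(\theta^2)$ gives
\[
\inf_{c(A)=A_c}\Tr_{\omega}(\theta^2) \;=\; C\int_X |F_{A_c}|^2\,d\vol_X,
\]
the classical Yang--Mills functional up to the universal constant $C$.

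The main obstacle will be sub-step (iii) of the optimisation: one must show that the kernel of $c:\Omega^1_F\to\Omega^1_{\cA}$ is rich enough that, after squaring, the lift freedom actually exhausts the orthogonal complement of $\sigma_{cl}$, so the infimum is genuinely attained (or approached) by the classical part alone. This amounts to a careful analysis of the Clifford symbol map at the level of two-forms together with a bookkeeping of subprincipal pieces that could in principle survive in the Dixmier trace; it is precisely at this point that the quadratic and positive nature of the Yang--Mills density, rather than some larger noncommutative quantity, is extracted.
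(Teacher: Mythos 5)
This statement is quoted from Connes \cite{ConnesYM} and is not re-proved in the paper; what the paper does prove, with essentially the same machinery, is the $\Gamma$-equivariant analogue (Lemma~\ref{lem}, Lemma~\ref{lem:Acdc}, Theorem~\ref{thm:inf}). Measured against that, your outline follows the same overall route: reduce $\Tr_\omega(\theta^2)$ to the Wodzicki residue $\tfrac14\Res(\theta^2)$ via Connes' trace theorem, observe that only $\sigma_{-2}(\theta)$ matters, and optimise over the lift freedom in $c^{-1}(A_c)$.

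However, your step (ii)--(iii) mis-locates where the freedom actually sits. You describe the decomposition $\sigma_{-2}(\theta)=\sigma_{cl}+\sigma_\perp$ with $\sigma_\perp$ ``determined entirely by the lift $T$,'' implicitly suggesting that the baseline lift ($T=0$) has $\sigma_\perp=0$. This is not the case: the proof of Lemma~\ref{lem:Acdc} shows that for $\omega=a\hat d b$ one has $c(\Theta)=da\otimes db + a^2\,db\otimes db$, whose symmetric part is generically non-zero. The correct organising principle, used by Connes and reproduced in this paper, is to push $\sigma_{-2}(\theta)$ through the map $\pi$ of Remark~\ref{rmk:Acd} into $C^\infty(X,T^*X\otimes T^*X)$ and split there into $\Lambda^2\oplus S^2$. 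The key facts are then: the antisymmetric part $Ac(\theta)$ equals $\theta_c$ \emph{for every} lift (Lemma~\ref{lem:Acdc}), the two summands are \emph{pointwise} orthogonal in $T^*_xX\otimes T^*_xX$, and the symmetric part, not $\sigma_\perp$ in your sense, is what the lift freedom controls. This gives the lower bound $\|c(\theta)\|^2\ge\|\theta_c\|^2$ immediately, without the fibrewise $\mathrm{SO}$-averaging argument you invoke (which, while plausible, replaces a transparent pointwise Pythagoras by a weaker integrated statement and is unnecessary).

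Your acknowledged gap in (iii) is genuine and is exactly the nontrivial content of Theorem~14 of \cite{ConnesYM}: one must show the map $T\mapsto \sigma_{-2}(\hat d T)$, as $T$ ranges over $\cA$-linear maps into $\ker c\subset\Omega^1_F$, has image dense in the symmetric tensors $S^2(T^*X)\otimes\End E$, so that the symmetric part of $c(\theta)$ can be cancelled. A helpful observation you should exploit is that for $T=\sum a_j[F,b_j]$ with $\sum a_j\,db_j=0$, one has $\hat d T=i\sum[F,a_j][F,b_j]$ and $Ac(\hat d T)=\sum da_j\wedge db_j=d(\sum a_j db_j)=0$, confirming that the admissible variations of $c(\theta)$ are automatically purely symmetric; what remains is surjectivity onto $S^2$, which you do not establish. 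This is the point the present paper delegates entirely to \cite{ConnesYM}, so your proposal is incomplete at the same place the paper itself appeals to the original reference.
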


\subsection{Pre Yang-Mills  functional}

We propose a model for pre Yang-Mills  functional using Dixmier trace for a finitely generated projective module. If the algebra involved is non commutative, then it is in general difficult to formulate an analogy to the Connes-Yang-Mills  functional in the commutative case. 

Let $(\cA, \cH_0, D)$ be a spectral triple. 
Assume the spectral triple is $n$-summable, i.e., $|D|^{-n}\in\cL^{(1, \infty)}(\cH_0)$. In our setting, $\cA=C^{\infty}(\Gamma)$ for $\Gamma$, a covering group  of a closed $4$-manifold $X$ and we will assume $n=4.$
Let $F=D|D|^{-1}\in\cL(\cH_0)$ 
and 
$\Omega^1_F(\cA)=\{a[F, b]| a, b\in \cA\}\subset\cL(\cH_0)$
be as before.

Let $\cE$ be a finitely generated right projective $\cA$-module, with a connection $\nabla^{\cE}: \cE\rightarrow\cE\otimes_{\cA}\Omega^1_F(\cA).$
By definition $\nabla^{\cE}$ satisfies the equality:
\[
\nabla^{\cE}(ea)=(\nabla^{\cE}e)a+e[F, a]\qquad e\in\cE, a\in\cA.
\]
We define its composition $\rfloor \circ (\nabla^{\cE})^2: \cE\otimes_{\cA}\cH_0\rightarrow\cE\otimes_{\cA}\cH_0$ with the contraction $\rfloor$ by: 
\[
\rfloor \circ (\nabla^{\cE})^2(\xi\otimes\zeta)=\sum\xi_{\alpha}\otimes \omega_{\alpha}(\zeta)
\]
when $\nabla^{\cE}\xi=\sum\xi_{\alpha}\otimes\omega_{\alpha}$ and $\xi_{\alpha}\in\cE, \omega_{\alpha}\in\Omega^2_F(\cA).$






 
 It can be checked that $[F, b]^n\in\cL^{(1, \infty)}(\cH)$ for $b\in\cA.$ 
 Assume $n=4.$
 We therefore obtain that the quatized curvature 
 \[
 \Theta=(\nabla^{\cE})^2\in\cL(\cH)
 \]
 admits a Dixmier trace:
 \[
 \Tr_{\omega}(\Theta^2)<\infty.
 \]
 See page 680 of~\cite{ConnesYM} and~\cite{ConnesNDG}. 
 We propose that a pre Connes-Yang-Mills  functional
  is given by a map: 
\[
 \nabla^{\cE}  \rightarrow \Tr_{\omega}(\Theta^2)
\]
 for every connection $\nabla^{\cE}$ on $\cE$, 
where $\Theta=(\nabla^{\cE})^2$ is the quatized curvature. 
 
\begin{remark}
Unlike the commutative case, we 
have no  space, and hence
do not have the surjective map $c: \Omega^1_F\rightarrow\Omega^1_{\cA}$ mapping $i[F, a]$ to $da$ as in~\cite{ConnesYM}.
 So we can not take minimum of the right hand side. Hence the name.
\end{remark}

 \section{Dixmier $\Gamma$ trace}

 Let $\mathcal{E}$ be a Hilbert $\mathcal{A}$ module, and 
$\cL(\mathcal{E})$ be the set of bounded $\mathcal{A}$-linear endomorphisms. 
Let $\mathcal{K}(\mathcal{E}) \subset \cL(\mathcal{E})$
 be the set of compact endomorphisms given by the norm closure of finite rank projections. 
  $\cE$ is reduced to a Hilbert space $\cH$ when 
$\mathcal{A}=\mathbb{C}.$

\subsection{Dixmier $\Gamma$-trace}
\label{sec:DixmierGamma-trace}

Dixmier $\Gamma$-trace is an instance of type II non commutative geometry. 
In~\cite{BF}, Dixmier trace was introduced on
  von Neumann algebras. 
 Our specific context here is to reformulate this.
Let $\mathcal{A}= C^*_r(\Gamma)$ be the reduced group $C^*$-algebra.
Denote by the canonical von Neumann trace:
\begin{equation}
\label{eq:vNtrace}
\tr: C^*_r(\Gamma)\rightarrow\C
\end{equation}
 determined by: 
\[
\sum_{\gamma\in \Gamma}a_{\gamma}\gamma\mapsto a_e.
\]
Let $\cE$ be a Hilbert $C^*_r(\Gamma)$-module. Denote
 by $\cK(\cE)$ the algebra of $\cA$-linear compact endomorphisms on $\cE$,
  and by $\cK(\cE)_{+}$ the subset of positive elements. 
  
  Kasparov's stabilization implies that there is a Hilbert module isomorphism
$\cE \oplus H \otimes C^*_r(\Gamma) \  \cong \ \cE$,
where $H$ is a separable Hilbert space.
Passing through this, a
compact endomorphism
 on a Hilbert module can be represented as an infinite matrix with entries in $C^*_r(\Gamma)$. 
Let: \[
\tr_{\Gamma}=\tr\circ\Tr: \cK(\cE)_+\rightarrow[0, \infty]
\] 
be the trace induced by~(\ref{eq:vNtrace}) , and extend it
to $\tr: \cK(\cE) \to \C$,
where $\Tr$ is the matrix trace. 
When $\tr_{\Gamma}(A)<\infty$, then $A$ is known as a $\Gamma$-trace class operator. 
Note that when $\Gamma$ is trivial, $A$ is a compact operator in the usual sense and $\tr_{\Gamma}(A)$ is the operator trace of $A.$ 

For $t\ge 0$ and an operator $A\in\cK(\cE)_+$, recall that the generalised singular value function: 
\[
\mu(A): [0,\infty)\rightarrow[0, \infty)
\] 
is given by: 
\[
\mu_t(A)=\inf\{s\ge 0: \tr_{\Gamma}(\chi_{(s, \infty)}(A))\le t\}
\] 
where $\chi$ is the characteristic function (see ~\cite{F, FK, GT}).

When $\Gamma$ is trivial, a compact positive operator $A$ on a Hilbert space  has  point spectrum only.
 In the case
 $\mu_n(A)$ is the $(n+1)$-th largest eigenvalue of $A$, 
 taken into account of multiplicity for $n\in\{0\}\cup\N$.
 More precisely we have:

\begin{lemma}\label{dim}
Let $\{\lambda_i\}_{i=0}^{\infty}$ be the set of eigenvalues of a compact positive operator $A$ on a Hilbert space, counted with multiplicity, decreasing to $0$ as $i\to\infty.$ 
Then: 
\[
\mu_t(A)=\lambda_i \qquad
\text{where} \qquad \sum_{j=0}^{i-1}\dim E_{\lambda_j}\le t< \sum_{j=0}^{i}\dim E_{\lambda_j}.
\]
Here $E_{\lambda_j}$ stands for the eigenspace of $\lambda_j.$
\end{lemma}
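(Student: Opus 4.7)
The plan is to specialize the definition of the generalized singular value function from Section~\ref{sec:DixmierGamma-trace} to the case of trivial $\Gamma$, where $\tr_{\Gamma}$ reduces to the ordinary operator trace and $\cK(\cE)_{+}$ becomes the cone of compact positive operators on a Hilbert space $\cH$. First I would interpret the indexing $\{\lambda_i\}_{i=0}^{\infty}$ in the statement as the list of \emph{distinct} eigenvalues in strictly decreasing order, so that the eigenspaces $E_{\lambda_j}$ are unambiguously defined and the sums $\sum_{j=0}^{i-1}\dim E_{\lambda_j}$ do not overcount; this is the only way the quantifier $\sum_j \dim E_{\lambda_j}$ in the conclusion is consistent.

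Next I would apply the spectral theorem for compact positive self-adjoint operators to write $A=\sum_{j\ge0}\lambda_j P_{\lambda_j}$, where $P_{\lambda_j}$ is the orthogonal projection onto $E_{\lambda_j}$. Then the spectral projection satisfies
\[
\chi_{(s,\infty)}(A)=\sum_{\lambda_j>s}P_{\lambda_j},
\qquad
\tr_{\Gamma}(\chi_{(s,\infty)}(A))=\sum_{\lambda_j>s}\dim E_{\lambda_j}.
\]
Setting $S_i:=\sum_{j=0}^{i-1}\dim E_{\lambda_j}$ (with $S_0=0$), this function of $s$ is a right-continuous, non-increasing step function of $s$ taking the value $S_i$ on the interval $[\lambda_i,\lambda_{i-1})$ (with the convention $\lambda_{-1}=+\infty$), and jumping up by $\dim E_{\lambda_i}$ as $s$ crosses $\lambda_i$ downward.

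Finally, given $t$ with $S_i\le t<S_{i+1}$, I would check the two-sided inequality in the definition of $\mu_t(A)$: for $s\ge \lambda_i$ the trace is at most $S_i\le t$, while for $s<\lambda_i$ the trace is at least $S_{i+1}>t$. Hence the infimum $\inf\{s\ge 0:\tr_{\Gamma}(\chi_{(s,\infty)}(A))\le t\}$ is attained exactly at $s=\lambda_i$, giving $\mu_t(A)=\lambda_i$ as claimed. The only nontrivial point, which is purely notational rather than analytic, is the reconciliation between the phrase ``counted with multiplicity'' and the formula involving $\dim E_{\lambda_j}$ indicated above; once that convention is fixed, the argument is a direct computation from the spectral decomposition and requires no further input.
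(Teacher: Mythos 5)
Your proof is correct and follows essentially the same route as the paper's: apply the spectral decomposition of $A$, compute $\tr(\chi_{(s,\infty)}(A))=\sum_{\lambda_j>s}\dim E_{\lambda_j}$, and read off the infimum from the resulting step function. Your explicit resolution of the notational tension between ``counted with multiplicity'' and the appearance of $\dim E_{\lambda_j}$ — reading $\{\lambda_i\}$ as the \emph{distinct} eigenvalues in strictly decreasing order — matches what the paper implicitly assumes (cf.\ ``Let $\lambda_1>\lambda_2>\cdots$ be the ordered set of eigenvalues'' in the proof of Proposition~\ref{prop:eq.torus.Dix}), and your two-sided infimum check simply makes explicit what the paper leaves to the reader.
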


\begin{proof}
By assumption, $A=\oplus_j\lambda_i E_i$. Then: 
\[
\chi_{(s, \infty)}(A)=\oplus_{\lambda_i>s} P_{E_{\lambda_i}}\qquad \text{and} \qquad \Tr(\chi_{(s, \infty)}(A))=\sum_{\lambda_i>s}\dim E_{\lambda_i}. 
\] 
By definition, we have: 
\[
\mu_t(A)=\inf\{s\ge0: \sum_{\lambda_j>s}\dim E_{\lambda_j}\le t\}
\]
which gives rise to the statement.
\end{proof}

Suppose  $P$ is a projection in $\cL(\mathcal{E})$ of $\Gamma$-trace class.
Then the $\Gamma$-rank or $\Gamma$-dimension is given by: 
\[
\rk_{\Gamma}P:=\tr_{\Gamma}(P) \ \in \ \mathbb{R}.
\] 
The rank does not have to be an integer.
For every $r>0$ and  a positive element $A\in\mathcal{K}(\mathcal{E})_+$,
let us  put:
 $$\delta_r^{\Gamma} (A) :=  \  
 \sup \ \{ \  \tr_{\Gamma} (AP) ; \ \rk_{\Gamma} (P) \le r \ \}$$
where the supremum is taken 
over all  projections $P$
with $\Gamma$-rank  at most $r$
 in $\mathcal{K}(\mathcal{E}).$
Observe the equality:
\[
\delta^{\Gamma}_r(A)=\int_0^r\mu_t(A)dt.
\] 
See Lemma $A.2$ in~\cite{BF} for more details.




For $A\in\cK(\cE)_+$, let us introduce a norm:  
\begin{align*}
\|A\|_{(1, \infty)} & :=\sup_{t>0}\frac{1}{\log(1+t)}\delta_t^{\Gamma}(A) 
\\
& =\sup_{t>0}\frac{1}{\log(1+t)}\int_0^t\mu_s(A)ds.
\end{align*}
We then have a subset of $\cK(\cE)$:
\[
\cL^{(1, \infty)}_+(\cE)=\{ \ T\in\cK(\cE)_+ : \|T\|_{(1, \infty)}<\infty \ \}
\]
and an ideal of $\cK(\cE)$:
$$\cL^{(1, \infty)}(\cE)=\{ \ T\in\cK(\cE): 
Re(T)_{\pm}, Im(T)_{\pm}\in\cL_{+}^{(1, \infty)}(\cE) \ \}.$$ 

A state $\omega$ defining the Dixmier $\Gamma$ trace 
 exists by the following lemma. See for example~\cite{BF, S}.

\begin{lemma}
There exists a state $\omega$ on $L^{\infty}(0, \infty)$ satisfying the following conditions:
\begin{itemize}
\item $\omega(C_0(0, \infty))=0;$
\item If $f$ is real-valued in $L^{\infty}(0, \infty)$, then 
\[ 
ess\liminf_{t\to\infty} f(t)\le\omega(f)\le ess\limsup_{t\to\infty}f(t);
\]
\item $\omega(f)=0$ vanishes, if
 the essential support of $f$ is compact;
\item  For any $a>0$, the equality:
$$\omega=\omega\circ D_a$$
holds, where 
 $D_a$ is  the dilation 
$
D_a(f)(x)=f(\frac{x}{a}) $
for $ f\in L^{\infty}(0, \infty)$.
\end{itemize}
\end{lemma}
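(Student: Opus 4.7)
The plan is to reduce the problem to the amenability of the multiplicative group $(0,\infty)^{\times}$, which is isomorphic to $(\mathbb{R},+)$ via $\log$. First, the change of variables $t = e^{s}$ induces an isometric $\ast$-isomorphism $L^{\infty}(0,\infty) \cong L^{\infty}(\mathbb{R})$; under this identification the dilation $D_{a}$ becomes translation $T_{\log a}$, and $\mathrm{ess}\,\limsup$ (resp.\ $\mathrm{ess}\,\liminf$) as $t\to\infty$ correspond to the same quantities as $s\to+\infty$. So it suffices to produce a state $\tilde\omega$ on $L^{\infty}(\mathbb{R})$ that is translation invariant and, on real-valued functions, sandwiched between $\mathrm{ess}\,\liminf_{s\to+\infty}$ and $\mathrm{ess}\,\limsup_{s\to+\infty}$.

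The first step is to get the sandwich property via a Hahn--Banach extension. The sublinear functional $p(g) := \mathrm{ess}\,\limsup_{s\to+\infty} g$ dominates the honest essential-limit functional on the subspace of those $g \in L^{\infty}(\mathbb{R})_{\mathbb{R}}$ for which $\mathrm{ess}\,\lim_{s\to+\infty} g$ exists. Extending this linear functional to all of $L^{\infty}(\mathbb{R})_{\mathbb{R}}$ while preserving the bound by $p$ produces a real-linear functional $\tilde\omega_{0}$; applying $p$ to both $g$ and $-g$ yields the two-sided sandwich. A standard complexification and positivity check shows that $\tilde\omega_{0}$ is a state.

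The second step promotes $\tilde\omega_{0}$ to a translation-invariant state. Fix any translation-invariant mean $\mu$ on $L^{\infty}(\mathbb{R})$ (this exists because $\mathbb{R}$ is amenable, by the classical Day/F{\o}lner construction), and set
\[
\tilde\omega(g) := \mu_{b}\bigl(\tilde\omega_{0}(T_{b} g)\bigr), \qquad T_{b} g(s) := g(s-b).
\]
Since $|\tilde\omega_{0}(T_{b} g)| \le \|g\|_{\infty}$, the integrand lies in $L^{\infty}(\mathbb{R})$ and $\tilde\omega$ is well defined. Translation invariance of $\mu$ gives $\tilde\omega(T_{c} g) = \tilde\omega(g)$. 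Because $T_{b}$ preserves $\mathrm{ess}\,\limsup$ and $\mathrm{ess}\,\liminf$ at $+\infty$, the quantity $\tilde\omega_{0}(T_{b} g)$ satisfies the sandwich \emph{uniformly} in $b$; monotonicity of the state $\mu$ then transports the sandwich to $\tilde\omega$.

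Finally, pull back by defining $\omega(f) := \tilde\omega(f\circ\exp)$. Dilation invariance $\omega\circ D_{a} = \omega$ follows from translation invariance of $\tilde\omega$ by $\log a$, and the sandwich transports directly. The two vanishing conditions are then automatic consequences of the sandwich: functions in $C_{0}(0,\infty)$ tend to zero at infinity, and functions of compact essential support are a.e.\ zero near $\infty$, so $\mathrm{ess}\,\liminf = \mathrm{ess}\,\limsup = 0$ in both cases. The main obstacle I anticipate is not analytical but conceptual: checking that positivity of the non-constructive mean $\mu$ is enough to propagate a two-sided $\mathrm{ess}\,\liminf/\mathrm{ess}\,\limsup$ bound through the averaging, and verifying carefully that dilation invariance on $L^{\infty}(0,\infty)$ corresponds to translation invariance on $L^{\infty}(\mathbb{R})$ under the exponential substitution with the correct direction of composition.
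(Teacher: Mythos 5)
The paper does not prove this lemma; it defers to the references \cite{BF} and \cite{S} (``See for example \ldots''). Your proof reconstructs the standard argument behind those references: pass to $L^{\infty}(\mathbb{R})$ by $t=e^{s}$, obtain a state dominated by $\mathrm{ess}\,\limsup$ via Hahn--Banach, and make it dilation-invariant by averaging over an invariant mean coming from amenability of $\mathbb{R}$. The overall structure is correct, and both concerns you raise at the end are in fact unproblematic: positivity and unitality of the mean $\mu$ transport a uniform-in-$b$ sandwich through the average, and the exponential substitution does convert $D_{a}$ into $T_{\log a}$ with the stated direction.

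The genuine gap you should address is measurability. The Hahn--Banach extension $\tilde\omega_{0}$ is produced by Zorn's lemma and is not weak-$*$ continuous, so for a fixed $g\in L^{\infty}(\mathbb{R})$ the bounded function $b\mapsto\tilde\omega_{0}(T_{b}g)$ has no reason to be Lebesgue measurable. As written, you feed it to an invariant mean $\mu$ ``on $L^{\infty}(\mathbb{R})$,'' whose elements must be measurable; that step is not justified. The standard fix, consistent with the Day construction you invoke, is to take $\mu$ to be an invariant mean on $\ell^{\infty}(\mathbb{R})$ (all bounded functions, with $\mathbb{R}$ viewed as a discrete group, which is amenable because it is abelian). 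Then $b\mapsto\tilde\omega_{0}(T_{b}g)$ lies in the domain of $\mu$ with no measurability hypothesis, and the rest of your argument (linearity, positivity, unitality, translation invariance of $\tilde\omega$, and the sandwich and vanishing properties of the pulled-back $\omega$) goes through verbatim. With that adjustment the proof is complete and matches the construction used in the cited sources.
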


Again, denote the evaluation of $\omega$ at $f\in L^{\infty}(0, \infty)$ by: 
\[
\omega-\lim_{r\to\infty}f(r):= \ \omega(f).
\]
Let $A$ be a positive operator in $\mathcal{K}(\mathcal{E})$.
The absolute Dixmier $\Gamma$   trace on $\mathcal{L}^{(1,\infty)} (\mathcal{E})$
 is given by:
$$\Tr_{\omega}^{\Gamma} (A) :=  \ \omega- \lim_{r \to \infty} \  
\frac{1}{\log (r+1)} \ \delta_r^{\Gamma}(A).$$

\begin{definition}
Let $A$ be a self adjoint operator.
The Dixmier $\Gamma$   trace:
 $$\Tr_{\omega}^{\Gamma}: \cL^{(1, \infty)}(\cE)\rightarrow\C$$ is defined by:
$$\Tr_{\omega}^{\Gamma} (A) = 
\Tr_{\omega}^{\Gamma} (A_+) - \Tr_{\omega}^{\Gamma} (A_-)
.$$
\end{definition}
For  arbitrary $A$, it is given by:
$$\Tr_{\omega}^{\Gamma} (A) = 
\Tr_{\omega}^{\Gamma} (\text{Re }  A) + i  \Tr_{\omega}^{\Gamma} (\text{im } A).
$$

\section{Higher Yang-Mills instantons}

Let $X$ be an oriented closed smooth $4$-manifold with a spin structure. 
Let $\tilde X$ be a $\Gamma$-cover of $X$ with covering group $\Gamma$.
For example, 
$\Gamma= \pi_1(X)$ is the fundamental group and $\tilde X$ is the universal cover.
Let $\gamma=\tilde X\times_{\Gamma}C^*_r(\Gamma)\rightarrow X$ be the Mischenko-Fomenko
 line bundle with flat connection $\nabla^{\gamma}$. 
Let $S \to X$ be
 the spinor bundle and denote its lift by $\tilde{S}$  
 on the universal covering space  $\tilde X$. 
Let $E\rightarrow X$ be a unitary bundle equipped with a unitary
connection $A=\nabla^E.$ 

Consider:
\begin{align*}
 & \cE_0=C^{\infty}(X, \tilde E\times_{\Gamma}C^*_r(\Gamma))
 = C^{\infty}(X,  E\otimes \gamma),
  \\
 & \cA=C^{\infty}(X), \\
 & \cH=L^2(X, S) 
 =L^2(X,  S^+)
 \oplus
 L^2(X,  S^-)
 \end{align*}
 where $\cE_0$ is 
a finitely generated 
$(\cA, C^*_r(\Gamma))$ bi-module
 in the sense that it is 
 a left module over $C^{\infty}(X)$ and is a right module over $C^*_r(\Gamma).$ 
 $\cH$ is 
 a $\cA$ bi-module.
 
 Therefore, we form the Hilbert $C^*_r(\Gamma)$-module: 
\begin{align*}
\cE=\cE_0\otimes_{\cA}\cH & =C^{\infty}(X, E \otimes \gamma)
\otimes_{C^{\infty}(X)} 
L^2(X,  S)\\ 
& = L^{2}(X, E\otimes  S\otimes
\gamma).
\end{align*} 
Let $\Omega^1_{\cA}$ be the space of smooth $1$-forms of $\cA$:
\[
\Omega^1_{\cA}=\{ \ \sum adb|a, b\in\cA \ \}=C^{\infty}(X, T^* X).
\]

Let $D: \cH\rightarrow\cH$ be the Dirac operator on $S$. 
Locally $D$ has the form $\sum_{i}r(e_i)\nabla^{\cH}_{e_i}$, where 
$\{e_i\}_i$ is a local orthonormal frame and
$r(e_i)$ is the Clifford multiplication by $e_i$.
$\nabla^{\cH}$ is the  spin  connection so that:
\[
[\nabla^{\cH}_{e_i}, r(\alpha)]=r(\nabla_{e_i}\alpha)\qquad \alpha\in\Omega^1_{\cA}
\]
holds.
Then it can be checked the equality:
\[
[D, f]=r(d f ) \qquad f\in \cA.
\]
Observe that a one form in $\Omega_{\cA}^1$ acts on $\cH$ as a 
bounded operator
via the inclusion:
\[
r: \Omega_{\cA}^1\hookrightarrow \cL(\cH), \qquad
r(adb)=a[D, b].
\]

The space of connections $A=\nabla^{\cE_0}$ on $\cE_0$ 
forms an affine space over: 
\[
C^{\infty}(X, T^*X\otimes E\otimes_{Ad\Gamma}\End \gamma)
\]
where $\Gamma$ acts on $\End \gamma$ by conjugation.

Assume that ker $D^+=0$ holds, and let:
 \[
F=D^+|D^+|^{-1}
\] 
be the unitary part  of $D$. Introduce the quantized differential by:
\[
\hat d a:= \ i [F, a] \qquad a\in \cA
\]    
and denote by $\Omega^1_F$ the space of quantized $1$-forms:
\[
\Omega^1_F=\{ \ \sum a\hat d b\in \cL(\cH)|a, b\in\cA \ \}.
\]
Similarly, one has the quantized differential forms of degree $k$:
\[
\Omega^k_F=\{ \ \sum a^0\hat da^1\cdots \hat da^k\in\cL(\cH)| a^j\in\cA  \ \}
\]
and their differentials:
\[
\alpha\in\Omega^k_F \ \rightarrow 
\ \hat d\alpha:= \ i(F\alpha-(-1)^k\alpha F) \ \in \ \Omega^{k+1}_F.
\] 

\begin{lemma}\cite{ConnesYM}
\label{lem:bimod.c}
$(1)$
There exists a unique bimodule linear map
which satisfies the property:
\[
c: \Omega^1_F\rightarrow \Omega^1_{\cA}\qquad \hat d a\mapsto da.
\]
This map is surjective and the image of self adjoint elements of $\Omega^1_F$ 
 are the real forms.
 
 $(2)$ 
 For $\alpha\in\Omega^1_{F},$ we have 
\[
Ac(\hat d \alpha )=d[c(\alpha)]
\]
where $A$ stands for the projection onto the anti-symmetric tensor.
\end{lemma}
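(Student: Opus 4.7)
The plan is to define $c$ on generators by $c(a\hat{d}b) := a\, db$ and extend $\cA$-bilinearly; the crux is well-definedness. Since $F = D|D|^{-1}$ is the sign of the Dirac operator $D$ and $[D,b] = r(db)$ is Clifford multiplication, a direct calculation shows that $\hat{d}b = i[F,b]$ is a pseudodifferential operator of order $-1$ whose principal symbol $\sigma_{-1}(\hat{d}b)(x,\xi)$ is a universal $\xi$-homogeneous expression built linearly from $db(x)$ (essentially a Poisson bracket of the $0$-th order symbol $ic(\hat{\xi})$ of $F$ against $b$). Consequently, if $\sum_{i} a_i\hat{d}b_i = 0$ in $\cL(\cH)$, then passing to principal symbols forces $\sum_{i} a_i(x)\, db_i(x) = 0$ at every $x \in X$, so $c$ is well-defined and automatically $\cA$-bilinear. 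Surjectivity is immediate from $c(-ia[F,b]) = a\, db$. The reality statement follows from $F = F^{*}$: the involution on $\Omega^1_F$ inherited from $\cL(\cH)$ corresponds under $c$ to complex conjugation on $\Omega^1_{\cA}$, so self-adjoint elements of $\Omega^1_F$ biject with real-valued $1$-forms.

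For part $(2)$, first extend $c$ to $\Omega^2_F$ by the rule $c(\hat{d}a^0 \cdot \hat{d}a^1) := da^0 \otimes da^1 \in C^{\infty}(X, T^*X \otimes T^*X)$, well-definedness following from the same principal symbol analysis applied at order $-2$. For $\alpha = \sum a^0\hat{d}a^1 \in \Omega^1_F$, the graded Leibniz rule together with $\hat{d}^2 = 0$ on generators yields $\hat{d}\alpha = \sum \hat{d}a^0 \cdot \hat{d}a^1$, so $c(\hat{d}\alpha) = \sum da^0 \otimes da^1$. Antisymmetrizing then gives
\[
 A\, c(\hat{d}\alpha) \; = \; \sum da^0 \wedge da^1 \; = \; d\Bigl(\sum a^0\, da^1\Bigr) \; = \; d[c(\alpha)],
\]
as claimed (with normalization conventions chosen as in \cite{ConnesYM}).

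The main obstacle throughout is the well-definedness of $c$: by construction $\Omega^1_F$ is a span of bounded operators on $\cH$, and there is no a priori reason that a vanishing operator sum should originate from a vanishing classical form. Overcoming this rests on the principal symbol of $[F,\cdot]$ faithfully recording the classical differential, a feature coming from $F$ being the sign of a first-order elliptic operator whose principal symbol is Clifford multiplication by a cotangent vector. Once this bridge between the quantized and classical calculi is in place, the remaining content of the lemma reduces to formal manipulation in the style of Connes' quantized differential forms.
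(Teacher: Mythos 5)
Your proposal is correct and follows essentially the approach the paper itself outlines in Remark~\ref{rmk:Acd}: well-definedness of $c$ (on both $\Omega^1_F$ and $\Omega^2_F$) comes from reading off the classical form via the principal symbol of the order $-1$ operator $\hat{d}a$, namely $\sigma_{-1}(\hat{d}a) = r(da - \langle\xi,da\rangle\xi)$ and the injectivity of $\eta \mapsto \eta - \langle\xi,\eta\rangle\xi$, and part $(2)$ then reduces to the graded Leibniz rule with $F^2 = 1$ plus antisymmetrization. The paper cites \cite{ConnesYM} for the proof rather than giving one, but the remark that follows is precisely the symbol-calculus bridge you identify as the crux.
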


\begin{remark}
\label{rmk:Acd}
Recall~\cite{ConnesYM} the meaning of $c$ on $\Omega^2_F$
 in  lemma \ref{lem:bimod.c} 
 and its relaltion to the principal symbol. 
The principal symbol of the pseudo differential operator $\hat d a (a\in\cA)$ of order $-1$
 is given by:
\[
\sigma_{-1}(\hat d a)=r(da-\langle\xi, da\rangle \xi)\in C^{\infty}(S^*X, \Cl(T^*X)).
\]
Notice that the assignment:
\[
da\mapsto da-\langle\xi, da\rangle\xi 
\]
induces a linear  injection:
\[
C^{\infty}(X, T^*X) \hookrightarrow
C^{\infty}(S^*X, \Cl(T^*X)).
\]
In particular $\sigma_{-1}(\hat d a)$ and hence $\hat{d}a$
uniquely determines $da$.
This gives a well defined linear map $c$ on $\hat da\in\Omega^1_F$
with 
$c(\hat da)=da$, 
which 
coincides with $\sigma_{-1}(\hat da)$   up to this identification.    

Then Connes defined $c: \Omega^2_F\rightarrow C^{\infty}(X, (T^*X)\otimes(T^*X))
$ by the image of $\sigma_{-2}$ in $C^{\infty}(S^*X, \Cl(T^*X))$
using the homomorphism:
\begin{equation}
\label{eq:pi}
\pi: C^{\infty}(X, (T^*X)\otimes (T^*X))\rightarrow C^{\infty}(S^*X, \Cl(T^*X))
\end{equation}
determined by $\eta_i\otimes \eta_2 \mapsto r(\eta_1-\langle\xi, \eta_1\rangle)r(\eta_2-\langle\xi, \eta_2\rangle)$ for $\eta_1, \eta_2\in T^*_xX$ and $\xi\in S_x^*X$.
\end{remark}

A q-connection $\nabla^{\cE_0}$ on $\cE_0$ is  given by a linear map: 
\[
\nabla^{\cE_0}: \cE_0\rightarrow\cE_0\otimes_{\cA}\Omega^1_F
\]
such that
$
\nabla^{\cE_0}(\xi\cdot x)=(\nabla^{\cE_0}\xi)\cdot x+\xi\otimes \hat d x
$ holds
for $\xi\in\cE_0$ and $x\in\cA.$
Its 
curvature:
$$\Theta:=(\nabla^{\cE_0})^2: \cE_0
\rightarrow \cE_0 \otimes_{\cA}\Omega^2_{F}
$$
is an $\cA$ module. Hence it induces an endomorphism:
$$\Theta: \cE = \cE_0 \otimes_{\cA} \cH \to \cE$$
by contraction.

Every $q$-connection $\nabla^{\cE_0}: \cE_0\rightarrow\cE_0\otimes_{\cA}\Omega^1_F$ 
determines uniquely a classical connection $\nabla_c^{\cE_0}: \cE_0\rightarrow\cE_0\otimes_{\cA}\Omega^1_{\cA}$ by composition with the bimodule map: 
\[
\nabla_c^{\cE_0}:=(1\otimes c)\circ\nabla^{\cE_0}.
\]

Denote the von Neumann trace by:
 $$\tau(T)=\langle T\delta_e, \delta_e\rangle$$ 
 for  an element $T$ of $\End_{C^*_{r}(\Gamma)}(\gamma)$.
 
\begin{lemma}
\label{lem:Acdc}
The curvature $\Theta_c$ associated to the connection: 
\begin{align*}
\nabla_c^{\cE_0}:
 C^{\infty} & (X,  E\otimes \gamma)   \to
  C^{\infty}(X, T^*X\otimes E\otimes \gamma)
\end{align*}
is the antisymmetric part of the curvature $\Theta$ associated to the $q$-connection $\nabla^{\cE_0}$: 
\[
\Theta_c:=(\nabla_c^{\cE_0})^2=Ac(\Theta).
\]
Here $A$ acts on $T^*X\otimes T^*X$ and does nothing to $\End_{C^*_r\Gamma}(\gamma).$
\end{lemma}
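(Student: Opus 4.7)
The plan is to reduce to a local computation in a trivializing frame for $\cE_0$, express both $\Theta$ and $\Theta_c$ in Cartan form $d\omega+\omega\cdot\omega$, then match the resulting expressions term by term using Lemma \ref{lem:bimod.c}(2) together with the principal-symbol description of $c$ recorded in Remark \ref{rmk:Acd}.

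First, choose an open set $U\subset X$ over which $\cE_0$ admits a smooth frame $\{e_1,\dots,e_n\}$. Writing $\nabla^{\cE_0}e_i=\sum_j e_j\otimes\omega^j_i$ for unique one-forms $\omega^j_i\in\Omega^1_F$, and extending the $q$-connection to $\cE_0\otimes_{\cA}\Omega^*_F$ by the graded Leibniz rule for $\hat d$, the standard Cartan bookkeeping gives
\[
\Theta(e_i)=\sum_{k}e_k\otimes\Big(\hat d\omega^k_i+\sum_j\omega^k_j\cdot\omega^j_i\Big).
\]
Since $\nabla_c^{\cE_0}=(1\otimes c)\circ\nabla^{\cE_0}$, the associated classical connection one-forms are $c(\omega^j_i)\in\Omega^1_{\cA}$, and the same computation performed classically yields
\[
\Theta_c(e_i)=\sum_{k}e_k\otimes\Big(d\,c(\omega^k_i)+\sum_j c(\omega^k_j)\wedge c(\omega^j_i)\Big).
\]

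The comparison of the two expressions under $Ac$ is then made termwise. The degree-one term is immediate from Lemma \ref{lem:bimod.c}(2): $Ac(\hat d\omega^k_i)=d\,c(\omega^k_i)$. The crux of the proof is the quadratic identity
\[
Ac(\alpha\cdot\beta)=c(\alpha)\wedge c(\beta)\qquad(\alpha,\beta\in\Omega^1_F),
\]
which I would obtain through Remark \ref{rmk:Acd}: principal symbols multiply, so $\sigma_{-2}(\alpha\beta)$ equals the Clifford product $r(c(\alpha)-\langle\xi,c(\alpha)\rangle\xi)\,r(c(\beta)-\langle\xi,c(\beta)\rangle\xi)$, which is precisely the image under the map $\pi$ of (\ref{eq:pi}) of $c(\alpha)\otimes c(\beta)\in C^\infty(X,T^*X\otimes T^*X)$. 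Hence $c(\alpha\beta)$ agrees with $c(\alpha)\otimes c(\beta)$ modulo $\ker\pi$, and because $\ker\pi$ is supported on the symmetric part of $T^*X\otimes T^*X$, applying the antisymmetrizer $A$ kills this ambiguity. Summing over $j$ and reassembling produces $Ac(\Theta)(e_i)=\Theta_c(e_i)$ on $U$. Since all of the data involved are frame-independent, the equality extends globally.

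The main obstacle is precisely the quadratic identity $Ac(\alpha\cdot\beta)=c(\alpha)\wedge c(\beta)$: verifying it requires unwinding the principal-symbol definition of $c$ on $\Omega^2_F$, identifying the indeterminacy in lifting $\sigma_{-2}$ back through $\pi$ with symmetric two-tensors, and checking that antisymmetrization removes precisely that ambiguity. The remaining steps are direct analogues of the classical Cartan structure-equation calculation.
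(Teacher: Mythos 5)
Your proof is correct and follows the same strategy as the paper's — reduce to Cartan's structure equation and compare under $Ac$ using Lemma~\ref{lem:bimod.c} together with the principal-symbol description of $c$ — but it is carried out more carefully, and the extra care is not cosmetic. The paper's argument "verifies the equality for a component of $\nabla^{\cE}$ of the form $\omega=a\hat{d}b$", which amounts to the rank-one case; there the $\omega\cdot\omega$ term degenerates to $a^{2}\hat{d}b\,\hat{d}b$, whose image under $Ac$ is forced to vanish because $db\wedge db=0$. This special case never tests the genuine quadratic behavior. Your frame computation keeps the full matrix of one-forms $\omega^k_j$ and correctly identifies that the cross-terms $\sum_j\omega^k_j\cdot\omega^j_i$ require the identity $Ac(\alpha\cdot\beta)=c(\alpha)\wedge c(\beta)$ for \emph{arbitrary} $\alpha,\beta\in\Omega^1_F$, which is not a consequence of Lemma~\ref{lem:bimod.c}(2) and is silently used in the paper.

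Your derivation of that identity from symbol multiplicativity is sound, and your claim that antisymmetrization kills the ambiguity in lifting $\sigma_{-2}$ through the map $\pi$ of~(\ref{eq:pi}) can be made precise: writing the pointwise Clifford product as $r(\eta_1')r(\eta_2')=\eta_1'\wedge\eta_2'-\langle\eta_1',\eta_2'\rangle$ shows that $\pi$ sends antisymmetric tensors into the $\Lambda^2$ grade and symmetric tensors into the scalar grade of $\Cl(T^*X)$, and a short averaging argument over $\xi\in S^{n-1}$ shows $\pi$ is injective on $\Lambda^2T^*X$; hence $\ker\pi$ lies entirely in the symmetric part, as you assert. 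In short, you identify and close a gap in the published proof rather than merely paraphrasing it.

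One small caution: the step $\hat{d}b\cdot a'=a'\hat{d}b+\cdots$ does not literally commute in $\Omega^1_F$, so when you expand $\omega^k_j\cdot\omega^j_i$ for $\omega=\sum a\hat{d}b$ you should invoke the bimodule property of $c$ from Lemma~\ref{lem:bimod.c}(1) to move coefficients across, rather than treating the $\hat{d}b$'s as commuting with $\cA$; this is implicit in your use of "principal symbols multiply" and is fine, but worth flagging explicitly if you write this up.
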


\begin{proof}
The proof essentially follows from Lemma~\ref{lem:bimod.c}.
We only need to verify the equality for a component of $\nabla^{\cE}$ of the form $\omega=a\hat d b\in \Omega^1_{F}$. The quatized curvature $\Theta$ can be represented by: 
\[
d\omega+\omega\otimes \omega=\hat d a\hat d b+a^2\hat d b\hat d b. 
\]
The antisymmetric part of $c(\Theta)$ will have the form: 
\[
Ac(\hat d a\hat d b+a^2\hat d b\hat d b)=A(da\otimes db)+A(a^2db\otimes db)=da\wedge db.
\]
On the other hand, the component of $\Theta_c$ is: 
\[
d[c(\omega)]=d(adb)=da\wedge d b. 
\]
The lemma is then proved. 
\end{proof}


Let $A_c$ be a unitary connection on $E$.
It induces  a connection $\mathbb{A}_c$ on $E \otimes \gamma$:
\[
\mathbb{A}_c: C^{\infty}(X, E\otimes\gamma)\rightarrow C^{\infty}(X, T^*X\otimes
E\otimes\gamma  )
\] 
 Replacing $1$-forms  appeared in $A_c$, $\mathbb{A}_c$ 
 respectively by $[F, \alpha]$, we obtain two quantized connections:
\begin{align*}
 &A \text{ on  } C^{\infty}(X,E), \qquad 
 \mathbb{A} \text{ on  }  C^{\infty}(X,E \otimes \gamma )=
 \cE_0.
\end{align*}
Both the curvatures  $\theta=A^2$ and $ \Theta=\mathbb{A}^2$ 
are pseudo differential operators of order $-2$.  

$\Theta: L^2(X, E\otimes S\otimes\gamma)\rightarrow L^2(X, E\otimes S\otimes\gamma)$ is a pseudodifferential operator of order $-2$. Its principal symbol $\sigma(\Theta)$ is an element of 
\[
C^{\infty}(S^*X, \End(\pi^*(E\otimes S\otimes \gamma)))=C^{\infty}(S^*X, \mathrm{Cl}(T^*X)\otimes\End\pi^*(E\otimes\gamma)).
\] 
Thus, this gives rise to 
\[
c(\Theta)\in C^{\infty}(X, T^*X\otimes T^*X\otimes\End E\otimes\End_{C^*_r\Gamma}(\gamma)).
\]
Let $\Theta_{ij}$ be the decomposition of $\Theta$ according to $\End E$. So 
\[
c(\Theta_{ij})\in C^{\infty}(X, T^*X\otimes T^*X\otimes\End_{C^*_r\Gamma}(\gamma)).
\]
Denote by 
\[
\|c(\Theta_{ij})\|\in C^{\infty}(X, \End_{C^*_r\Gamma}(\gamma))
\] 
the $C^{\infty}(X, \End_{C^*_r\Gamma}(\gamma))$-valued norm.  

\begin{lemma}
\label{lem}
 Let $X$ be a closed, oriented, spin, smooth $4$-manifold with $b_1=4$. Let $\Gamma=\Z^4$ be a covering group of $X$. (Existance of a $\Z^4$-cover is ensured by $b_1=4$.)  
Then the formula:
\[
\Tr_{\omega}^{\Gamma}(\Theta^2)=\sum_{i,j}\int_{X} \tau(\|c(\Theta_{ij})\|^2)
\]
holds,
where $\Theta_{ij}$ is  local representation 
by the  $\End_{C^*_r\Gamma}(\gamma)$ valued matrix.
\end{lemma}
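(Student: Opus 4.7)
The plan is to reduce the identity to a $\Gamma$-equivariant Wodzicki residue formula, exploiting that $\Theta^2$ is a classical pseudodifferential operator of order $-4$ with coefficients in the flat Mischenko-Fomenko bundle $\gamma$. Since $\dim X = 4$ and $\Theta \in \Psi^{-2}$, the composition $\Theta^2 \in \Psi^{-4}$ is of critical order, so its Dixmier $\Gamma$-trace is well defined. The strategy is analogous to Connes' computation in the commutative case, but with $\tau$ taking the place of scalar trace along the $\End_{C^*_r(\Gamma)}(\gamma)$-fibers.

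First, I would establish a $\Gamma$-equivariant Connes trace formula: for a $\Gamma$-invariant operator $\tilde P \in \Psi^{-4}$ on $\tilde X$ lifting $P$,
\[
\Tr_{\omega}^{\Gamma}(P) \;=\; \frac{1}{4(2\pi)^4} \int_{S^*X} \tau\bigl(\tr \,\sigma_{-4}(P)\bigr)\, d\nu,
\]
where $\sigma_{-4}(P) \in C^{\infty}(S^*X, \End(E) \otimes \End_{C^*_r(\Gamma)}(\gamma))$. The abelian case $\Gamma = \Z^4$ makes this tractable: Fourier decomposition gives $C^*_r(\Z^4) \cong C(\hat\Gamma)$ with $\hat\Gamma \cong T^4$, and the family $\{P_{\rho}\}_{\rho \in \hat\Gamma}$ of ordinary pseudodifferential operators on $X$ twisted by flat line bundles $L_\rho$ allows the classical Connes formula to be applied fiberwise; integration over $\hat\Gamma$ with respect to normalized Haar measure reproduces $\tau$.

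Next, I would use Remark \ref{rmk:Acd} to express $\sigma_{-2}(\Theta)$ as $\pi(c(\Theta))$, where $\pi$ is the Clifford-valued map sending $\eta_1 \otimes \eta_2$ to $r(\eta_1 - \langle \xi,\eta_1\rangle\xi) r(\eta_2 - \langle\xi,\eta_2\rangle\xi)$. Multiplicativity of the principal symbol then gives $\sigma_{-4}(\Theta^2)(x,\xi) = \pi(c(\Theta))(x,\xi)^2$ on each fiber. Decomposing along $\End(E)$, the fiber integrand becomes a sum over $i,j$ of Clifford-traces of $\pi(c(\Theta_{ij}))^2$, with coefficients in $\End_{C^*_r(\Gamma)}(\gamma)$.

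The remaining step is the fiber integration. For each $x$, the angular integral
\[
\int_{S^*_xX} \tr_{\Cl}\bigl(\pi(c(\Theta_{ij}))(x,\xi)\bigr)^2\, d\xi
\]
is, by the standard linear-algebraic identity $r(\eta)^2 = -|\eta|^2$ combined with the projection $\eta \mapsto \eta - \langle\xi,\eta\rangle\xi$, a fixed multiple of $\|c(\Theta_{ij})(x)\|^2$, the $\End_{C^*_r(\Gamma)}(\gamma)$-valued norm squared on $T^*X \otimes T^*X$. Applying $\tau$ and matching the overall constant against the normalization $\tfrac{1}{4(2\pi)^4}$ yields the stated formula; I expect the combinatorial constants to cancel as they do in Connes' original argument on page $680$ of \cite{ConnesYM}, so this step amounts to bookkeeping.

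The main obstacle is the first step: justifying the $\Gamma$-equivariant Connes trace formula as an identity between the Hilbert-module Dixmier $\Gamma$-trace (defined via a continuous singular-value function $\mu_t$) and a residue-type expression on $X$ that involves $\tau$. The key input is that for $\Gamma = \Z^4$ abelian, the Hilbert $C^*_r(\Gamma)$-module $\cE$ is isomorphic via Fourier transform to a continuous field $\{L^2(X, E \otimes S \otimes L_\rho)\}_{\rho \in \hat\Gamma}$; the $\Gamma$-trace $\tr_{\Gamma}$ reduces to integration of the ordinary trace against Haar measure on $\hat\Gamma$, compatibility with $\mu_t$ follows from the standard spectral identification, and the classical Connes formula applied uniformly in $\rho$ completes the reduction.
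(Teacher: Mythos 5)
Your proposal is correct and follows the same overall line as the paper: apply a $\Gamma$-version of Connes' trace formula to $\Theta^2$, express $\sigma_{-2}(\Theta)$ via the map $\pi$ of Remark \ref{rmk:Acd} as $\pi(c(\Theta))$, decompose along $\End(E)$, and reduce the angular integral over $S^*_xX$ to the $\End_{C^*_r(\Gamma)}(\gamma)$-valued norm of $c(\Theta_{ij})$ exactly as in Connes' original bookkeeping. Where you diverge is in how you propose to justify the $\Gamma$-equivariant trace formula itself: you suggest decomposing $C^*_r(\Z^4) \cong C(\hat\Gamma)$ by Fourier transform, applying the classical Connes formula to each member of the family $\{P_\rho\}_{\rho \in \hat\Gamma}$, and integrating over $\hat\Gamma$ to recover $\tau$. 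The paper instead proves the $\Gamma$-trace formula (Proposition~\ref{prop:gammaConnesTr}) more abstractly, by showing $\Tr_\omega^\Gamma$ descends to a positive linear functional on principal symbols, invoking the Riesz--Markov theorem and uniqueness of the $SO(n+1)$-invariant measure on $S^*S^n$, and only then pinning down the normalization constant by an explicit Fourier/$\mu_t$ computation on the flat torus (Proposition~\ref{prop:eq.torus.Dix}). Your fiberwise Fourier route is more direct and concrete in the abelian case, but the step you characterize as "compatibility with $\mu_t$ follows from the standard spectral identification" is precisely where the real work lies: one must show that the Dixmier $\Gamma$-trace, defined via the module-level generalized singular value function $\mu_t$, agrees with the $\rho$-integral of the ordinary Dixmier traces $\Tr_\omega(P_\rho)$, and the paper's torus computation shows this requires a genuine argument (estimating $\int_{\hat\Gamma} N_{\alpha,\|x\|} d\alpha \sim \Vol B_{\|x\|}$ and tracking the singular-value asymptotics); treating it as a one-line remark would be too quick. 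Both routes arrive at the same formula, and your approach would also succeed if that step were filled in with the care the paper gives it.
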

  
\begin{proof}
The quatized curvature $\Theta$ is a pseudo differential operator of order $-2$ and is self adjoint, i.e., $\Theta_{ij}^*=\Theta_{ji}$ (similar argument as in~\cite{ConnesYM}).
Denote $P=\Theta^2=\Theta\Theta^*$. Then
\[
\sigma_{-4}(P)=\sigma_{-2}(\Theta)\sigma_{-2}(\Theta)^*
\]
with 
\[
\Tr_E(\sigma_{-2}(\Theta)\sigma_{-2}(\Theta)^*)=\sum_{i,j}\sigma_{-2}(\Theta_{ij})\sigma_{-2}(\Theta_{ij})^*
\]
which is identified with $\sum_{i, j}c(\Theta)c(\Theta)^*$ under $\pi$ in (\ref{eq:pi}). See Remark~\ref{rmk:Acd}.
Thus, from the trace formula~(\ref{eq:Connes.trace}),
we have 
\begin{align*}
\Tr^{\Gamma}_{\omega}(\Theta^2)=&\int_{S^*X}\tau\circ\Tr_S\Tr_E\sigma_{-4}(\Theta^2)(x, \xi)dxd\xi\\
=&\sum_{i,j}\int_X\tau\circ\Tr_S(c(\Theta_{ij})c(\Theta_{ij})^*)dx\\
=&\sum_{i,j}\int_X\tau(\|c(\Theta_{ij})\|^2)dx.
\end{align*}
Note that $\mathrm{rank}[H_1(X; \Z)]=4$ is used here to ensure $X$ a $\Gamma=\Z^4$ cover. 
So we obtain the equality:
\[
\Tr^{\Gamma}_{\omega}(\Theta^2)=\sum_{i,j}\int_X \tau(\|c(\Theta_{ij})\|^2). 
\]
\end{proof}
  
  For a $4$-manifold $X$ with a $\Gamma=\Z^4$ cover, we obtain a $\Gamma$-trace formula 
  for  the higher Connes-Yang-Mills  functional:
\begin{theorem}
\label{thm:inf}
 Let $X$ be a closed, oriented, spin, smooth $4$-manifold with $b_1=4$. Let $\Gamma=\Z^4$ be a covering group of $X$. (Existance of a $\Z^4$-cover is ensured by $b_1=4$.)  Let $\Theta_c=\mathbb A_c^2$ 
be the cuarvature of the
classical connection associated to
a quatized connection $\mathbb A$, and put
the  quatized curvature by $\Theta=\mathbb A$.  
Then the formula holds:
\[
\inf_{c(\mathbb A)=\mathbb A_c}\Tr_{\omega}^{\Gamma}(\Theta^2)=\sum_{i,j}\int_{X}\tau(\|(\Theta_{c})_{ij}\|^2).
\]
\end{theorem}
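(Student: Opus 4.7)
The plan is to reduce the problem using Lemma~\ref{lem} and then carry out the minimization explicitly. By Lemma~\ref{lem}, for any quantized connection $\mathbb A$ lifting $\mathbb A_c$ one has
\[
\Tr_{\omega}^{\Gamma}(\Theta^2) = \sum_{i,j}\int_X \tau(\|c(\Theta_{ij})\|^2),
\]
so the task is to minimize this integrand over the choice of lift. Lemma~\ref{lem:Acdc} asserts $Ac(\Theta) = \Theta_c$, so the antisymmetric part of $c(\Theta)$ is determined by $\mathbb A_c$ alone, and only the symmetric part remains available for optimization.

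Next, I would decompose the $\End_{C^*_r\Gamma}(\gamma)$-valued two-tensor pointwise as
\[
c(\Theta_{ij}) = Ac(\Theta_{ij}) + Sc(\Theta_{ij}) = (\Theta_c)_{ij} + Sc(\Theta_{ij}),
\]
and use the orthogonal splitting $T^*X\otimes T^*X = \Lambda^2 T^*X \oplus S^2 T^*X$ to obtain the pointwise identity
\[
\|c(\Theta_{ij})\|^2 = \|(\Theta_c)_{ij}\|^2 + \|Sc(\Theta_{ij})\|^2.
\]
Applying $\tau$, summing over $i,j$ and integrating over $X$ then yields
\[
\Tr_{\omega}^{\Gamma}(\Theta^2) = \sum_{i,j}\int_X \tau(\|(\Theta_c)_{ij}\|^2) + \sum_{i,j}\int_X \tau(\|Sc(\Theta_{ij})\|^2),
\]
in which the first summand is independent of the lift and the second is non-negative, so the theorem reduces to showing that the infimum over all lifts of the second summand vanishes.

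This last step is the main obstacle: one must exhibit a minimizing sequence of lifts whose curvatures annihilate the symmetric part in the limit. Following Connes~\cite{ConnesYM}, my approach is to exploit the freedom in the lift $\mathbb A$: any $\alpha \in \ker c \cap \Omega^1_F$ can be added to $\mathbb A$ without altering $c(\mathbb A) = \mathbb A_c$. By Remark~\ref{rmk:Acd}, the principal symbol map $\sigma_{-1}$ embeds $C^\infty(X, T^*X)$ into $C^\infty(S^*X, \Cl(T^*X))$, and a suitable choice of $\alpha$ produces via its Clifford symbol a new quantized curvature whose symmetric part cancels any prescribed symmetric tensor. Since this cancellation acts only on the $T^*X \otimes T^*X$ factor and commutes with the $\End_{C^*_r\Gamma}(\gamma)$-valued coefficients and with the tracial pairing $\tau$, Connes' original commutative argument carries over verbatim to our $\Gamma$-equivariant setting. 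The resulting lift then realizes the infimum, yielding the stated equality.
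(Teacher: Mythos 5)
Your proof follows the same strategy as the paper: apply Lemma~\ref{lem} to rewrite $\Tr_{\omega}^{\Gamma}(\Theta^2)$ as $\sum_{i,j}\int_X \tau(\|c(\Theta_{ij})\|^2)$, use the pointwise orthogonal splitting $T^*X\otimes T^*X = \Lambda^2 \oplus S^2$ together with Lemma~\ref{lem:Acdc} to obtain the lower bound $\tau(\|c(\Theta_{ij})\|^2)\geq\tau(\|(\Theta_c)_{ij}\|^2)$, and then invoke Connes' Theorem~14 for the attainment of the infimum. Your Pythagorean decomposition $\|c(\Theta_{ij})\|^2 = \|(\Theta_c)_{ij}\|^2 + \|Sc(\Theta_{ij})\|^2$ is in fact a cleaner way to state what the paper calls a ``triangle inequality,'' and your observation that the anti-symmetric summand is lift-independent is a welcome clarification.

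One caveat worth flagging: your sketch of the infimum-attainment step via adding $\alpha\in\ker c\cap\Omega^1_F$ is not quite the right mechanism. Elements of $\ker c$ have vanishing order-$(-1)$ principal symbol, so adding such an $\alpha$ to $\mathbb A$ perturbs the quantized curvature only through $\hat d\alpha$ and lower-order cross terms, and it is not immediate that these perturbations sweep out all prescribed symmetric tensors at the order-$(-2)$ symbol level. Connes' actual argument in Theorem~14 of \cite{ConnesYM} proceeds through the structure of the junk two-forms and an explicit identification of the symmetric part of the quantized curvature, rather than through free variation inside $\ker c$. Since the paper itself simply defers to Connes for this step, your proposal is at the same level of rigor as the paper, but the specific mechanism you describe would need to be replaced by Connes' argument to be made precise.
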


\begin{proof}
In view of Lemmas~\ref{lem} and~\ref{lem:Acdc}, the proof follows essentially from the proof Theorem 14 in~\cite{ConnesYM}. Namely, observe that locally $T_x^*X\otimes T_x^*X\cong \Lambda^2\R^4\oplus S^2\R^4$ is orthogonally decomposed into the symmetric part and the antisymmetric part. By the triangle inequality
with Lemma~\ref{lem:Acdc}, we obtain the estimate:
\[
\tau(\|c(\Theta_{ij})\|^2)\ge\tau(\|A(c(\Theta_{ij}))\|^2)=\tau(\|(\Theta_c)_{ij}\|^2).
\]
Note that all numbers above are nonnegative  because $\Theta^2$ is a positive operator.
The proof then is implied by the previous lemma.
\end{proof}

\begin{theorem}
\label{thm:coincidence}
 Let $X$ be a closed, oriented, spin, smooth $4$-manifold with $b_1=4$. Let $\Gamma=\Z^4$ be a covering group of $X$.  We have $\theta^2\in \cL^{(1, \infty)}(\cH)$ and $\Theta^2\in \cL^{(1, \infty)}(\cE)$ and also the equality holds:
\[
\inf_{c(A)=A_c}\Tr_{\omega}(\theta^2)=\inf_{c(\mathbb A)=\mathbb A_c}\Tr_{\omega}^{\Gamma}(\Theta^2).
\]
Moreover the positive functional is independent of $\omega$.\end{theorem}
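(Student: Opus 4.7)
My plan is to evaluate the two infima separately and observe that they reduce to the same classical Yang--Mills integral, using the flatness of the Mishchenko--Fomenko bundle $\gamma$. The first step will be to establish the membership claims: since the quantized connections $A$ and $\mathbb{A}$ give $\theta=A^2$ and $\Theta=\mathbb{A}^2$ of pseudodifferential order $-2$, their squares have order $-4 = -\dim X$, so Connes' trace formula together with its $\Gamma$-equivariant analogue (as applied in the proof of Lemma~\ref{lem}) will place $\theta^2 \in \cL^{(1,\infty)}(\cH)$ and $\Theta^2 \in \cL^{(1,\infty)}(\cE)$. Both $\Tr_\omega(\theta^2)$ and $\Tr^\Gamma_\omega(\Theta^2)$ will then be realized as Wodzicki-type residues of the corresponding principal symbols, from which $\omega$-independence will follow immediately.

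The key algebraic input I will exploit is that the canonical connection $\nabla^\gamma$ on $\gamma = \tilde X\times_\Gamma C^*_r(\Gamma)$ is flat, since its transition functions are the constant $\Gamma$-translations on $C^*_r(\Gamma)$. Writing $\mathbb{A}_c = A_c\otimes 1 + 1\otimes \nabla^\gamma$, this flatness yields
\[
\Theta_c = (\mathbb{A}_c)^2 = F_{A_c}\otimes 1_\gamma,
\]
so componentwise $(\Theta_c)_{ij} = (F_{A_c})_{ij}\otimes 1_\gamma$, and the $\End_{C^*_r\Gamma}(\gamma)$-valued norm becomes $\|(\Theta_c)_{ij}\|^2 = \|(F_{A_c})_{ij}\|^2\cdot 1_\gamma$. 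Since $\tau(1_\gamma)=\langle \delta_e,\delta_e\rangle = 1$, this gives $\tau(\|(\Theta_c)_{ij}\|^2) = \|(F_{A_c})_{ij}\|^2$ pointwise on $X$.

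Assembling the pieces, Theorem~\ref{thm:inf} produces
\[
\inf_{c(\mathbb{A})=\mathbb{A}_c}\Tr^\Gamma_\omega(\Theta^2) = \sum_{i,j}\int_X \|(F_{A_c})_{ij}\|^2\,dx,
\]
while Theorem~14 of~\cite{ConnesYM} yields the same expression for $\inf_{c(A)=A_c}\Tr_\omega(\theta^2)$, and therefore the two infima coincide. The main obstacle I anticipate is the careful bookkeeping of the universal normalization constant in Connes' trace formula (the factor $\frac{1}{n(2\pi)^n}$ together with $\Tr_S$ on the spinor bundle) to ensure that it appears identically on both sides. This should go through because the principal symbols of $\theta^2$ and $\Theta^2$ differ only by the trivial insertion of $1_\gamma\in\End_{C^*_r\Gamma}(\gamma)$, which is neutralized by $\tau(1_\gamma)=1$; the hypothesis $b_1=4$ with $\Gamma=\Z^4$ enters (as in Lemma~\ref{lem}) to make the $\Gamma$-trace formula genuinely factorize through the integral over $X$.
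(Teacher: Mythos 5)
Your proposal is correct and relies on the same essential fact as the paper — the flatness of the Mishchenko--Fomenko bundle $\gamma$ — but realizes it more directly. You argue pointwise that $\mathbb{A}_c = A_c \otimes 1 + 1 \otimes \nabla^\gamma$ with $F_{\nabla^\gamma}=0$, hence $\Theta_c = F_{A_c}\otimes 1_\gamma$ and $\tau(\|(\Theta_c)_{ij}\|^2) = \|(F_{A_c})_{ij}\|^2$ since $\tau(1_\gamma)=\langle\delta_e,\delta_e\rangle=1$, after which Theorem~\ref{thm:inf} and Connes' Theorem~14 close the argument. The paper instead unfolds the same content via Fourier duality: it identifies $C^*_r(\Z^4)\cong C(\hat\T^4)$, views $\mathbb{A}_c$ as the family $\{A_c\otimes\rho\}_{\rho\in\hat\T^4}$ of flat twists, observes $\theta_{A_c\otimes\rho}=\theta_{A_c}$ because each $\rho$ is flat, and then interprets $\tau$ as integration over the Picard torus $\hat\T^4$ (normalized to volume $1$), yielding $\tau\circ\Tr_E(\Theta_c^2)=\int_{\hat\T^4}\Tr_E\theta^2_{A_c\otimes\rho}=\Tr_E\theta_c^2$. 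Your route is cleaner and does not formally require $\Gamma$ to be abelian at this particular step; the paper's detour buys the explicit link to the family picture over the Picard torus, which is the geometric content behind the Nahm transform. Either way the result still requires $\Gamma=\Z^4$ because Theorem~\ref{thm:inf} and Proposition~\ref{prop:gammaConnesTr} (the $\Gamma$-trace formula) are only proved in that setting. Your handling of the membership claims and $\omega$-independence matches the paper's.
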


\begin{proof}
Note that $\theta^2$ is a pseudo differential operator of order $-4$,
 and $\Theta^2$ is a pseudo differential operator of order $-4$ with coefficient in $C^*_r(\Gamma)$. 
Then $\theta^2\in\cL^{(1, \infty)}(\cH)$ follows from Theorem~1 of~\cite{ConnesYM} and $\Theta^2\in\cL^{(1, \infty)}(\cE)$ follows from Proposition~\ref{prop:pdoDixTr} below.

To show the identity, by Theorem~\ref{thm:inf}, and Theorem~14 in~\cite{ConnesYM}, we only need to  verify the equality:
\[
\int_X\Tr_E\theta_c^2=\int_X\tau\circ\Tr_E(\Theta_c^2). 
\]
Recall that the connection $A_c$ on 
 $E\rightarrow X$
 gives rise to a family of connections $A_{\rho}$  on
 the family of complex vector bundles:
\[
\mathbb{E}=\{\tilde E\otimes_{\Gamma}\C_{\rho}\}_{\rho\in \hat \T^4}
\rightarrow X\times \hat \T^4.
\]
Denote by $\theta_{A_c\otimes\rho}$ the curvature of $A_{\rho}$,
which actually coincides with   $\theta_c=\theta_{A_c}.$
The twisted connection $\mathbb{A}_c$ on $\tilde E\otimes_{\Gamma}C^*_r(\Gamma)\rightarrow X$  is identified with the above family of connections. 
By Fourier transform, evaluation at the  identity in $C^*_r(\Gamma)$ corresponds averaging of elements of $C(\hat \T^4)$ over $\hat \T^4.$ 
So $\tau\circ \Tr_E(\Theta_c^2)$ is to average the family involving curvatures 
$\int_{\hat \T^4}\Tr_E\theta_{A_c\otimes\rho}^2$ on constant $1$ function on $\hat \T^4$, which corresponds to $\delta_e$ in $C^*_r\Gamma.$ 
Now the equality:
\[
\theta_{A_c\otimes\rho}(1)=\theta_{A_c}+(d\rho)(1)=\theta_{A_c}.
\]
 follows since  $\rho$ is a flat connection. 
Set the volume of $\hat\T^4$ to be $1$. We have: 
\[
\tau\circ\Tr_E(\Theta_c^2)=\int_{\hat\T^4}\Tr_E\theta^2_{A_c\otimes\rho}=\Tr_E\theta_{A_c}^2=\Tr_E\theta^2.
\]
and the proof is complete. 
\end{proof}

\begin{definition}
A higher  Connes-Yang-Mills instanton is a connection of the form
$\mathbb{A}_c$ 
which attains local minimum of  the higher Connes-Yang-Mills action functional given by: 
\[
CYM^{\Gamma}({\mathbb A}_c):=16\pi^2\inf_{c(\mathbb A)={\mathbb A}_c}I(\mathbb A).
\] 
\end{definition}

The corollary below follows immediately from Theorem~14 in~\cite{ConnesYM} and Theorem~\ref{thm:coincidence}.

\begin{corollary}
 Let $X$ be a closed, oriented, spin, smooth $4$-manifold with $b_1=4$. Let $\Gamma=\Z^4$ be a covering group of $X$.  Then the  Connes-Yang-Mills functional and its higher analogue coincide:
\[
CYM({A}_c)=CYM^{\Gamma}({\mathbb A}_c).
\]

In particular a Connes-Yang-Mills instanton is equivalent to 
its  higher version.
\end{corollary}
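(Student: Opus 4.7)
The plan is essentially to read off the corollary from Theorem~\ref{thm:coincidence} combined with Theorem~14 of~\cite{ConnesYM}. Unpacking the definition, one has
\[
CYM^{\Gamma}(\mathbb{A}_c) = 16\pi^2 \inf_{c(\mathbb{A})=\mathbb{A}_c} \Tr^{\Gamma}_{\omega}(\Theta^2),
\]
and the classical Connes-Yang-Mills functional is defined analogously via
\[
CYM(A_c) = 16\pi^2 \inf_{c(A)=A_c}\Tr_{\omega}(\theta^2).
\]
Theorem~\ref{thm:coincidence} asserts exactly that these two infima over quantized lifts coincide (and that both are $\omega$-independent), so multiplying by the common normalization $16\pi^2$ already yields the identity $CYM(A_c)=CYM^{\Gamma}(\mathbb{A}_c)$.

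For the final sentence about instantons, I would argue as follows. A Connes-Yang-Mills instanton is by definition a connection $A_c$ that is a local minimizer of the functional $A_c \mapsto CYM(A_c)$ on the affine space of unitary connections on $E$. Because the Mishchenko–Fomenko lift $A_c \mapsto \mathbb{A}_c$ is an affine map between affine spaces of connections (it simply tensors with the flat connection on $\gamma$), and because the equality of functionals just established holds \emph{identically} in $A_c$, local minima of $CYM(\cdot)$ correspond bijectively to local minima of $CYM^{\Gamma}(\cdot)$ along lifts of the form $\mathbb{A}_c$. This gives the claimed equivalence.

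There is no real obstacle here: the substantive content lives in the preceding theorems. The only point one must be careful about is the interpretation of ``equivalence of instantons'': since $CYM^{\Gamma}$ is a functional on classical connections on $\cE_0$ and not literally on $E$, the equivalence is only as natural as the correspondence $A_c \leftrightarrow \mathbb{A}_c$. In particular, should one want to rule out local minima of $CYM^{\Gamma}$ that do not arise from lifting a minimizer on $E$, a small additional remark is needed; but within the class of connections $\mathbb{A}_c$ obtained by twisting with the Mishchenko-Fomenko bundle, the correspondence is one-to-one and the proof is complete.
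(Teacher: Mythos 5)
Your proposal is correct and takes essentially the same route as the paper, which simply invokes Theorem~14 of~\cite{ConnesYM} (which identifies $CYM(A_c)$ with $16\pi^2\inf_{c(A)=A_c}\Tr_{\omega}(\theta^2)$) together with Theorem~\ref{thm:coincidence} and declares the corollary immediate. Your unpacking of the definitions, the observation that the common factor $16\pi^2$ carries the equality of infima over to the functionals, and your remark that instantons are compared only within the class of Mishchenko--Fomenko lifts $\mathbb{A}_c$ all match the paper's (implicit) reasoning.
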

 Lemma \ref{cNT}  gives the following:
 
 \begin{corollary}
 Let $X$ be a closed, oriented, spin, smooth $4$-manifold with $b_1=4$. Let $\Gamma=\Z^4$ be a covering group of $X$. The higher Nahm transform sends the minimizer of the 
higher Connes-Yang-Mills functional  to the minimiser 
of the Connes-Yang-Mills functional.
\end{corollary}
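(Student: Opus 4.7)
The idea is to combine three ingredients already established in the excerpt: the coincidence $CYM(A_c)=CYM^{\Gamma}(\mathbb{A}_c)$ from the preceding corollary (equivalently Theorem~\ref{thm:coincidence}), Connes' characterisation (Theorem~14 of~\cite{ConnesYM}) of minimisers of the classical Connes--Yang--Mills functional on a closed Riemannian $4$-manifold as ASD connections, and Lemma~\ref{cNT}, which asserts that the classical Nahm transform sends ASD connections to ASD connections.

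I would execute the argument as a short chain of implications. Let $\mathbb{A}_c$ be a minimiser of $CYM^{\Gamma}$. By the preceding corollary the underlying classical connection $A_c$ on $E\to X$ is a minimiser of $CYM$, and by Theorem~14 of~\cite{ConnesYM} it is therefore ASD. Hence the entire family $\{A_\rho=A_c\otimes\rho\}_{\rho\in\hat{X}}$ used in Section~\ref{section Nahm commutative} consists of ASD connections, so by Lemma~\ref{cNT} the classical Nahm transform $\hat{A}$ on the bundle $\hat{E}\to\hat{X}$ is ASD. Applying Theorem~14 of~\cite{ConnesYM} once more on the dual $4$-torus $\hat{X}$ (which is again closed, oriented and spin, with $b_1=4$), we conclude that $\hat{A}$ is a minimiser of the Connes--Yang--Mills functional on $\hat{E}$.

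The remaining task, which I expect to be the main obstacle, is to confirm that the higher Nahm transform $(\cE_A^{\infty},\hat{d}_A)$ defined non-commutatively in Section~3 really does reduce, for $\Gamma=\Z^4$, to the classical Nahm transform $(\hat{E},\hat{A})$ on the Picard torus. Via Fourier duality $C^*_r(\Z^4)\cong C(\hat{X})$, the Mishchenko--Fomenko bundle $\gamma$ unpacks into the universal flat family, the twisted Dirac operator $D_A$ decomposes as the fibrewise family $\{D_{A_\rho}\}_\rho$, and the finitely generated projective module $\cE_A=\ker D_A^-$ is identified with the sections of $\coprod_\rho\ker D_{A_\rho}^*\to\hat{X}$. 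What requires care is the identification of the non-commutative projection $Q_A=P_A\circ P'$, built from Kasparov stabilisation, with the fibrewise $L^2$-projection $p_\rho$ onto $\ker D^*_{A_\rho}$ that defines $\hat{A}$; once this is checked the induced connection $\hat{d}_A$ agrees with $\hat{A}$ under the duality, and the chain of implications above yields the corollary.
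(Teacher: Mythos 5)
Your chain of implications---$\mathbb{A}_c$ minimises $CYM^{\Gamma}$ $\Rightarrow$ $A_c$ minimises $CYM$ (by the coincidence corollary) $\Rightarrow$ $A_c$ is ASD (Theorem~14 of \cite{ConnesYM}) $\Rightarrow$ $\hat{A}$ is ASD (Lemma~\ref{cNT}) $\Rightarrow$ $\hat{A}$ minimises $CYM$ on $\hat{X}$ (Theorem~14 again)---is precisely the argument the paper intends when it states the corollary ``follows from Lemma~\ref{cNT}'' together with the coincidence of functionals. You correctly flag that the paper also takes for granted, without spelling out, the Fourier-dual identification of the non-commutative projection $Q_A$ and induced connection $\hat{d}_A$ with the fibrewise $L^2$-projection and classical $\hat{A}$ over the Picard torus.
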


\section{Some properties of Dixmier $\Gamma$-trace}
By definition, Dixmier $\Gamma$-trace is positive and vanishes on the ideal of $\Gamma$-trace class operators. 
We also have the following property for the Dixmier $\Gamma$-trace.
Let $\mathcal{E}$ be a Hilbert $C^*_r(\Gamma)$ module.

\begin{lemma}
For every $A\in \mathcal{L}^{(1,\infty)} (\mathcal{E})$ and $Y\in \cL(\cE)$ bounded, we have:
$$\Tr_{\omega}^{\Gamma} (AY) =\Tr_{\omega}^{\Gamma} (YA).$$
\end{lemma}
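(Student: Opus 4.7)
The plan is to reduce the general statement to the tractable case where $Y$ is unitary, for which $AY$ and $YA$ are conjugate by $Y$ itself, and then invoke a standard decomposition of bounded operators into unitaries in the unital $C^{*}$-algebra $\cL(\cE)$.

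\textbf{Step 1 (unitary invariance).} I would first verify that for any unitary $U\in\cL(\cE)$ and any $B\in\cL^{(1,\infty)}(\cE)$ one has $U^{*}BU\in\cL^{(1,\infty)}(\cE)$ and $\Tr_{\omega}^{\Gamma}(U^{*}BU)=\Tr_{\omega}^{\Gamma}(B)$. By the definition given in Section~4, it is enough to establish this for positive $B\in\cK(\cE)_{+}$, because real and imaginary parts as well as the positive/negative parts of a self-adjoint operator are obtained by continuous functional calculus and hence commute with conjugation by $U$. For positive $B$, functional calculus gives $\chi_{(s,\infty)}(U^{*}BU)=U^{*}\chi_{(s,\infty)}(B)U$. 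Since $\tr_{\Gamma}$ is a trace on $\cK(\cE)_{+}$, it is invariant under conjugation by a unitary, so $\tr_{\Gamma}(\chi_{(s,\infty)}(U^{*}BU))=\tr_{\Gamma}(\chi_{(s,\infty)}(B))$ for every $s\ge 0$. Hence $\mu_{t}(U^{*}BU)=\mu_{t}(B)$ for all $t>0$, $\delta^{\Gamma}_{r}(U^{*}BU)=\delta^{\Gamma}_{r}(B)$, and applying $\omega\text{-}\lim$ yields unitary invariance of $\Tr_{\omega}^{\Gamma}$.

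\textbf{Step 2 (unitary case of the lemma).} For $Y=U$ unitary, the identity $UA=U(AU)U^{-1}$ is immediate. Applying Step 1 with $B=AU$, we obtain
\[
\Tr_{\omega}^{\Gamma}(UA)=\Tr_{\omega}^{\Gamma}\bigl(U(AU)U^{-1}\bigr)=\Tr_{\omega}^{\Gamma}(AU).
\]
Of course one must also know $AU,UA\in\cL^{(1,\infty)}(\cE)$; this follows from the general inequalities $\mu_{t}(AU),\mu_{t}(UA)\le\|U\|\,\mu_{t}(A)=\mu_{t}(A)$, which are proved in the same way as in~\cite{FK}.

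\textbf{Step 3 (reduction to unitaries).} Since $\cE$ is a Hilbert $C^{*}_{r}(\Gamma)$-module, $\cL(\cE)$ is a unital $C^{*}$-algebra. By the Russo--Dye theorem, every $Y\in\cL(\cE)$ can be written as a finite complex linear combination $Y=\sum_{j=1}^{N}\lambda_{j}U_{j}$ with $U_{j}\in\cL(\cE)$ unitary (for instance, decompose $Y$ into its real and imaginary parts and then each self-adjoint contraction as an average of two unitaries). Linearity of $\Tr_{\omega}^{\Gamma}$ on $\cL^{(1,\infty)}(\cE)$, combined with Step 2 applied to each $U_{j}$, gives
\[
\Tr_{\omega}^{\Gamma}(AY)=\sum_{j=1}^{N}\lambda_{j}\Tr_{\omega}^{\Gamma}(AU_{j})=\sum_{j=1}^{N}\lambda_{j}\Tr_{\omega}^{\Gamma}(U_{j}A)=\Tr_{\omega}^{\Gamma}(YA),
\]
which completes the proof.

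The main technical point will be the bookkeeping in Step 1: $\mu_{t}$ is defined in the paper only on the positive cone, so one has to check that the four-fold decomposition $A=(\Re A)_{+}-(\Re A)_{-}+i(\Im A)_{+}-i(\Im A)_{-}$ is preserved under unitary conjugation. Once this is in place, the cyclicity reduces to unitary invariance of the $\Gamma$-trace on spectral projections, and the rest is purely algebraic via Russo--Dye.
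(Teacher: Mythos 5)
Your proof is correct and follows essentially the same approach as the paper: decompose the bounded operator $Y$ into a finite linear combination of unitaries and reduce to unitary invariance of the $\Gamma$-Dixmier trace. The only cosmetic difference is that you verify unitary invariance through $\mu_t$ and spectral projections, whereas the paper works directly with the variational formula $\delta_r^{\Gamma}(A)=\sup_P\{\tr_{\Gamma}(AP)\}$ and the observation that conjugation by $U$ preserves the $\Gamma$-rank of $P$; since $\delta_r^{\Gamma}(A)=\int_0^r\mu_t(A)\,dt$, these are two phrasings of the same step.
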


\begin{proof}
Because every bounded linear operator on $\cE$ can be written as a linear combination of unitary operators
 (see, for example, Lemma on page 209 VI 1.6 of~\cite{RS} adapted to the case of Hilbert module), we only need to verify  the equality:
\[
\delta_r^{\Gamma}(A)=\delta_r^{\Gamma}(U^*AU) 
\] 
for a unitary operator $U$.
Now 
\[
\delta_r^{\Gamma}(U^*AU)=\sup_P\{\tr(U^*AUP)\}=\sup_P\{\tr(AUPU^*)\}=\delta_r^{\Gamma}(A)
\]
because $U^*PU$ is a projection of rank $r$ if $P$ does. 
\end{proof}

In the following, we will introduce some examples of pseudo differential operators 
arising in the domain of Dixmier $\Gamma$ trace together with an analogue of Connes' trace theorem \cite{ConnesYM} (see also~\cite{AM}) when $\Gamma$ is free abelian. 
We recall some pseudo differential calculus on a closed manifold with coefficient
 in a $C^*_r(\Gamma)$-bundles of finite type in~\cite{BFKM}. 

Let $\cV\rightarrow X$ be a flat $C^*_r(\Gamma)$-bundle over a closed manifold $X$,
 whose fiber is isomorphic to $(C^*_r(\Gamma))^N$ for some $N\in\N$. 
 For example, the tensor 
product of a complex vector bundle $E$ over $X$ with the Mischenko-Fomenko bundle $\gamma=\tilde X\times_{\Gamma}C^*_r(\Gamma)$ forms such a bundle $\cV=E\otimes\gamma$. 

A pseudo differential operator acts on the set of smooth sections: 
\[
\cE^{\infty}:=C^{\infty}(X, \cV)
\] 
and one can take  the  closure to obtain  a Sobolev space $H^{l}(X, \cV)$. 
Denote the Hilbert module over $C^*_r(\Gamma)$:
\[
\cE:=L^2(X, \cV)=H^0(X, \cV).
\]
As in  the classical case, a linear operator:
\[
P: \cE^{\infty}\rightarrow\cE^{\infty}
\]
is a pseudo differential operator of order $m$
 if it can be expressed as:
\[
P=\sum_{j}P_j+R
\]
where $R$ is a smoothing operator and $P_j$ are pseudo differential 
operators with support in the domain of $\psi_i$ 
 for an atlas $\{\psi_i\}$ of  $\cV\rightarrow X$.
We denote by $\Psi^{m}_{\Gamma}(X, \cV)$ the space of pseudo differential operators on  $X$ with coefficient in $\cV$ of order $m$. 

A pseudo differential operator on $X$ with coefficient in the 
$C^*_r(\Gamma)$-bundle $\cV$ can be also  constructed by gluing. 
Let $X=\cup_j\Omega_j$ be a locally finite covering of $X$
 by coordinate neighbourhoods, 
  and $P_j$ be pseudo differential operators of order $m$ on $\Omega_j$. 
Let $\sum_j\psi_j=1$ be partition of unity subordinate to the given covering and let $\phi_j\in C^{\infty}_0(\Omega_j)$ with $\phi_j|_{\supp\psi_j}=1.$ 
Then 
\[
P=\sum_{j}\phi_jP_j\psi_j+R
\] 
is a pseudo differential operator of order $-m$ where 
 $R\in \Psi^{-\infty}_{\Gamma}(X, \cV)$ is a smoothing operator.

One can check  that any pseudo differential operator $P\in\Psi^m_{\Gamma}(X, \cV)$ of order $m$ extends to: 
\[
P: H^m(X, \cV)\rightarrow H^0(X, \cV). 
\]
In particular, any $P\in\Psi^{m}_{\Gamma}(X, \cV)$ with $m\le 0$ extends to a bounded linear operator 
\[
P: \cE\rightarrow\cE.
\]
Moreover $P\in\cK(\cE)$ holds
if  $m\le-\dim X$. In particular, if $m<-\dim X$, $P\in\cK(\cE)$ is a $\Gamma$-trace class operator. 

\begin{example}
Let $X=\T^n$ be the flat torus and $\Delta$ be the Laplacian detemined by the square of the Dirac operator on $\T^n.$ Let 
\[
\Delta_{\gamma}: C^{\infty}(\T^n, S\otimes\gamma)\rightarrow C^{\infty}(\T^n, S\otimes\gamma)
\] 
be the twist of $\Delta$ by the Mischenko-Fomenko line bundle $\gamma$,
induced from $\Delta$. 
Then: 
\begin{equation}
\label{eq:deltaL1infty}
(1+\Delta_{\gamma})^{-\frac{n}{2}}\in\cL^{(1, \infty)}(\cE)
\end{equation}
where $\cE=L^2(\T^n, S\otimes\gamma).$
We will verify this by calculating its Dixmier $\Gamma$ trace explicitly in the next subsection. See Proposition~\ref{prop:eq.torus.Dix}.
\end{example}

\begin{proposition}
\label{prop:pdoDixTr}
Let $\Gamma=\Z^n$. 
Let $X$ be a closed manifold of dimension $n$ with a $\Gamma$-cover.
Let $\cV\rightarrow X$ be a flat 
$C^*_r(\Gamma)$-bundle whose fiber is isomorphic to $(C^*_r(\Gamma))^N$ for some $N\in\N$. 
Let $\cE=L^2(X, \cV)$ be the Hilbert module of $L^2$-sections of $\cV.$
Then every pseudo differential operator $P\in\Psi^{-n}_{\Gamma}(X, \cV)$ of order $-n$ has finite Dixmier $\Gamma$ trace, i.e., 
\[
P\in \cL^{(1, \infty)}(\cE) \quad \text{and} \quad \Tr_{\omega}^{\Gamma}(P)<\infty.
\]
\end{proposition}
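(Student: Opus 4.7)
The plan is to reduce the claim to two facts: the existence of a canonical reference operator in $\cL^{(1,\infty)}(\cE)$, and the ideal property of $\cL^{(1,\infty)}(\cE)$ inside $\cL(\cE)$. Fix a Hermitian metric and a compatible connection on $\cV$, together with a Riemannian metric on $X$, and let $\Delta_{\cV}$ denote the associated generalized Laplacian on $\cV$. Set $\Lambda := (1+\Delta_{\cV})^{1/2}$, a positive self-adjoint elliptic element of $\Psi^{1}_{\Gamma}(X,\cV)$. Any $P \in \Psi^{-n}_{\Gamma}(X,\cV)$ factors as $P = Q \Lambda^{-n}$ with $Q := P \Lambda^{n} \in \Psi^{0}_{\Gamma}(X,\cV)$. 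Since $L^2$-boundedness of order zero pseudodifferential operators extends to the $C^*_r(\Gamma)$-coefficient setting developed in~\cite{BFKM}, one has $Q \in \cL(\cE)$. It therefore suffices to establish (i) $\Lambda^{-n} \in \cL^{(1,\infty)}(\cE)$ and (ii) $\cL^{(1,\infty)}(\cE)$ is a two-sided ideal in $\cL(\cE)$.

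For (ii), I would invoke the Fack--Kosaki submultiplicativity inequalities
\[
\mu_t(AB) \le \|B\|\, \mu_t(A), \qquad \mu_t(BA) \le \|B\|\, \mu_t(A),
\]
which hold in the semifinite von Neumann setting given by the weak closure of $\cK(\cE)$ equipped with the normal semifinite trace $\tr_\Gamma$ of Section~\ref{sec:DixmierGamma-trace} (see~\cite{FK}). Integrating these pointwise bounds against $(\log(1+t))^{-1}$ yields $\|AB\|_{(1,\infty)}, \|BA\|_{(1,\infty)} \le \|B\|\,\|A\|_{(1,\infty)}$, and hence the ideal property.

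For (i), the strategy is to use the Fourier isomorphism $C^*_r(\Z^n) \cong C(\hat\T^n)$. Under this identification, $\gamma = \tilde{X} \times_\Gamma C^*_r(\Gamma)$ disintegrates as the family of flat line bundles $\{L_\rho\}_{\rho \in \hat\T^n}$; sections of $\cV$ disintegrate into continuous families of sections of classical finite rank complex vector bundles $\cV_\rho \to X$; and $\Lambda^{-n}$ becomes a continuous family $\{\Lambda_\rho^{-n}\}_{\rho \in \hat\T^n}$ of classical order $-n$ elliptic operators on $X$. Under this decomposition $\tr_\Gamma$ corresponds to $\int_{\hat\T^n}\Tr(\cdot)\,d\rho$, so the generalized singular function $\mu_t(\Lambda^{-n})$ is controlled by the $\rho$-averaged classical singular values of $\Lambda_\rho^{-n}$. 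Uniform-in-$\rho$ Weyl asymptotics for $\Lambda_\rho$ on the compact manifold $X$, which hold by compactness of $\hat\T^n$ and continuous dependence of symbols on $\rho$, then give $\mu_t(\Lambda^{-n}) = O(1/t)$ and therefore $\delta_t^\Gamma(\Lambda^{-n}) = O(\log(1+t))$, so $\Lambda^{-n} \in \cL^{(1,\infty)}(\cE)$.

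The main obstacle is the uniform control in $\rho$ needed in (i): one must verify that the Weyl eigenvalue asymptotics for the operators $\Lambda_\rho$ are genuinely uniform over $\rho \in \hat\T^n$ and that the fiberwise singular value estimates assemble correctly under the $\Gamma$-trace. This is exactly where the hypothesis $\Gamma = \Z^n$ is essential, since it provides a compact Pontryagin dual $\hat\T^n$ over which uniform constants can be extracted. Once (i) is in hand, combining it with (ii) and the factorization $P = Q\Lambda^{-n}$ immediately yields $P \in \cL^{(1,\infty)}(\cE)$ and hence $\Tr_{\omega}^{\Gamma}(P) < \infty$.
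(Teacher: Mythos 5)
Your proposal is correct in its overall architecture, but it takes a genuinely different route from the paper. The common skeleton you share with the paper is the factorization $P = (\text{bounded}) \cdot (\text{reference } \cL^{(1,\infty)}\text{ operator})$ together with the ideal property of $\cL^{(1,\infty)}(\cE)$ in $\cL(\cE)$. The divergence is in how the reference operator's $(1,\infty)$-membership is established. The paper first handles $X = \T^n$ with $\cV = \gamma$, where Proposition~\ref{prop:eq.torus.Dix} computes the Dixmier $\Gamma$-trace of $(1+\Delta_\gamma)^{-n/2}$ explicitly by Fourier disintegration over $\hat{\T}^n$ and the shifted spectral lattice $\Z^n + i\alpha$; then for general $X$ it reduces to the torus case by a localization argument $P = \sum_j \phi_j P \psi_j + R$ with $\supp \phi_j \subset \Omega_j$ where $\cV$ trivializes, embedding each $\Omega_j$ in $\T^n$ and extending $\phi_j P \psi_j$ by zero. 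Your proposal instead works directly on $X$: you build a reference operator $\Lambda^{-n}$ from a generalized Laplacian on $\cV$ over $X$ itself and argue $\Lambda^{-n} \in \cL^{(1,\infty)}(\cE)$ via uniform-in-$\rho$ Weyl asymptotics for the classical operators $\Lambda_\rho$. What the paper buys with localization is that it only ever needs the explicit torus calculation (a fully concrete lattice sum); what you buy by avoiding localization is a more conceptual, coordinate-free argument --- at the cost of needing a uniform Weyl estimate for a continuous family of order-one elliptic operators. That estimate is defensible (the principal symbol of $\Lambda_\rho$ is independent of $\rho$ because the flat twist only perturbs lower-order terms, and continuity in $\rho$ over the compact dual $\hat{\T}^n$ gives uniform control of the remainder), but it is not as elementary as the paper's explicit lattice count, and you correctly flag it as the point needing real work. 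Both routes use the Fourier disintegration of $\tr_\Gamma$ into $\int_{\hat\T^n} \Tr(\cdot)\,d\rho$; the paper just confines this computation to the torus.
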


\begin{proof}
We will first prove the special case when $X=\T^n$ and $\cV=\gamma$ as in the previous example. 
A pseudo differential operator $P$ of order $-n$ has the form 
\[
P=B(1+\Delta_{\gamma})^{-\frac{n}{2}}
\]
where $B\in\Psi^0_{\Gamma}(\T^n, \cV)$ 
is a bounded linear operator on $\cE$ and 
$(1+\Delta_{\gamma})^{-\frac{n}{2}}\in\Psi^{-n}_{\Gamma}(\T^n, \cV)$ is a compact operator on $\cE$.
From Proposition~\ref{prop:eq.torus.Dix} we have  (\ref{eq:deltaL1infty}). 
Then noting that $\cL^{(1, \infty)}(\cE)$ is an ideal of $\cL(\cE)$, we have 
\[
P\in\cL^{(1, \infty)}(\cE) \qquad \cE=L^2(\T^n, \gamma).
\]
 
Next, in general let $X$, $\cV$ and $P$ be as assumed in the proposition.  Then 
\begin{equation}
\label{eq:decom.pdo}
P=\sum_j\phi_jP\psi_j+R
\end{equation}
where $R$ is a smoothing operator. 
Let us fix a covering $\{\Omega_j\}_j$ of $X$ such that $\cV|_{\Omega_j}$ is trivial.  
To see $\Tr_{\omega}^{\Gamma}(P)<\infty$
in view of~(\ref{eq:decom.pdo}) we only need to verify finiteness:
\[
\Tr_{\omega}^{\Gamma}(\phi_j P\psi_j)<\infty \qquad \forall j.
\]
Embed $\Omega_j$ in $\T^n$ and define the operator to be $0$ on other coordinate patch of $\T^n$. Then $\phi_j P\psi_j$ can be viewed as a pseudo differential operator on $\T^n$ with coefficient in $\cV$. 
But we have already verified: 
\[
\phi_j P\psi_j\in\cL^{(1, \infty)}(\cE)
\]
and hence $P\in \cL^{(1, \infty)}(\cE).$
\end{proof}

The principal symbol $\sigma_P$ of $P\in\Psi^m_{\Gamma}(X, \cV)$ 
can be identified as: 
\[
\sigma_P\in C^{\infty}(S^*X, \End\cV).
\]
Define the $\Gamma$-residue of $P$ by: 
\[
\Res_{\Gamma}(P):=\int_{S^*X}\Tr_{\cV}(\sigma_P(x, \xi))d\nu.
\]
Here $\Tr_{\cV}$ is the composition of matrix trace $\Tr$ 
with the von Neumann trace $\tr$,
 regarding an element of $\End(\cV)$ as a matrix with entries in $C^*_r(\Gamma)$
 locally.

\begin{proposition}
\label{prop:gammaConnesTr}
Under the condition in Proposition~\ref{prop:pdoDixTr}, 
the equality holds:
\[
\Tr_{\omega}^{\Gamma}(P)=\frac{1}{n}\Res_{\Gamma}(P).
\]
\end{proposition}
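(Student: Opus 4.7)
The plan is to follow Connes' original proof of the classical trace theorem in~\cite{ConnesYM}, replacing Weyl asymptotics and the ordinary Dixmier trace by their $\Gamma$-equivariant counterparts via heat kernel methods, after first localising to a torus model.

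The first step is a set of standard algebraic and local reductions. Both $\Tr_{\omega}^{\Gamma}$ and $\Res_{\Gamma}$ are linear on $\Psi^{-n}_{\Gamma}(X,\cV)$, both vanish on operators of order strictly less than $-n$ (smoothing operators are $\Gamma$-trace class and so annihilated by $\Tr_{\omega}^{\Gamma}$; the residue depends only on the symbol of order $-n$), and both are tracial. Using the partition-of-unity decomposition~\eqref{eq:decom.pdo} and embedding each chart into $\T^n$ as in the proof of Proposition~\ref{prop:pdoDixTr}, it suffices to treat operators on $L^2(\T^n,\underline{\C}^N\otimes\gamma)$. Writing $P=B(1+\Delta_\gamma)^{-n/2}+R'$ with $B\in\Psi^0_\Gamma$ bounded and $R'\in\Psi^{-n-1}_\Gamma$ of negligible order for both functionals, the identity reduces to evaluating $\Tr_{\omega}^{\Gamma}(BQ)$ and $\Res_\Gamma(BQ)$ for the single model operator $Q:=(1+\Delta_\gamma)^{-n/2}$.

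The second step is a $\Gamma$-equivariant Weyl asymptotic. Using finite propagation speed for the wave operator of $\Delta_\gamma$ to localise the heat kernel of $e^{-t\Delta_\gamma}$ to a fundamental domain for $\Gamma=\Z^n$ acting on $\R^n$, the standard parametrix construction gives the short-time expansion
\[
\tr_\Gamma\!\bigl(B\,e^{-t(1+\Delta_\gamma)}\bigr)\ \sim\ (4\pi t)^{-n/2}\int_{\T^n}(\tr\circ\Tr)\bigl(\sigma_0(B)\bigr)\,dx\qquad(t\to 0^+),
\]
whose right-hand side is a fixed multiple of $\Res_\Gamma(BQ)$ after feeding the heat trace through the Mellin representation of $(1+\Delta_\gamma)^{-n/2}$. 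Applying the Hardy--Littlewood--Karamata Tauberian theorem to the non-decreasing function $r\mapsto\delta^{\Gamma}_r(BQ)$, whose Laplace--Stieltjes transform is controlled by the heat trace above, yields
\[
\delta^{\Gamma}_r(BQ)\ =\ \tfrac{1}{n}\Res_\Gamma(BQ)\,\log(1+r)\ +\ O(1)\qquad(r\to\infty).
\]
This simultaneously shows $BQ\in\cL^{(1,\infty)}$, identifies $\Tr_{\omega}^{\Gamma}(BQ)=\tfrac{1}{n}\Res_\Gamma(BQ)$ independently of $\omega$, and combined with the reductions above proves the proposition.

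The main obstacle I anticipate is the Tauberian step, which is no longer purely spectral: because $\mu_t^{\Gamma}$ is defined via spectral projections of real-valued $\Gamma$-rank rather than a point spectrum, one cannot copy the classical eigenvalue-counting argument verbatim. Instead the Karamata--Wiener theorem must be applied directly to $\delta^{\Gamma}_r$ viewed as a non-decreasing real function, using only positivity and normality of $\tr_\Gamma$ together with the heat asymptotic above. This is the type II analogue of the corresponding argument in~\cite{ConnesYM} and is the one place where genuine analytic work is needed beyond the formal reductions; everything else is bookkeeping with symbols and the identification of $\Tr_\cV$ as $\tr\circ\Tr$.
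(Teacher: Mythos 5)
Your route is genuinely different from the paper's. The paper argues ``softly'': after observing that $\Tr_{\omega}^{\Gamma}$ kills $\Psi^{-n-1}_{\Gamma}$ and hence descends to a positive linear functional $T\colon f_P\mapsto\Tr_{\omega}^{\Gamma}(P)$ on $C(S^*X)$ via the principal symbol $f_P=\Tr_{\cV}[\sigma_P]$, it invokes the Riesz--Markov--Kakutani theorem to realize $T$ as integration against a positive measure $\mu$ on $S^*X$. Unitary invariance of $\Tr_{\omega}^{\Gamma}$ forces $\mu$ to be isometry-invariant; restricting to $X=S^n$ where $S^*S^n$ is a homogeneous space for $SO(n+1)$, uniqueness of the invariant measure then pins $\mu$ down to be proportional to the volume form, and the proportionality constant is read off from the flat-torus computation of Proposition~\ref{prop:eq.torus.Dix}. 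Your approach instead localizes to $\T^n$, factors $P=BQ+R'$ with $Q=(1+\Delta_\gamma)^{-n/2}$, proves a $\Gamma$-equivariant short-time heat asymptotic, and feeds it through a Tauberian theorem for the nondecreasing function $r\mapsto\delta^{\Gamma}_r(BQ)$. Both are recognized routes to Connes' trace theorem; the paper's buys the general manifold case from a single explicit calculation and avoids all heat-kernel and Tauberian analysis, while yours produces the stronger asymptotic $\delta^{\Gamma}_r(BQ)=\tfrac1n\Res_{\Gamma}(BQ)\log(1+r)+O(1)$ and shows $\omega$-independence more directly.

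Two remarks on precision in your sketch. First, your displayed heat asymptotic as written, $\tr_\Gamma(Be^{-t(1+\Delta_\gamma)})\sim(4\pi t)^{-n/2}\int_{\T^n}(\tr\circ\Tr)(\sigma_0(B))\,dx$, only makes literal sense when $B$ is multiplication by a function; for a general $B\in\Psi^0_\Gamma$ the leading coefficient involves the fiberwise average of $\sigma_0(B)$ over $S^*_x\T^n$, so the integral should be over $S^*\T^n$ (with the appropriate normalization) to match $\Res_\Gamma(BQ)=\int_{S^*\T^n}\Tr_\cV(\sigma_0(B))\,d\nu$. Second, the Tauberian step you flag as the main obstacle is indeed where care is needed in the type~II setting; it is covered by the machinery in~\cite{BF} and~\cite{CRSS} (Dixmier traces via zeta/heat asymptotics for semifinite von Neumann algebras), so the route goes through, but it is precisely the analytic overhead that the paper's Riesz--plus--invariance argument was designed to bypass.
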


\begin{proof}
Note that $\Psi^{-n-1}(X, \cV)$ consists of
 $\Gamma$-trace class operators on which $\Tr^{\Gamma}_{\omega}$ vanishes. 
 Therefore $\Tr_{\omega}^{\Gamma}$ is a well-defined linear functional on the quotient space 
$\Psi^{-n}(X, \cV)/\Psi^{-n-1}(X, \cV)$, which is identified with the space of order $-n$ principal symbols $\sigma_P$ in $C^{\infty}(S^*X, \End\cV).$ 
Therefore, $\Tr^{\Gamma}_{\omega}(P)$ depends only on trace of the principal symbol of $P$, 
\[
(x, \xi)\mapsto \Tr_{\cV}[\sigma_P(x, \xi)]
\]
which is a continuous function on $S^*X$.
We denote this function by $f_P$. 
As explained before, 
\[
T: f_P\rightarrow \Tr^{\Gamma}_{\omega}(P)
\]
gives rise to a distribution on $C(S^*X)$.
Note that this distribution is positive.
 By the Riesz-Markov-Kakutani representation theorem on linear functionals on continuous functions, the distribution $T$ is given by a positive measure $\mu$ on $S^*X.$, i.e., $T(f_P)=\int f_P d\mu$ and that is:
\begin{equation}
\label{eq:Connes.trace}
\Tr_{\omega}^{\Gamma}(P)=\int_{S^*X}f_P(x, \xi)d\mu(x, \xi)=\int_{S^*X}\Tr_{\cV}(\sigma_P(x, \xi))d\mu(x, \xi).
\end{equation}

Because $\Tr_{\omega}^{\Gamma}$ is invariant under unitary transformation, the measure $\mu$ is invariant under isometry.
Because both $\Tr_{\omega}^{\Gamma}$ and $\Res_{\Gamma}$ can be reduced locally to an atlas trivialising $\cV$, it is sufficient to show the equality for any closed manifold with trivial $C^*_r(\Gamma)$-line bundle. 
Let $X=S^n.$ 
As explained in~\cite{AM}, the group of isometries $SO(n+1)$ on $\R^{n+1}$ induces a action on $S^*S^n$ as a homogeneous space and the volume form on $S^*S^n$ of the induced Riemannian metric is invariant under the action of $SO(n+1).$
Then uniqueness of invariant measure on $S^*S^n$ shows that the measure $\mu$ is proportional to  the volume form. 
Note that uniqueness of invariant measure on homogeneous spaces can be found in~\cite{N}.
Now the proposition is proved because the scaling constant is completely determined by the example of $\T^n.$ 
\end{proof}

\subsection{$\Z^n$-Dixmier trace for flat torus}

Let $X=(\R/\Z)^n$ be the $n$-torus and 
$D$ be the Dirac operator on $X$. $D$  has point spectrum only.
In particular, associated to
 each element $x$ in the integer lattice $\Z^n$
 is an eigenvector of the Laplacian $\Delta=D^2$ with eigenvalue $\|x\|^2.$
 We call $\Z^n$ the spectral lattice of $D^2$.
The pseudo differential operator $(D^2+1)^{-\frac{n}{2}}$ has  order $-n$ and is a compact operator on $L^2(M, S)$ where $S$ is the spinor bundle.  

 Let $D$ be the Dirac operator on $\T^n$ and $D_{\gamma}$ 
be its twist by the Mischenko-Fomenko line bundle over $\T^n.$ 
 We verify the following:
\begin{proposition}
\label{prop:eq.torus.Dix}
The equality holds:
\[
\Tr_{\omega}((1+D^2)^{-\frac{n}{2}})=\Tr_{\omega}^{\Gamma}((1+D_{\gamma}^2)^{-\frac{n}{2}}).
\]
\end{proposition}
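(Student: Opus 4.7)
The plan is to diagonalise both $D$ and $D_\gamma$ explicitly on the flat torus, reduce each side to the same Weyl-type asymptotic for the singular-value function, and then exploit that the Dixmier (resp.\ $\Gamma$-Dixmier) trace depends only on the leading coefficient of this asymptotic. First I invoke Pontryagin duality to identify $C^*_r(\Z^n)\cong C(\hat{\T}^n)$, where $\hat{\T}^n$ is the dual torus; this induces a direct-integral decomposition
\[
L^2(\T^n, S\otimes\gamma) \;\cong\; \int_{\hat{\T}^n}^{\oplus} L^2(\T^n, S\otimes L_\rho)\, d\rho,
\]
with $L_\rho$ the flat unitary line bundle attached to $\rho$. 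Under this identification $D_\gamma$ becomes the family $\{D_\rho\}_{\rho\in\hat{\T}^n}$, and the canonical trace $\tr$ on $C^*_r(\Z^n)$ corresponds to normalised Haar integration on $\hat{\T}^n$, so that for any bounded Borel function $f$,
\[
\tr_\Gamma(f(D_\gamma)) \;=\; \int_{\hat{\T}^n} \Tr\bigl(f(D_\rho)\bigr)\, d\rho.
\]

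Next I use the explicit spectrum of $D_\rho$ on the flat torus: its eigenvalues are $\{\pm 2\pi|k+\rho|\}_{k\in\Z^n}$ (up to a shift depending on the spin structure), each with spinor multiplicity $N=2^{\lfloor n/2\rfloor}$. Combined with the previous step, the $\Gamma$-trace of the spectral projection becomes
\[
\tr_\Gamma\bigl(\chi_{(s,\infty)}((1+D_\gamma^2)^{-n/2})\bigr) \;=\; N\!\int_{\hat{\T}^n}\!\#\bigl\{k\in\Z^n:1+4\pi^2|k+\rho|^2<s^{-2/n}\bigr\}\, d\rho.
\]
Unfolding the sum over $\Z^n$ against the integral over $\R^n/\Z^n$ (Fubini applied to the characteristic function of the relevant ball) collapses this to the single integral
\[
N\cdot\vol\bigl\{y\in\R^n:1+4\pi^2|y|^2<s^{-2/n}\bigr\}.
\]
For the untwisted operator $(1+D^2)^{-n/2}$ on $L^2(\T^n,S)$ the analogous counting function is directly $N\cdot\#\{k\in\Z^n:1+4\pi^2|k|^2<s^{-2/n}\}$, which by the Gauss lattice-point theorem is asymptotic to the same ball volume with subleading error.

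Finally, inverting the two counting functions gives a common singular-value asymptotic of the form $c_n/t$ as $t\to\infty$ with
\[
c_n \;=\; \frac{N\cdot\vol(B^n)}{(2\pi)^n},
\]
for both $\mu_t((1+D^2)^{-n/2})$ (computed with $\Tr$) and $\mu_t((1+D_\gamma^2)^{-n/2})$ (computed with $\tr_\Gamma$). Hence the logarithmic average $\omega\text{-}\lim_r\frac{1}{\log(1+r)}\int_0^r \mu_t\,dt$ yields $c_n$ on each side and the claimed equality follows. The main obstacle is bookkeeping on the lower-order terms: the Fubini swap in the unfolding is routine, but one must verify that the classical Gauss-circle lattice error on the untwisted side translates to a perturbation of the singular-value function of order $o(1/t)$ as $t\to\infty$, so that it is invisible under the $\omega$-limit. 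Once that error control is in place the two Dixmier traces coincide.
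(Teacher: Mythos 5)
Your proposal is correct and follows essentially the same route as the paper's proof: Pontryagin duality gives the direct-integral decomposition of the Hilbert module over the dual torus, the von Neumann trace becomes Haar integration over $\hat\T^n$, the twisted Dirac operators $D_\rho$ are gauge-conjugate to $D$ with spectrum coming from shifted lattice points $k+\rho$, Fubini unfolds the lattice sum against the $\rho$-integral into a Euclidean ball volume, and both singular-value functions have the same leading $c_n/t$ asymptotic, so both Dixmier traces equal $\Omega_n/n$. The only cosmetic difference is that you track the spinor multiplicity $N=2^{\lfloor n/2\rfloor}$ and the $2\pi$ normalisation explicitly and emphasise the error control under the Gauss lattice-count, whereas the paper phrases the untwisted side directly through Lemma~\ref{dim} and the ordered eigenvalue sequence; the substance is identical.
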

The statement remains true when replacing the operators by any order $-n$ pseudo differential operator obtained from functional calculus of $D, D_{\gamma}.$

 \begin{proof}
 Denote $A=(D^2+1)^{-\frac{n}{2}}.$
In~\cite{ConnesYM}, the Dixmier trace was computed by: 
\[
\Tr_{\omega}(A)=\lim_{V_r\to\infty}\frac{\sum_{\lambda\in\Z^n, \|\lambda\|\le r}(1+\|\lambda\|^2)^{-\frac{n}{2}}}{\log(V_r+1)}=\frac{\Omega_n}{n}=\frac{1}{n(2\pi)^n}\int_{S\T^n}\sigma_{-n}(A)
\]
which is known as Connes' trace formula,
where $V_r=\frac{\Omega_n}{n}r^n$ is the volume of ball of radius $r$ in $\R^n$
and  $\Omega_n$ is the area of the unit sphere $S^{n-1}.$

We recall the contents in Section~\ref{sec:DixmierGamma-trace}.
Consider a sequence of lattice points $x_0, \ldots, x_m, \ldots\in\Z^n$ 
with $\|x_i\|>\|x_j\|$ for all $i>j$, whose union satisfies the coincidence below as sets: 
\[
\{\|x_i\|: i=0, 1, \ldots\}=\{\|x\|: x\in \Z^n\}.
\]   
The operator $A$ has the set of eigenvalues: 
\[
\{(\|x\|^2+1)^{-\frac{n}{2}}: x\in \Z^n\}.
\]
Let $\lambda_1>\lambda_2>\cdots$ be the ordered set of eigenvalues. 
Then: \[
\lambda_i=(\|x_i\|^2+1)^{-\frac{n}{2}}.
\]
The dimension of the eigenspaces admits the formula:
\[
\sum_{j=0}^i\dim E_{\lambda_j}=\#\{x\in\Z^n: \|x\|\le\|x_i\|\}.
\]
Let us denote the right hand side by $N_{0, \|x_i\|}$, 
the number of lattice points in the ball centered at the origin with radius $\|x_i\|.$ 
Denote
the number of lattice points on the sphere centered at the origin with radius $\|x_i\|$
by:
\[
S_{0, \|x_i\|}:=N_{0, \|x_i\|}-N_{0, \|x_{i-1}\|}.
\]

Using lemma  \ref{dim}  we have the equality: 
\[
\mu_t(A)=(\|x_i\|^2+1)^{-\frac{n}{2}}
\]
where $N_{0, \|x_{i-1}\|}\le t < N_{\|x_i\|}$
and $\mu_n$ is the $(n+1)$-th largest eigenvalue counted with multiplicity. 

Therefore, the Dixmier trace of $A$ is calculated by: 
\begin{align*}
\Tr_{w}(A)=&\lim_{t\to\infty}\frac{1}{\log(t+1)}\int_0^t\mu_s(A)ds\\
=&\lim_{N\to\infty}\frac{1}{\log(N+1)}\sum_{n=0}^N\mu_n(A)\\
=&\lim_{i\to\infty}\frac{1}{\log(N_{0,\|x_i\|}+1)}\sum_{j=0}^i
(\|x_j\|^2+1)^{-\frac{n}{2}}S_{0, \|x_j\|} \\
=&\lim_{i\to\infty}\frac{1}{\log N_{0,\|x_i\|}}\sum_{j=0}^i
\|x_j\|^{-n} S_{0, \|x_j\|}\\
=&\frac{\Omega_n}{n} 
\end{align*}
and hence it coincides with 
$\frac{\Omega_n}{n} = \frac{2\pi^{n/2}}{n \Gamma(n/2)}.$

Denote by $\gamma$ the Mischenko-Fomenko line bundle and
 $\tilde S\otimes_{\Z^n}C^*_r(\Z^n)$ the $C^*$-algebra bundle obtained by twisting 
 $S\rightarrow\T^n$ by $\gamma$ over $\T^n$. 
 The Dirac operator  $D$ induces the action $D_{\gamma}$ 
  on the $L^2$-sections: 
\[
\cE=L^2(\T^n, \tilde S\otimes_{\Z^n}C^*_r(\Z^n))
\] 
which is a Hilbert module over $C^*_r(\Z^n).$ 
Under the identification: 
\[
C^*_r(\Z^n)\cong C(\hat \Z^n)=\int^{\oplus}_{\alpha\in\hat\Z^n}\C_{\alpha},
\]
$\cE$ admits a direct integral decomposition: 
\[
\cE=\int^{\oplus}_{\alpha\in\hat\Z^n}L^2(\T^n, \tilde S\otimes_{\Z^n}\C_{\alpha}).
\]
Under this decomposition $D_{\gamma}$ is identified with a family of Dirac operators $D_{\alpha}$ on $H_{\alpha}=L^2(\T^n, \tilde S\otimes_{\Gamma}\C_{\alpha})$ 
parametrised by $\alpha\in\hat\Z^n$:
\[
D_{\gamma}=\{D_{\alpha}\}_{\alpha\in \hat \Z^n}.
\]

$D_{\alpha}$  has point spectrum.
We claim that $D_{\alpha}$ has spectral lattice
which  differs by  shift of $i\alpha$ from that of $D =D_0$,
where $0$ is the trivial representation.
To see this, recall that $\alpha\in\hat \Z^n$ 
gives rise to a representation $\pi_{\alpha}: \Z^n\rightarrow \C^*$ given by: 
\[
\pi_{\alpha}(m)=e^{i\langle\alpha, m\rangle}\qquad m\in\Z^n.
\]
A smooth section of of $\R^n\times_{\Z^n}\C_{\alpha}$ is identified as a function on $\R^n$ with values in $\R^n\times\C_{\alpha}$, i.e., 
$f\in \Gamma(\R^n, \R^n\times\C_{\alpha})^{\Z^n}$ which is  $\Z^n$-invariant 
 in the  sense:
\[
f(x)=f(x+m)e^{i\langle \alpha, m\rangle}\qquad x\in \R^n. 
\]
Then $g(x):=e^{i\langle \alpha, x\rangle}f(x)$ is $\Z^n$-invariant: 
\[
g(x)=g(x+m)\qquad x\in\R^n, m\in \Z^n.
\]
So $g$ is a section of $\R^n\times_{\Z^n}\C_0$ and hence 
$e^{i\langle \alpha, x\rangle}D_{\alpha}f=D g=De^{i\langle\alpha, x\rangle}f.$
This follows from the fact that $D_0$ and $D_{\alpha}$ are viewed as the same operator after identifying these spaces they act on by the unitary operator: 
\[
e^{i\langle\alpha, x\rangle}: L^2(\T^n, \R^n\times_{\Z^n}\C_{\alpha})\rightarrow L^2(\T^n, \R^n\times_{\Z^n}\C_{0}).
\]
Thus, 
\[
D_{\alpha}=e^{-i\langle\alpha, x\rangle}De^{i\langle\alpha, x\rangle}=D+i\alpha.
\]
The claim is proved.

Let $A=(1+D_{\gamma}^2)^{-\frac{n}{2}}
\in\cK(\cE)$ be the positive compact endomorphism
 on the  
Hilbert $C^*_r(\Z)$-module. It is represented by: 
\[
h=\{P_{\alpha}\}_{\alpha\in\hat\Z^n} 
\]
where $ P_{\alpha}=(D_{\alpha}^2+1)^{-\frac{n}{2}}$
is a family of compact operators. 
Denote by $E_n$ the $1$-dimensional eigenspace of $D$ associated to $n\in\Z^n.$
Then $P_{\alpha}$ admits spectral decomposition: 
\[
P_{\alpha}=\oplus_{n\in\Z^n}(||n+i\alpha||^2+1)^{-\frac{n}{2}}E_n.
\]
For $s\ge0$, we have:
\[
\chi_{(s,\infty)}(A)=\int_{\alpha\in\hat\Z^n}\chi_{(s, \infty)}(P_{\alpha})d\alpha. 
\]
Note that the integrant $\chi_{(s, \infty)}(P_{\alpha})$ is a finite rank projection, whose operator trace is equal to: 
\[
\Tr(\chi_{(s, \infty)}(P_{\alpha}))=\#\{x\in\Z^n:(||x+i\alpha||^2+1)^{-\frac{n}{2}}>s\}
\]

The von Neumann trace $\tr$ on the dual under Fourier transform is to integrate over $\alpha\in\hat\Z^n:$
\[
\tr(\chi_{(s, \infty)}(A))=\int_{\alpha\in\hat\Z^n}\Tr(\chi_{(s, \infty)}(P_{\alpha}))d\alpha.
\]

By definition: 
\begin{align*}
\mu_t(A)=&\inf\left\{s\ge0: \int_{\alpha\in\hat\Z^n}\#\{m\in\Z^n:(||m+i\alpha||^2+1)^{-\frac{n}{2}}>s\}d\alpha \leq t\right\}\\
=&\inf\left\{(\|x\|^2+1)^{-\frac{n}{2}}: \int_{\alpha\in\hat\Z^n}\#\{m\in\Z^n: \|m+i\alpha\|\le\|x\|\}d\alpha\leq t\right\}.
\end{align*}

Denote the integrant 
$\#\{m\in\Z^n: \|m+i\alpha\|\le\|x\|\}$ by $N_{\alpha, \|x\|}$.
Observe that: 
\[
\int_{\alpha\in\hat\Z^n}N_{\alpha, \|x\|}d\alpha\sim \Vol B_{\|x\|}
\]
as $||x|| \to \infty$,
where $B_{\|x\|}$ is volume of the ball of radius $\|x\|$ in $\R^n.$

Thus we obtain: 
\[
\mu_t(A)=\inf\left\{(\|x\|^2+1)^{-\frac{n}{2}}: \Vol B_{\|x\|}\leq t\right\}.
\]
Because $\Vol B_{\|x\|}=\frac{\Omega_n}{n}\|x\|^n$, and 
$(\|x\|^2+1)^{-\frac{n}{2}}\sim \|x\|^{-n}$
when $x$ is large, we have  
\[
\mu_t(A)\sim \frac{1}{t}\frac{\Omega_n}{n}
\]
when $t$ is large.
The Dixmier $\Gamma$-trace is then equal to: 
\[
\Tr_{\omega}^{\Gamma}(A)=\lim_{t\to\infty}\frac{1}{\log(t+1)}\int_1^t\mu_s(A)=\frac{\Omega_n}{n}\lim_{t\to\infty}\frac{\log(t)}{\log(t+1)}=\frac{\Omega_n}{n}.
\]
\end{proof}


\end{document}